\providecommand{\argmin}{\operatornamewithlimits{argmin}} % argmin
\providecommand{\limsup}{\operatornamewithlimits{limsup}} % limsup
\providecommand{\liminf}{\operatornamewithlimits{liminf}} % liminf
\DeclareMathOperator{\Tr}{Tr}     % trace
\DeclareMathOperator{\Var}{Var}   % variance
\DeclareMathOperator{\Cond}{Cond} % condition number
\DeclareMathOperator{\diag}{diag} % diagonal
\providecommand{\N}{\mathbb{N}} % integer
\providecommand{\R}{\mathbb{R}} % real field
\providecommand{\E}{\mathbb{E}} % expectation
\providecommand{\T}{\mathrm{T}} % transpose
\providecommand{\ind}[1]{\ensuremath{\mathbbm{1}_{\left\{#1\right\}}}} % indicator function
\renewcommand{\geq}{\geqslant} % geq
\renewcommand{\leq}{\leqslant} % leq
\DeclarePairedDelimiterX{\inner}[2]{\langle}{\rangle}{#1, #2}
\DeclarePairedDelimiter{\norm}{\lVert}{\rVert}
\DeclarePairedDelimiter{\abs}{\lvert}{\rvert}
\newtheorem{theorem}{Theorem}[]
\newtheorem{proposition}[theorem]{Proposition}
\newtheorem{corollary}[theorem]{Corollary}
\newtheorem{lemma}[theorem]{Lemma}
\theoremstyle{definition}
\newtheorem{definition}[]{Definition}
\newtheorem{assumption}[]{Assumption}
\newcommand{\markupdraft}[2]{% {#1: {color|display} command}{#2: desired color or text}
%  the next lines can be incommented, if respectively certain notes or coloring should disappear
\ifthenelse{\equal{#1}{display}}{#2}{}%                 % display only in draft version
\ifthenelse{\equal{#1}{color}}{\color{#2}}{}%           % colored only in draft (for \new command)
}
\newcommand{\newcolored}[3][]{{\markupdraft{color}{#2}#3}%  % kept in the final print
\ifthenelse{\equal{#1}{}}{}{\markupdraft{display}{{\color{yellow!70!black}[#1]}}}}
\newcommand{\del}[2][]{{\markupdraft{display}{{\color{orange}[removed: ``#2''[#1]]}}}} % (to be) removed
\renewcommand{\del}[2]{}  % make removed sentences invisible
\newcommand{\indup}{\mathbb{I}_{\uparrow}}
\newcommand{\inddown}{\mathbb{I}_{\downarrow}}
\newcommand{\inds}{\mathbb{I}_{s}}
\newcommand{\indl}{\mathbb{I}_{\ell}}
\newcommand{\aup}{\alpha_{\uparrow}}
\newcommand{\adown}{\alpha_{\downarrow}}
\begin{document}

\title{
Convergence Rate of the (1+1)-ES on Locally Strongly Convex and Lipschitz Smooth Functions
%and their Monotonic Transformations
}%

\author{
Daiki Morinaga,
\thanks{D. Morinaga is with the Department of Computer Science, University of Tsukuba, Tsukuba, Japan; RIKEN AIP, Japan (e-mail: morinaga@bbo.cs.tsukuba.ac.jp).}
\and Kazuto Fukuchi,
\and Jun Sakuma,
\and Youhei Akimoto\thanks{
K. Fukuchi, J. Sakuma, and Y. Akimoto are with the Institute of Systems and Information Engineering, University of Tsukuba, Tsukuba, Japan; RIKEN AIP, Japan
(e-mail: fukuchi@cs.tsukuba.ac.jp; jun@cs.tsukuba.ac.jp; akimoto@cs.tsukuba.ac.jp).}
}

% The paper headers
%\markboth{Journal of \LaTeX\ Class Files,~Vol.~14, No.~8, August~2021}%
%{Shell \MakeLowercase{\textit{et al.}}: A Sample Article Using IEEEtran.cls for IEEE Journals}

%\IEEEpubid{0000--0000/00\$00.00~\copyright~2021 IEEE}
% Remember, if you use this you must call \IEEEpubidadjcol in the second
% column for its text to clear the IEEEpubid mark.

\thispagestyle{empty}
\noindent\textbf{Bibliographic information:}
{\small
\begin{verbatim}
@ARTICLE{morinaga2024ieeetevc,
  author={Morinaga, Daiki and Fukuchi, Kazuto and Sakuma, Jun and Akimoto, Youhei},
  journal={IEEE Transactions on Evolutionary Computation}, 
  title={Convergence Rate of the (1+1)-ES on Locally Strongly Convex and Lipschitz Smooth Functions}, 
  year={2024},
  volume={28},
  number={2},
  pages={501-515},
  doi={10.1109/TEVC.2023.3266955}}
\end{verbatim}
}

\noindent\textbf{Erratum:}
The version of this paper published at IEEE TEVC (2024) contains some technical errors. The errors do not affect the correctness
of the theorems. They are corrected in this version. For clarity, the changes are marked in {\color{blue}
blue}.

\maketitle
\setcounter{page}{1}
\begin{abstract}
Evolution strategy (ES) is one of the promising classes of algorithms for black-box continuous optimization.
Despite its broad successes in applications, 
theoretical analysis on the speed of its convergence is limited on convex quadratic functions and their monotonic transformation.%theoretically how fast it converges to a optima on convex functions is still vague.
In this study, an upper bound and a lower bound of the rate of linear convergence of the (1+1)-ES on locally $L$-strongly convex functions with $U$-Lipschitz continuous gradient are derived as $\exp\left(-\Omega_{d\to\infty}\left(\frac{L}{d\cdot U}\right)\right)$ and $\exp\left(-\frac1d\right)$, respectively.
Notably, any prior knowledge on the mathematical properties of the objective function, such as Lipschitz constant, is not given to the algorithm, whereas the existing analyses of derivative-free optimization algorithms require it.
\end{abstract}

\begin{IEEEkeywords}
Derivative-free optimization, black-box optimization, evolution strategy, linear convergence, convergence rate, convex optimization.
\end{IEEEkeywords}

\sloppy

\section{Introduction}
\label{intro}
\subsection{Background}

Black-box optimization (BBO) is one of the classes of optimization problem settings in which only the query of the objective function value $x \mapsto f(x)$ is available.
As the computation performance is rapidly developing, practical applications of BBO problems, such as simulation-based optimization, emerge in extensive fields, that is, geoscience, biology, topology optimization, and machine learning \cite{dong2019efficient, fujii2018cma, kriest2017calibrating, uhlendorf2012long}.

Evolution strategy (ES) is regarded as a promising class of continuous BBO algorithms,
and particularly, variants of the covariance matrix adaptation evolution strategy (CMA-ES)\cite{hansen2014principled,hansen2003reducing,hansen2001completely} are in particular demonstrated to achieve prominent performance in benchmark problems and applications \cite{dong2019efficient, fujii2018cma, kriest2017calibrating, uhlendorf2012long, varelas2018comparative}.
While the practical success of ES has been widely recognized, its theoretical foundation is not significantly robust.

The oldest variant of ES is the (1+1)-ES, which was first proposed in \cite{rechenberg1973evolution} and sophisticated in \cite{kern2004learning}.
Although the mechanism of the (1+1)-ES is simpler than that of the state-of-the-art CMA-ES, both share the essence of algorithms (e.g., the randomness of the sampling of the candidate solution and the adaptation of internal parameters), which makes mathematical analysis considerably complicated.
Therefore, one of the goals of this study is to develop a mathematical technique to analyze the class of algorithms with those properties through the analysis of the (1+1)-ES.

\emph{Linear convergence} is a class of convergence and approximately refers to the process whereby, given an objective function, the algorithm finds an $\epsilon$-neighborhood of an optimum in {$\Theta\left(\log\left(1/\epsilon\right)\right)$} number of $f$-calls.
%,although a precise definition of the class of convergence of non-deterministic one can differ according to what the analysis is focusing on.
As the previous works show that the sequences generated by the (1+1)-ES behave in a manner that can be regarded as the linear convergence \cite{akimoto2018drift,akimoto2020global,auger2013linear,jagerskupper2003analysis,jagerskupper20061+,jagerskupper2007algorithmic,morinaga2019generalized},
this study focuses on the rate of linear convergence of the (1+1)-ES.

% While qualitative result, linear convergence, on such function is atained, the quantitative convergence analysis of the (1+1)-ES, and BBO algorithm furthermore, on this class of function is still not.
% The current study pursue quantitative analysis of \emph{convergence rate}, that is, investigate how fast the algorithm converges in the view of constants which parameterize mathematical features of the function, such as the Lipschitz constant.

% The mathematical properties of the objective function we focuses on are strong convexity and Lipschitz smoothness.
% The functions with these two properties are often studied broadly in the field of convex optimization\todo{citation}, and regarding the (1+1)-ES, linear convergence on the class of function which includes the strongly convex and Lipschitz smooth functions has been proved\cite{morinaga2019generalized}.

% Last but not least in this section is about derivative-free optimization (DFO).
% DFO is a similar setting to BBO in that any derivative of the objective function is not available, and there exist theoretical studies of DFO algorithms which proves linear convergence\todo{citation}.
% However, as discussed in the next section, DFO settings often assumes that mathematical properties of the objective function are explicitly available anlike the BBO setting.

% \mori{
% Strong convexity and Lipschitz smoothness is important because...
% \begin{itemize}
%     \item many previous theoretical works study it.
%     \item it parameterizes the degree of convexity.
%     \item other classes of convex functions are...
% \end{itemize}
% }

\subsection{Related Work}

% Existing theoretical researchs on convergence of the (1+1)-ES are first presented to see how far the result in this paper goes in the view of the same algorithm.
% Then, researchs on convergence of derivative-free optimization (DFO) algorithms on strongly convex and Lipschitz smooth functions follow, to clarify the contribution of this paper among DFO studies.

\paragraph{Convergence analysis of the (1+1)-ES}

We analyze the (1+1)-ES with success-based step-size adaptation, which slightly generalizes the 1/5-success rule proposed in \cite{kern2004learning}. The same variant is analyzed in  \cite{akimoto2018drift,auger2013linear,morinaga2019generalized,glasmachers2020global}, and a further generalized variant with a covariance matrix adaptation is analyzed in \cite{morinaga2021convergence}. 
A slightly different variant is analyzed in \cite{jagerskupper2003analysis, jagerskupper20061+, jagerskupper2007algorithmic}.

On the spherical functions $x\in\R^d\mapsto g(\norm{x}^2)$, where $g:\R\to\R$ is an arbitrary strictly increasing transformation, Akimoto et al.~\cite{akimoto2018drift} derive an expected number of function evaluations for the (1+1)-ES to find an $\epsilon$-neighborhood of the optimum, namely the \emph{first hitting time} (FHT), which is in $\Theta\left(d\cdot\log(\norm{m_0 - x^*}/\epsilon)\right)$, where $m_0\in\R^d$ is an arbitrary initial point of the (1+1)-ES and $x^*$ is the optimum. Prior to \cite{akimoto2018drift}, J\"{a}gersk\"{u}pper~\cite{jagerskupper2003analysis,jagerskupper2007algorithmic} also analyze a slightly different variant on the spherical functions.
%In \cite{akimoto2018drift}, they propose a novel \emph{potential function} for unbounded continuous domain which estimates optimation progress, and derive the results with a \emph{additive drift analysis} approach.

On a subset of convex quadratic functions that can be written as
$f(x_1,\ldots,x_d) = \xi\sum_{i=1}^{d/2} x_i^2 + \sum_{j=d/2+1}^d x_j^2$, where $1/\xi\to 0$ as $d\to\infty$,
J\"{a}gersk\"{u}pper~\cite{jagerskupper20061+} show that the number of function evaluations to halve the function value is in $\Theta(d \cdot \xi)$ with an overwhelming probability.
%Loosely speaking, this result translates to the convergence rate of $\Theta\left(\exp\left(-\frac{1}{d \cdot \xi}\right)\right)$.

On the general convex quadratic functions $x\mapsto g(x^\mathrm{T} H x)$ where { $H\in\R^{d\times d}$ is symmetric positive-definite and is the Hessian matrix if $g(y)=y/2$}, Morinaga et al.~\cite{morinaga2021convergence} derive an upper bound of a convergence rate of the worst case in $O\left(\exp\left(-L/\Tr(H)\right)\right)$ ($L$ is the smallest eigenvalue of $H$ and $\Tr(\cdot)$ is the trace), and also derive a lower bound of the best case in $\Omega\left(\exp\left(-1/d\right)\right)$. 
The current study is a pure expansion of \cite{morinaga2021convergence} to strongly convex and Lipschitz smooth functions, including the general convex quadratic functions.\footnote{An example of strongly convex and Lipschitz smooth functions except for a convex quadratic function is $f(x) = g(x) + h(x)$, where $g(x)$ is a convex quadratic function with a Hessian matrix whose eigenvalues are bounded in $[L+M, U-M]$ and $h(x)$ is a twice continuously differentiable function with a Hessian matrix whose eigenvalues are bounded in $[-M, M]$.}
Note that loosely speaking, if the order of the necessary number of function evaluations to converge is represented as $\Theta(\psi(N))$ for a function $\psi$ and a parameter of interest $N$, it suggests that the convergence rate is in $\Theta\left(\exp(-1/\psi(N))\right)$.

On the \emph{homogeneous} functions with all the shapes of levelsets being similar, Auger and Hansen~\cite{auger2013linear} show that the class of convergence is in \emph{linear convergence}. 

%Linear convegence approximately refers that given a objective function, the number of function evaluations necessary to halve a residual error is at least constant over an optimization process, while it is under the stochastic nature of the algorithm.

On a class of functions that includes the strongly convex and Lipschitz smooth functions and a portion of non-convex functions, Morinaga and Akimoto~\cite{morinaga2019generalized} derive a constant upper bound of the FHT and prove linear convergence.
%They modify the potential function proposed in~\cite{akimoto2018drift} and also explots the additive drift analysis approach.
This result is extended by Y. Akimoto et al.~\cite{akimoto2020global} to the (1+1)-ES with the adaptation of a bounded covariance matrix.
{The dependency of the FHT or the convergence rate on $d$, $L$, and $U$ is not discussed in \cite{akimoto2020global} and \cite{morinaga2019generalized}}.
    
    Golovins et al.\cite{2019gradientlesgolovins} show that on a $L$-strongly convex and $U$-Lipschitz smooth function $f$, their proposed DFO method GLD-Fast finds a solution $x$ such that $f(x) - f(x^*)\leq \epsilon$ after a number of function evaluations in $O\left(\frac{d\cdot U}{L}\cdot\log^2\left(\frac{U}{L}\right)\cdot\log\left(\frac{R}{\epsilon}\right)\right)$ with high probability.
    
    These theoretical analyses assume that the algorithm can explicitly exploit parameters of the functions, such as the Lipschitz constant $U$, while they guarantee that the class of convergence of their algorithms is in linear convergence and also offer the runtime order with dependencies of the function parameters $L$ and $U$.
    As the knowledge of such mathematical properties of the objective function cannot be expected in every case of practice when the derivative is not available,
    the current study focuses on the genuine black-box optimization algorithm, which assumes the situation that only the query of the function is available.

\subsection{Contributions}

We prove the upper and lower bound of the convergence rate of the (1+1)-ES with success-based step-size adaptation on functions that can be expressed as a composite of a strictly increasing function and a locally $L$-strongly convex function with $U$-Lipschitz continuous gradients. 
Informally, the result is stated as follows: For a sufficiently large dimension $d$, the sequence of the candidate solutions, $\{m_t\}_{t \geq 0}$, generated by the (1+1)-ES satisfies with probability one.
\begin{multline}
    \frac{1}{d} 
    \geq - \liminf_{t\to\infty} \frac{1}{t}\log\left(\frac{\norm{m_t - x^\mathrm{opt}}}{\norm{m_0 - x^\mathrm{opt}}}\right) 
    \\
    \geq - \limsup_{t\to\infty} \frac{1}{t}\log\left(\frac{\norm{m_t - x^\mathrm{opt}}}{\norm{m_0 - x^\mathrm{opt}}}\right) 
    \in \Omega\left(\frac{L}{d\cdot U}\right). 
\end{multline}
Namely, the candidate solution converges linearly (i.e., geometrically) toward $x^\mathrm{opt}$ with convergence rate (i.e., the factor of decrease of $\norm{m_t - x^\mathrm{opt}}$ in each step) no smaller than $\exp\left(-\frac{1}{d}\right)$ and no greater than $\exp\left(-\Omega\left(\frac{L}{d\cdot U}\right)\right)$. The upper bound is refined to $\exp\left(-\Omega\left(\frac{L}{\Tr(H)}\right)\right)$ if the objective function is a composite of a strictly increasing function and convex quadratic function $\frac{1}{2} x^\mathrm{T} H x$ with Hessian matrix $H$ whose eigenvalues are bounded in $[L, U]$. To the best of our knowledge, this is the first analysis to explicitly derive the convergence rate of the BBO algorithm on $L$-strongly convex function with $U$-Lipschitz continuous gradients. These results are formally stated in \Cref{thm:lowerconvergencerate,theorem:lower,cor:upper}. We evaluate the tightness of the derived bounds in numerical experiments on convex quadratic problems in \Cref{sec:exp} and we empirically observe for a large $T$
\begin{multline}
    \min\left\{ 0.1 \cdot \frac{1}{d} , 10 \cdot \frac{L}{\Tr(H)} \right\}
    \\
    \gtrapprox - \frac{1}{0.1 \cdot T}\log\left(\frac{\norm{m_{T} - x^\mathrm{opt}}}{\norm{m_{0.9 \cdot T} - x^\mathrm{opt}}}\right) 
    \gtrapprox 0.1 \cdot \frac{L}{\Tr(H)} 
\end{multline}
on three types of $H$ with varying $d \in [1, 10000]$ and varying condition number $\frac{U}{L} \in [1, 10^6]$. 

\section{Formulation}

In this section, we introduce the algorithm to be analyzed and the class of objective functions to be analyzed.
The definitions of the linear convergence and the convergence rate are formally stated.

% \subsection{Notation}
% \yohe{I think they don't need to be introduced here. Just explain them when they first appears.}
% \begin{itemize}
%     \item $\ind{\cdot}$ : the indicator function. 
%     \item $\norm{\cdot}$ : the Euclidean norm 
%     \item $\inner{\cdot}{\cdot}$ : the inner product
%     \item $\mathcal{N}(0, I)$ : the standard normal distribution 
%     \item $\Phi$ : the cumulative density function induced by $\mathcal{N}(0, I)$
% \end{itemize}

% % \begin{wrapfigure}[]{R}{0.45\textwidth}
% % \begin{minipage}{0.45\textwidth}
\begin{algorithm}[H]
\caption{(1+1)-ES with success-based $\sigma$-adaptation}
\label{algo}
\begin{algorithmic}[1]
\State \textbf{input} $m_0 \in \mathbb{R}^d$, $\sigma_0 > 0$, $f: \R^d \to \R$
\State \textbf{parameter} $\aup > 1$, $\adown < 1$
\For {$t = 0,1,\dots$}%\textit{until a stopping criterion is met}}
\State $x_t = m_t + \sigma_t \cdot z_t$ \text{where} $z_t \sim \mathcal{N}(0,  I)$
\If {$f\big(x_t\big) \leq f\big(m_t\big)$}
\State $m_{t+1} \leftarrow x_t$
\State $\sigma_{t+1} \leftarrow \sigma_t \cdot \aup$
\Else
\State $m_{t+1} \leftarrow m_t$
\State $\sigma_{t+1} \leftarrow \sigma_t \cdot \adown$
\EndIf
\EndFor
\end{algorithmic}
\end{algorithm}
% % \end{minipage}
% % \end{wrapfigure}
% \begin{algorithm}[H]\color{blue}
% \caption{(1+1)-ES with success-based $\sigma$-adaptation}
% \label{algo}
% \begin{algorithmic}[1]
% \State \textbf{input} $m_0 \in \mathbb{R}^d$, $\sigma_0 > 0$, $f: \R^d \to \R$
% \State \textbf{parameter} $\aup > 1$, $\adown < 1$
% \For {$t = 0,1,\dots$}%\textit{until a stopping criterion is met}}
% \State $z_t \sim \mathcal{N}(0,  I)$, $x_t = m_t + \sigma_t \cdot z_t$
% \If {$f\big(x_t\big) \leq f\big(m_t\big)$}
% \State $m_{t+1} \leftarrow x_t$, $\sigma_{t+1} \leftarrow \sigma_t \cdot \aup$
% \Else
% \State $m_{t+1} \leftarrow m_t$, $\sigma_{t+1} \leftarrow \sigma_t \cdot \adown$
% \EndIf
% \EndFor
% \end{algorithmic}
% \end{algorithm}

\subsection{Algorithm}

We analyze the (1+1)-ES with success-based step-size adaptation \cite{kern2004learning} presented in \Cref{algo}. 
Given the initial search point, $m_0 \in \R^d$, and the initial step-size, $\sigma_0 > 0$, it repeats the following steps until a stopping criterion is satisfied. At each iteration $t \geq 0$, it samples a candidate solution, $x_t \in \R^d$, by adding to the current search point, $m_t$, a Gaussian noise $\sigma_t \cdot z_t$ with standard deviation $\sigma_t$. Here, $z_t \sim \mathcal{N}(0, I)$ indicates that $z$ is a $d$-dimensional random vector drawn from the standard normal distribution. If the candidate solution has a better or equally well objective function value than the current search point (i.e., $f(x_t) \leq f(m_t)$) the search point is updated by the candidate solution, and the step-size is increased by multiplying with $\aup > 1$. Otherwise, the current search point is taken over, and the step-size is decreased by multiplying with $\adown < 1$.

Adaptation of $\sigma$ is designed to maintain the probability of sampling a better or equally good candidate solution than the current search point, $\Pr_{z_t}[f(x_t) \leq f(m_t)]$, to the target probability $p_\mathrm{target}$ defined as
\begin{equation}
p_\mathrm{target} := \frac{\log(1/\adown)}{\log(\aup / \adown)} 
\label{eq:p_target-def}
.
\end{equation}
The factors of the step-size change, $\aup$ and $\adown$, are the hyper-parameters of this algorithm but not intended to be tuned for each problem. A typical choice is $\adown = \aup^{-1/4}$, so that $p_\mathrm{target} = 1/5$; hence, this approach is originally proposed as \emph{1/5-success rule} \cite{rechenberg1973evolution,kern2004learning}. The increase factor, $\aup$, is typically set to either $\exp(c)$, $\exp(c/\sqrt{d})$, or $\exp(c/d)$, where $c > 0$ is a constant.

Optimization with \Cref{algo} is formulated mathematically as the sequence of the parameters, $\{m_t\}_{t\geq 0}$ and $\{\sigma_t\}_{t\geq 0}$, and the sequence of the random vectors, $\{z_t\}_{t\geq 0}$.
Subsequently, $\ind{A}$ is the indicator function of an event $A$ (i.e., $\ind{A} = 1$ if $A$ occurs) and otherwise $\ind{A} = 0$.
\begin{definition}[Markov chain of the (1+1)-ES]\label{def:algorithm}
Let $\Theta = \R^{d+1}$ be the state space, and a state of \Cref{algo} at iteration $t$ is defined as $\theta_t = (m_t, \log(\sigma_t))$.
Let $\{z_t\}_{t \geq 0}$ be the sequence of independent and $\mathcal{N}(0, I)$-distributed random vectors.
For a measurable function $f : \R^d \to \R$, let $\theta_0 = (m_0, \log(\sigma_0))$ and $\theta_{t+1} = \theta_{t} + \mathcal{G}(\theta_t, z_t; f)$, where
\begin{equation}
\begin{split}
\mathcal{G}(\theta, z; f) := (\sigma \cdot z, \log(\aup)) \cdot &\ind{f(m + \sigma \cdot z) \leq f(m)}\\
+ (0, \log(\adown)) \cdot &\ind{f(m + \sigma \cdot z) > f(m)} .
\end{split}
\end{equation}
Let $\{\mathcal{F}_t\}_{t \geq 0}$ be the natural filtration of $\{\theta_t\}_{t \geq 0}$.
We write
\begin{equation}
\{(\theta_t, \mathcal{F}_{t})\}_{t\geq 0} = \texttt{ES}(f, \theta_0, \{z_t\}_{t \geq 0}) .
\end{equation}
\end{definition}

Because of the properties of the (1+1)-ES, we can generalize the analysis on a relatively simple convex objective function to a function class including non-convex functions.
First, the (1+1)-ES is invariant against an arbitrary strictly increasing transformation of the objective function because it is comparison-based, indicating that it requires only comparing two objective function values and the function values themselves are not used. 
Second, it is invariant against the parallel translations of the coordinate system of the search space. This allows us to assume that the optimum is located at the origin without loss of generality.
Third, because the (1+1)-ES is an elitist strategy (i.e., $f(m_{t+1}) \leq f(m_t)$ for all $t \geq 0$) the function landscape outside $S(m_0) = \{x \in \R^d : f(x) \leq f(m_0)\}$ does not matter. 
Therefore, we can assume a simple structure outside $S(m_0)$ for convenience.
These properties are formally stated in the following proposition. 
Its proof is provided in \Cref{apdx:prop:invariance}.

\begin{proposition}[Properties of the (1+1)-ES]\label{prop:invariance}
Given $\{z_t\}_{t \geq 0}$ and $\theta_0 \in \Theta$, the following hold.

(1) Let $g:\R \to \R$ be an arbitrary strictly increasing function (i.e., $g(x) < g(y) \Leftrightarrow x < y$). Subsequently, $\tilde{\theta}_t = \theta_t$ for all $t \geq 0$ for $\{(\theta_t, \mathcal{F}_{t})\}_{t\geq 0} = \texttt{ES}(f, \theta_0, \{z_t\}_{t \geq 0})$ and $\{(\tilde{\theta}_t, \tilde{\mathcal{F}}_{t})\}_{t\geq 0} = \texttt{ES}(g \circ f, \theta_0, \{z_t\}_{t \geq 0})$.

(2) Let $T: x \mapsto x - x^*$ for an arbitrary $x^* \in \R$ and define $S_T: (m, \log(\sigma)) \to (T^{-1}(m), \log(\sigma))$. Subsequently, $\tilde{\theta}_t = S_T(\theta_t)$ for all $t \geq 0$,
for $\{(\theta_t, \mathcal{F}_{t})\}_{t\geq 0} = \texttt{ES}(f, \theta_0, \{z_t\}_{t \geq 0})$ and $\{(\tilde{\theta}_t, \tilde{\mathcal{F}}_{t})\}_{t\geq 0} = \texttt{ES}(f \circ T, S_T(\theta_0), \{z_t\}_{t \geq 0})$. 

(3) Let $\tilde{f}:\R^d\to\R$ be a function such that its restriction to $S(m_0) = \{x \in \R^d : f(x) \leq f(m_0)\}$ (i.e., $\tilde{f}\rvert_{S(m_0)}$) is equivalent to $f\rvert_{S(m_0)}$ and $\tilde{f}(x) > f(m_0)$ for all $x \in \R^d\setminus S(m_0)$. Subsequently, $\tilde{\theta}_t = \theta_t$ for all $t \geq 0$ for $\{(\theta_t, \mathcal{F}_{t})\}_{t\geq 0} = \texttt{ES}(f, \theta_0, \{z_t\}_{t \geq 0})$ and $\{(\tilde{\theta}_t, \tilde{\mathcal{F}}_{t})\}_{t\geq 0} = \texttt{ES}(\tilde{f}, \theta_0, \{z_t\}_{t \geq 0})$.
\end{proposition}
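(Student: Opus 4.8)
The plan is to exploit the fact that the one-step increment $\mathcal{G}(\theta, z; f)$ depends on the objective $f$ only through the pair of indicators $\ind{f(m + \sigma z) \leq f(m)}$ and its complement; the two numerical values $f(m+\sigma z)$ and $f(m)$ enter nowhere else. Consequently, each of the three claims reduces to showing that the relevant comparison events coincide for the two objectives being compared, after which a routine induction on $t$ (with the common noise sequence $\{z_t\}$ held fixed and the recursion $\theta_{t+1} = \theta_t + \mathcal{G}(\theta_t, z_t; \cdot)$) propagates the identity of the states. I would isolate this observation once and then instantiate it three times.

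For part (1), since $g$ is strictly increasing we have $g(f(m+\sigma z)) \leq g(f(m)) \iff f(m+\sigma z) \leq f(m)$, so $\mathcal{G}(\theta, z; g\circ f) = \mathcal{G}(\theta, z; f)$ for every $\theta$ and $z$. Starting from the shared $\theta_0$, the induction immediately yields $\tilde{\theta}_t = \theta_t$ for all $t$.

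For part (2), I would carry the claim $\tilde{\theta}_t = S_T(\theta_t)$ as the induction hypothesis. The key point is that the comparison driving the $f\circ T$ run at the translated state evaluates $(f\circ T)(\tilde{m}_t + \sigma_t z_t) \leq (f\circ T)(\tilde{m}_t)$, and applying $T$ turns $\tilde{m}_t = T^{-1}(m_t)$ into $m_t$ and the candidate into $m_t + \sigma_t z_t$, so this event is identical to the one in the $f$ run. Because the mean-increment $\sigma z$ is a displacement that is unaffected by the translation, the matching indicators make both coordinates of $\mathcal{G}$ transform consistently, closing the induction.

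Part (3) is where the real work lies, because the candidate $x_t = m_t + \sigma_t z_t$ can leave $S(m_0)$ even when the iterate does not. I would strengthen the induction hypothesis to the conjunction $\theta_t = \tilde{\theta}_t$ together with $m_t \in S(m_0)$, the latter being available from elitism, as $f(m_{t+1}) \leq f(m_t) \leq f(m_0)$. Given the hypothesis, $\tilde{f}(m_t) = f(m_t)$ since $m_t \in S(m_0)$, and I would split on whether $x_t \in S(m_0)$. If $x_t \in S(m_0)$, then $\tilde{f}(x_t) = f(x_t)$ and the comparisons coincide; if $x_t \notin S(m_0)$, then $f(x_t) > f(m_0) \geq f(m_t)$ and likewise $\tilde{f}(x_t) > f(m_0) \geq \tilde{f}(m_t)$, so both comparisons are failures. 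Either way the indicators agree, so $\theta_{t+1} = \tilde{\theta}_{t+1}$; and $m_{t+1} \in S(m_0)$ follows because a success forces $x_t \in S(m_0)$ while a failure keeps $m_{t+1} = m_t$. The main obstacle is precisely this case split: one must confirm that a point pushed outside the initial sublevel set is rejected identically under $f$ and under $\tilde{f}$, which is exactly what the hypothesis $\tilde{f} > f(m_0)$ off $S(m_0)$ guarantees.
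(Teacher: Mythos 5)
Your proposal is correct and follows essentially the same route as the paper: reduce each claim to the agreement of the comparison indicators (and hence of the one-step map $\mathcal{G}$) under the two objectives, then propagate state equality by induction on $t$ with the common noise sequence. Your part (3) is, if anything, slightly more careful than the paper's version, since you make explicit that $m_t \in S(m_0)$ must be carried as part of the induction hypothesis alongside $\theta_t = \tilde{\theta}_t$, whereas the paper invokes elitism for this without spelling out the induction; the substance is identical.
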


\subsection{Convergence Rate}

%In this section, a property of convergence to be analyzed is defined, and an analysis method is then introduced.

A deterministic real-valued sequence $\{X_t\}_{t\geq 0}$ is said to convergence linearly with convergence rate $r \in (0, 1)$ if
$\lim_{t\to\infty} \abs*{X_{t+1} - X^\mathrm{opt}}/\abs*{X_t - X^\mathrm{opt}} = r$. A slightly more applicable definition is $\lim_{t\to\infty} \frac{1}{t}\log (\abs*{X_{t} - X^\mathrm{opt}} /\abs*{X_0 - X^\mathrm{opt}}) = \log(r)$.
For a stochastic real-valued sequence, various generalizations are considered. 
In this study, the definition of the convergence rate is the limit of the slope of the logarithmic distance between the current solution and the optimum averaged over time.
\Cref{def:convergence} formally defines the convergence rate of the (1+1)-ES.
In what follows, $\norm{x}$ denotes the Euclidean norm of $x \in \R^d$. 
\begin{definition}[Convergence rate]\label{def:convergence}
Let $f: \R^d \to \R$ be a measurable function with the unique global optimum at $x^\mathrm{opt}\in\R^d$, and $\{\theta_t\}_{t \geq 0}$ be the sequence of the state vectors defined in \Cref{def:algorithm} with $m_0 \neq x^\mathrm{opt}$.
If there exists a constant $\mathrm{CR}>0$ satisfying
\begin{align}
\mathrm{Pr}\left[\lim_{t\rightarrow \infty} \frac{1}{t} \log \left( \frac{\norm{ m_t - x^\mathrm{opt} }}{\norm{m_0 - x^\mathrm{opt}}} \right) = -\mathrm{CR} \right] = 1
,
\end{align}
then $\exp(-\mathrm{CR})$ is called the convergence rate of the (1+1)-ES on $f$.
The upper convergence rate $\exp(-\mathrm{CR}^\mathrm{upper})$ on $f$ and the lower convergence rate $\exp(-\mathrm{CR}^\mathrm{lower})$ on $f$ are defined as constants satisfying
\begin{align}
&\mathrm{Pr}\left[\limsup_{t \to \infty} \frac{1}{t} \log \left( \frac{\norm{ m_t - x^\mathrm{opt} }}{\norm{m_0 - x^\mathrm{opt}}} \right) = -\mathrm{CR}^\mathrm{upper} \right] = 1
,
\\
&\mathrm{Pr}\left[\liminf_{t \to \infty} \frac{1}{t} \log \left( \frac{\norm{ m_t - x^\mathrm{opt} }}{\norm{m_0 - x^\mathrm{opt}}} \right) = -\mathrm{CR}^\mathrm{lower} \right] = 1
,
\end{align}
respectively.
\end{definition}

Our main mathematical tool to derive bounds for the upper and lower convergence rate of the (1+1)-ES is the strong law of large numbers on martingales \cite{chow1967strong}.
It reduces the analysis of the limit behavior to the analysis of the expected sequence change in one step. 
The following proposition is the central tool for our analysis.
Its proof is provided in \Cref{apdx:prop:convergence_rate}.
\begin{proposition}\label{prop:convergence_rate}
Let $\{\mathcal{F}_t\}_{t \geq 0}$ be a filtration of a $\sigma$-algebra, and $\{X_t\}_{t \geq 0}$ be a Markov chain adapted to $\{\mathcal{F}_t\}_{t \geq 0}$.
Consider the following conditions:
\begin{enumerate}
\item[C1] $\exists B^\mathrm{upper} > 0$ such that $\E[X_{t+1} - X_{t} \mid \mathcal{F}_t] \leq - B^\mathrm{upper}$ for all $t \geq 0$;
\item[C2] $\exists B^\mathrm{lower} > 0$ such that $\E[X_{t+1} - X_{t} \mid \mathcal{F}_t] \geq - B^\mathrm{lower}$ for all $t \geq 0$;
\item[C3] $\sum_{t=1}^{\infty}{\color{blue}\Var[X_{t}]} / t^2 < \infty$.
\end{enumerate}
(I) If C1 and C3 hold, then we have
$\Pr\left[\limsup_{t \to \infty} \frac{1}{t} (X_t - X_0) \leq - B^\mathrm{upper} \right] = 1$.
% \begin{align}
% \Pr\left[\limsup_{t \to \infty} \frac{1}{t} (X_t - X_0) \leq - B^\mathrm{upper} \right] = 1 \label{eq:prop:upper}
% .
% \end{align}
(II) If C2 and C3 hold, then we have 
$\Pr\left[\liminf_{t \to \infty} \frac{1}{t} (X_t - X_0) \geq - B^\mathrm{lower} \right] = 1$.
% \begin{align}
% \Pr\left[\liminf_{t \to \infty} \frac{1}{t} (X_t - X_0) \geq - B^\mathrm{lower} \right] = 1\label{eq:prop:lower}
% .
% \end{align}
\end{proposition}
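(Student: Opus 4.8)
The plan is to apply a Doob-type decomposition to the telescoped increments and then isolate a martingale whose scaled value vanishes almost surely. Write
\begin{equation}
X_t - X_0 = \sum_{s=1}^{t}\left(X_s - X_{s-1}\right) = M_t + A_t ,
\end{equation}
where I split each increment into a martingale difference and its predictable part by setting $D_s := X_s - X_{s-1} - \E[X_s - X_{s-1}\mid\mathcal{F}_{s-1}]$, $M_t := \sum_{s=1}^{t} D_s$, and $A_t := \sum_{s=1}^{t}\E[X_s - X_{s-1}\mid\mathcal{F}_{s-1}]$. By construction $\E[D_s\mid\mathcal{F}_{s-1}] = 0$, so $\{M_t\}_{t\geq 0}$ is a martingale adapted to $\{\mathcal{F}_t\}_{t\geq 0}$, while $A_t$ is predictable. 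Dividing by $t$, the whole problem reduces to controlling $\frac{1}{t}A_t$ (purely via the drift hypotheses) and showing $\frac{1}{t}M_t \to 0$ almost surely.

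The drift term is immediate. Under C1 every summand of $A_t$ is at most $-B^\mathrm{upper}$, hence $\frac{1}{t}A_t \leq -B^\mathrm{upper}$ for all $t$; symmetrically, under C2 we get $\frac{1}{t}A_t \geq -B^\mathrm{lower}$. The martingale term is where the real work lies. I would first note the key identity $D_s = X_s - \E[X_s\mid\mathcal{F}_{s-1}]$ (the $X_{s-1}$ contributions cancel because $X_{s-1}$ is $\mathcal{F}_{s-1}$-measurable), so that $\E[D_s^2\mid\mathcal{F}_{s-1}] = \Var[X_s\mid\mathcal{F}_{s-1}]$. Thus C3 states precisely that $\sum_{s=1}^{\infty}\E[D_s^2\mid\mathcal{F}_{s-1}]/s^2 < \infty$, which is exactly the hypothesis under which the strong law of large numbers for martingales of \cite{chow1967strong} yields $\frac{1}{t}M_t \to 0$ almost surely. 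Concretely, I would introduce the auxiliary martingale $N_t := \sum_{s=1}^{t} D_s/s$, observe that its conditional-variance increments sum to a finite quantity by C3, invoke the $L^2$ martingale convergence theorem to conclude that $N_t$ converges almost surely, and finally apply Kronecker's lemma to the convergent series $\sum D_s/s$ to obtain $\frac{1}{t}\sum_{s=1}^t D_s = \frac{1}{t}M_t \to 0$.

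With $\frac{1}{t}M_t \to 0$ in hand, I assemble the two claims using subadditivity of $\limsup$ and superadditivity of $\liminf$. For part (I), on the almost sure event $\{\tfrac{1}{t}M_t\to 0\}$,
\begin{equation}
\limsup_{t\to\infty}\frac{1}{t}(X_t - X_0) \leq \limsup_{t\to\infty}\frac{1}{t}M_t + \limsup_{t\to\infty}\frac{1}{t}A_t \leq 0 - B^\mathrm{upper} ,
\end{equation}
and for part (II), on the same event,
\begin{equation}
\liminf_{t\to\infty}\frac{1}{t}(X_t - X_0) \geq \liminf_{t\to\infty}\frac{1}{t}M_t + \liminf_{t\to\infty}\frac{1}{t}A_t \geq 0 - B^\mathrm{lower} .
\end{equation}
Since these events have probability one, the two stated conclusions follow. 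The main obstacle is the middle step: verifying that C3 supplies exactly the summability condition required by the martingale SLLN and that the $N_t$-plus-Kronecker argument legitimately transfers almost-sure convergence of the weighted series to the Cesàro-type statement $\frac{1}{t}M_t \to 0$. The drift bookkeeping and the final $\limsup$/$\liminf$ assembly are routine once that is established.
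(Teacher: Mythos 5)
Your proposal is correct and follows essentially the same route as the paper's proof: the same decomposition of $X_t - X_0$ into a martingale-difference sum plus a predictable drift term, the same identification of C3 with the summability condition $\sum_s \E[D_s^2 \mid \mathcal{F}_{s-1}]/s^2 < \infty$ required by the martingale strong law of large numbers of \cite{chow1967strong}, and the same final $\limsup$/$\liminf$ assembly under C1 and C2. The only difference is cosmetic: where the paper simply cites the martingale SLLN, you additionally sketch its standard proof (the weighted martingale $N_t = \sum_s D_s/s$, $L^2$ martingale convergence, and Kronecker's lemma), which is a valid elaboration rather than a different argument.
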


\subsection{Problem}\label{subsec:problem}

We analyze the Markov chain of the (1+1)-ES defined in \Cref{def:algorithm} for the objective function falling into the following class of functions, $\mathcal{P}_{d, L, U}^{\gamma}$.
Generally, locally strongly convex and Lipschitz smooth functions around the global optimum and their strictly increasing transformations are in the focus of our analysis.

\begin{definition}[$\mathcal{P}_{d, L, U}^\gamma$]\label{def:problem}
For a measurable function $f$ with the unique global minimum at the origin, let $\mathcal{T}^{\gamma}(f)$ be the set of all functions, $\{h: \R^d \to \R\}$, that can be expressed as $h(x) = (g \circ f)(x - x^\mathrm{opt})$ for $x \in \{\tilde{x} \in \R^d : h(\tilde{x}) \leq \gamma\}$, where $g: \R\to\R$ is strictly increasing and $x^\mathrm{opt} \in \R^d$ is the unique global minimum of $h$.
Let $\mathcal{S}_{d,L,U}$ be the set of all functions satisfying \Cref{asm:L_U_2diff} and \Cref{asm:tr_cond} below. 
The functions to be analyzed in this study are $\mathcal{P}_{d, L, U}^{\gamma} = \mathcal{T}^{\gamma}(\mathcal{S}_{d, L, U}) = \{h \mid h \in \mathcal{T}^\gamma(f) \text{ for } f \in \mathcal{S}_{d, L, U}\}$.
\end{definition}

Two important properties are introduced, \emph{strong convexity} and \emph{Lipschitz smoothness}, which are often assumed to analyze the convergence rate of gradient-based approaches.
In what follows, $\inner{a}{b} = a^\T b = \sum_{i=1}^{d} a_i b_i$ denotes the inner product of $a, b \in \R^d$. 
\begin{definition}[Strong convexity]
A differentiable function $f:\R^d\to \R$ is $L$-strongly convex over $S\subseteq\R^d$ if for any $x, y \in S$,
\begin{equation}
f(x) + \langle y-x, \nabla f(x) \rangle + \frac{L}{2}\norm{x-y}^2 \leq f(y)
    .
\end{equation}
\end{definition}
\begin{definition}[Lipschitz smoothness]
A differentiable function $f:\R^d\to\R$ is $U$-Lipschitz smooth over $S\subseteq\R^d$ if for any $x, y \in S$,
\begin{equation}
f(x) + \langle y-x, \nabla f(x) \rangle + \frac{U}{2}\norm{x-y}^2 \geq f(y)
    .
\end{equation}
\end{definition}
In light of \Cref{prop:invariance}, the Markov chain of the (1+1)-ES on $h \in \mathcal{T}^{\gamma}(f)$ is identified by that on $f$.
As is formally stated in \Cref{prop:f-norm} below, the analysis of the convergence rate of the (1+1)-ES on $h$ reduces to that on $f$. 
Namely, by analyzing the (1+1)-ES on $f \in \mathcal{S}_{d,L,U}$, for which we assume a relatively simple structure, we can generalize the results to a class of objective functions, $\mathcal{P}_{d, L, U}^{\gamma}$.

The first assumption on $f$ is stated below.
\begin{assumption}\label{asm:L_U_2diff}
A function $f:\R^d \to \R$ is measurable, continuously differentiable, $L$-strongly convex, and $U$-Lipschitz smooth in $\R^d$. 
The unique global minimum of $f$ is located at the origin, $x^* := \argmin_{x\in \R^d} f(x) = 0$, and the minimum value is $f(x^*) = 0$.
\end{assumption}
Here, we remark on the assumption.
If $f$ is $L$-strongly convex over $\R^d$, then there exists a unique critical point, which is the unique global minimum. 
That is, $\norm{\nabla f(x)}=0 \Leftrightarrow x=\argmin_{y \in \R^d} f(y)$. 
The existence of the global optimum $x^*$ at the origin and $f(x^*) = 0$ simplify our analysis but do not restrict the class of functions to be analyzed because of the arbitrarity of $g$ and $x^\mathrm{opt}$ in \Cref{def:problem}.
Moreover, the $L$-strong-convexity implies the Polyak-{\L}ojasiewicz condition \cite{Karimi2016}. Together with $f(x^*) = 0$, this indicates $\sqrt{2 L f(x)} \leq \norm{\nabla f(x)}$.
If $f$ is $U$-Lipschitz smooth over $\R^d$, its gradient $\nabla f$ is $U$-Lipschitz continuous. That is, $\norm{\nabla f(x) - \nabla f(y)}\leq U\cdot \norm{x-y}$ for all $x, y\in \R^d$.

% \Theta \setminus \{ \theta | m = 0 \}
\providecommand{\Thetazero}{\Theta_{0}}
The second assumption on $f$ is a technical one, which is proved later in \Cref{cor:tr_cond} to hold for sufficiently large $d$ under \Cref{asm:L_U_2diff}.
In the following, the cumulative density function induced by the $1$-dimensional standard normal distribution, $\mathcal{N}(0, 1)$, is denoted by $\Phi$. 
{See also \Cref{subsec:First-order expansion} for further description of $\mathcal{Q}_z(\theta)$.}
\begin{assumption}\label{asm:tr_cond}
Define
\begin{equation}
\mathcal{Q}_z(\theta) = \frac{2}{\sigma^2} \left(f(m + \sigma z) - f(m) - \inner{\nabla f(m)}{ \sigma z} \right) \enspace,\label{eq:q-def}
\end{equation}
and
\begin{equation}
    \mathcal{V}_\mathrm{std} = 
    \Var[\mathcal{Q}_z(\theta)]/(\E[\mathcal{Q}_z(\theta)])^2
    \enspace.
\end{equation}
Then, for $\mathcal{N}(0, I)$-distributed random vector $z \in \R^d$, 
\begin{multline}
\underset{\theta\in\Thetazero}{\sup}
\left\{\mathcal{V}_\mathrm{std}  \right\}
\\
< \frac14 \cdot \min \left\{ \Phi\left(\frac{\kappa_{\inf}}{2}\cdot\frac{1}{\sqrt{2\pi}}\right) - \frac12 , 1 - \Phi\left( \frac{\kappa_{\inf}}{2}\cdot\frac{3}{\sqrt{2\pi}}\right)\right\} 
\label{eq:tr_cond}
\end{multline}
holds with $\kappa_{\inf} := 
\underset{\theta\in\Thetazero}{\inf}
\left\{\frac{\E[\mathcal{Q}_z(\theta)]}{\E[\mathcal{Q}_z(\theta)\cdot\ind{z_e\leq 0}]}\right\} \geq 1$, where $\Thetazero = \Theta \setminus \{\theta \mid m = 0\}$ and $z_e = \inner{z}{\nabla f(m)}/\norm{\nabla f(m)}$.
\end{assumption}

Although we actually analyze the (1+1)-ES on objective function $f$ satisfying \Cref{asm:L_U_2diff} and \Cref{asm:tr_cond} (i.e., $f \in \mathcal{S}_{d, L, U}$), we can generalize the results to that on the function in $\mathcal{P}_{d, L, U}^{\gamma}$ because of the invariance properties of the (1+1)-ES summarized in \Cref{prop:invariance}.
The following proposition states that the convergence rate of the (1+1)-ES on $h \in \mathcal{T}^{\gamma}(f)$ is derived from the convergence rate of the (1+1)-ES on the corresponding $f \in \mathcal{S}_{d,L,U}$. Moreover, the convergence rate of the (1+1)-ES is equivalent to the limit of the slope of $\log(f(m_t))$ averaged over $t$.
Its proof is provided in \Cref{apdx:prop:f-norm}.
\begin{proposition}\label{prop:f-norm}
For measurable function $f$ satisfying \Cref{asm:L_U_2diff}, let $h \in \mathcal{T}^{\gamma}(f)$ and $x^\mathrm{opt}$ be the unique global minimum of $h$.
Let
$S: (m, \log(\sigma)) \mapsto (m + x^\mathrm{opt}, \log(\sigma))$.
We define $\{(\tilde{\theta}_t, \tilde{\mathcal{F}}_{t})\}_{t\geq 0} = \texttt{ES}(h, S(\theta_0), \{z_t\}_{t \geq 0})$ and $\{(\theta_t, \mathcal{F}_{t})\}_{t\geq 0} = \texttt{ES}(f, \theta_0, \{z_t\}_{t \geq 0})$.
Subsequently,  
if $\tilde{\theta}_0 \in \Theta\setminus\{\tilde{\theta}\mid \tilde{m}= x^\mathrm{opt}\}$ satisfies $h(\tilde{m}_0) \leq \gamma$,
we obtain, with probability one (the probability is taken for $\{z_t\}_{t \geq 0}$), 
\begin{align}
\limsup_{t \to \infty} \frac{1}{t} \log \left( \frac{\norm{ \tilde{m}_t - x^\mathrm{opt} }}{\norm{\tilde{m}_0 - x^\mathrm{opt}}} \right)
&=
\limsup_{t \to \infty} \frac{1}{2t} \log \left( \frac{f(m_t)}{f(m_0)} \right)
,
\label{eq:limsup}
\\
\liminf_{t \to \infty} \frac{1}{t} \log \left( \frac{\norm{ \tilde{m}_t - x^\mathrm{opt} }}{\norm{\tilde{m}_0 - x^\mathrm{opt}}} \right)
&=
%\liminf_{t \to \infty} \frac{1}{t} \log \left( \frac{f(m_t)}{f(m_0)} \right)
\liminf_{t \to \infty} \frac{1}{t} \log \left( \frac{\norm{m_t} }{\norm{m_0}} \right)
\label{eq:liminf}
.
\end{align}
\end{proposition}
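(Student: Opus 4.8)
The plan is to show that, for almost every realization of $\{z_t\}_{t\ge 0}$, the chain $\texttt{ES}(h, S(\theta_0), \{z_t\})$ is a rigid translate of $\texttt{ES}(f, \theta_0, \{z_t\})$, i.e. $\tilde{m}_t = m_t + x^\mathrm{opt}$ and $\tilde{\sigma}_t = \sigma_t$ for all $t$, and then to read both identities off this correspondence. Once it is in hand, $\norm{\tilde{m}_t - x^\mathrm{opt}} = \norm{m_t}$ and $\norm{\tilde{m}_0 - x^\mathrm{opt}} = \norm{m_0}$, so the left-hand sides of \eqref{eq:limsup} and \eqref{eq:liminf} both equal $\frac1t\log(\norm{m_t}/\norm{m_0})$. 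Then \eqref{eq:liminf} is immediate, and only \eqref{eq:limsup} requires converting $\norm{m_t}$ into $f(m_t)$.

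To obtain the correspondence I would chain the three invariances of \Cref{prop:invariance}. Write $T:x\mapsto x-x^\mathrm{opt}$ (so $S_T = S$) and let $\bar{h} := g\circ f\circ T$ be the extension to all of $\R^d$ of the local representation of $h$, with $g$ the strictly increasing map of \Cref{def:problem}. Because the $(1+1)$-ES is elitist, $h(\tilde{m}_{t+1})\le h(\tilde{m}_t)\le h(\tilde{m}_0)\le\gamma$, so every accepted mean stays in $\{x:h(x)\le\gamma\}$, where $h=\bar{h}$. The decisive structural fact I would extract from \Cref{def:problem} is the sub-level-set containment $\{x : f(x-x^\mathrm{opt})\le f(\tilde{m}_0-x^\mathrm{opt})\}\subseteq\{x:h(x)\le\gamma\}$. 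Under it, every accept/reject decision for $h$ agrees with that for $\bar{h}$, surely: a candidate $\tilde{x}_t$ accepted by $\bar{h}$ satisfies $f(\tilde{x}_t-x^\mathrm{opt})\le f(\tilde{m}_t-x^\mathrm{opt})\le f(\tilde{m}_0-x^\mathrm{opt})$, hence lies in $\{h\le\gamma\}$ where the two agree; a candidate rejected by $\bar{h}$ has $\bar{h}(\tilde{x}_t)>\bar{h}(\tilde{m}_t)=h(\tilde{m}_t)$, and is rejected by $h$ whether it lies inside $\{h\le\gamma\}$ (equality $h=\bar h$) or outside (there $h>\gamma\ge h(\tilde{m}_t)$). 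Thus \Cref{prop:invariance}(3) identifies the $h$-chain with the $\bar{h}$-chain, \Cref{prop:invariance}(1) strips the strictly increasing $g$ to give the $(f\circ T)$-chain, and \Cref{prop:invariance}(2) strips the translation $T$, yielding $\tilde{\theta}_t = S(\theta_t)$, i.e. $\tilde{m}_t = m_t + x^\mathrm{opt}$.

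For \eqref{eq:limsup} I would use that \Cref{asm:L_U_2diff}, evaluated at $x=0$ (where $f(0)=0$ and $\nabla f(0)=0$), sandwiches $f$ between two quadratics,
\[
\tfrac{L}{2}\norm{m_t}^2 \;\le\; f(m_t)\;\le\; \tfrac{U}{2}\norm{m_t}^2 ,
\]
so that $\log f(m_t) = 2\log\norm{m_t} + r_t$ with $r_t\in[\log(L/2),\log(U/2)]$ bounded. Consequently $\frac{1}{2t}\log(f(m_t)/f(m_0))$ and $\frac1t\log(\norm{m_t}/\norm{m_0})$ differ only by $\frac{1}{2t}\bigl(r_t - \log f(m_0) + 2\log\norm{m_0}\bigr) = O(1/t)$, a null sequence, and two real sequences differing by a null sequence share the same $\limsup$; this gives \eqref{eq:limsup}. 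All logarithms are well defined with probability one, since $m_0\neq 0$ and the Gaussian candidate hits the unique minimizer with probability zero, so $\norm{m_t}>0$ (equivalently $f(m_t)>0$) for every $t$ almost surely.

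The main obstacle is the middle step: rigorously securing the containment $\{f(\,\cdot-x^\mathrm{opt})\le f(\tilde{m}_0-x^\mathrm{opt})\}\subseteq\{h\le\gamma\}$ from \Cref{def:problem}, equivalently that $\{h\le\gamma\}$ is a \emph{complete} sub-level set of $\bar{h}$ rather than a proper subset of one. This is exactly the point at which the otherwise arbitrary behaviour of $h$ outside $\{h\le\gamma\}$ could let a candidate be accepted by $\bar h$ yet rejected by $h$, breaking the per-step decision matching on a non-null event; once the containment is established, the application of \Cref{prop:invariance}(3) and the remaining sandwich computation for \eqref{eq:limsup} are routine.
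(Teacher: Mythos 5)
Your proposal follows the same route as the paper's own proof: identify the $h$-chain with a rigid translate of the $f$-chain via \Cref{prop:invariance}, read \eqref{eq:liminf} off the identity $\norm{\tilde m_t - x^\mathrm{opt}} = \norm{m_t}$, and get \eqref{eq:limsup} from the two-sided quadratic bound (the paper writes it as $\frac{2f(x)}{U} \leq \norm{x}^2 \leq \frac{2f(x)}{L}$, which is your sandwich), whose per-step discrepancy is a constant in $[\log(L/U),\log(U/L)]$ and so vanishes after dividing by $t$. That part of your argument, including the almost-sure well-definedness of the logarithms, matches the paper essentially verbatim.

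Regarding the obstacle you flag: your worry is legitimate under a strictly literal reading of \Cref{def:problem}. That wording does not by itself force $\{h \leq \gamma\}$ to be a full sublevel set of $\bar h = g \circ f \circ T$; it permits $h$ whose set $\{h \leq \gamma\}$ has ``holes'', and for such $h$ the chain identification genuinely fails (e.g. $d=1$, $f(x)=x^2/2$, $g=\mathrm{id}$, $\gamma=1$, $x^\mathrm{opt}=0$, with $h$ equal to $f$ on $[-1,1]\cup(1.2,\sqrt{2}\,]$ and equal to $2$ elsewhere: starting from $\tilde m_0 = 1.3$, a candidate at $1.1$ is accepted by $f$ but rejected by $h$). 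The paper does not close this gap by argument --- its proof of \Cref{prop:f-norm} invokes only parts (1) and (2) of \Cref{prop:invariance} and never discusses the behaviour of $h$ outside $\{h\leq\gamma\}$ at all; it implicitly reads \Cref{def:problem} as saying that $\{h \leq \gamma\}$ \emph{is} the set $\{x : g(f(x - x^\mathrm{opt})) \leq \gamma\}$. Under that intended reading, the containment you need is one line: $h(\tilde m_0) \leq \gamma$ gives $\bar h(\tilde m_0) \leq \gamma$, hence $\{x : \bar h(x) \leq \bar h(\tilde m_0)\} \subseteq \{\bar h \leq \gamma\} = \{h \leq \gamma\}$, and your elitism-plus-(3) argument then yields $\tilde\theta_t = S(\theta_t)$ surely. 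So, granting that reading of the definition, your proof is complete --- and in fact more careful than the paper's, since it makes explicit the step (matching of every accept/reject decision via invariance (3)) that the paper silently assumes.
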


% An analysis of FHT on a function in $\mathcal{P}_{d, L, U}$ is also completed by that on $f$.
% \begin{proposition}\label{prop:FHT_mnorm}
%   Let $h \in \mathcal{P}_{d, L, U}$ and $x^\mathrm{opt}$, as given in \Cref{def:problem}.
%   Let $f$ be a function which satisfies \Cref{asm:L_U_2diff}, and $S: (m, \log(\sigma)) \mapsto (m + x^\mathrm{opt}, \log(\sigma))$.
%   We define $\{(\tilde{\theta}_t, \tilde{\mathcal{F}}_{t})\}_{t\geq 0} = \texttt{ES}(h, \tilde{\theta}_0, \{z_t\}_{t \geq 0})$ and $\{(\theta_t, \mathcal{F}_{t})\}_{t\geq 0} = \texttt{ES}(f, S(\tilde{\theta}_0), \{z_t\}_{t \geq 0})$.
%   Subsequently, the first hitting time of $\{\log\norm{m_t - x^\mathrm{opt}}\}_{t\geq 0}$ for $\beta\leq\log\norm{m_0 - x^\mathrm{opt}}$ is equal or smaller than to that of $\{\log(f(m_t))\}_{t\geq 0}$ for $\beta + \log\left(\frac{L}{2}\right)$.
% \end{proposition}
% \begin{proof}
%     In light of \Cref{prop:f-norm}, it is trivial as $f(x) \geq \frac{L}{2}\cdot\norm{x}$ for any $x\in\R^d$ when $f$ satisfies \Cref{asm:L_U_2diff}.
%     
% \end{proof}

\section{Lemmas}

In this section, several mathematical tools to analyze a one-step behavior of the (1+1)-ES on objective function $f$ satisfying \Cref{asm:L_U_2diff} and \Cref{asm:tr_cond} are presented. 
In the following, we let $\Thetazero = \Theta \setminus \{\theta \mid m = 0\}$. 

\subsection{First-order expansion}\label{subsec:First-order expansion}
Consider $\mathcal{Q}_z(\theta)$ defined in \eqref{eq:q-def}. 
Because $f$ is strongly convex under \Cref{asm:L_U_2diff}, we have $\mathcal{Q}_z(\theta) > 0$. We can consider that $\frac{\sigma^2}{2} \mathcal{Q}_z(\theta)$ is the remainder term of the first-order Taylor expansion of $f$ around $m$. 
If we let $m = m_t$, $\sigma = \sigma_t$ and $z = z_t$, we have $f(x_t) = f(m_t) + \inner{\nabla f(m_t)}{ \sigma_t z_t} + \frac{\sigma_t^2}{2} \mathcal{Q}_{z_t}(\theta_t)$.
This quantity is essential for analyzing the one-step behavior of the algorithm.
The following lemma shows some properties of $\mathcal{Q}_z(\theta)$ under \Cref{asm:L_U_2diff}.
Its proof is provided in \Cref{apdx:lemma:variance}.
% \yohe{\Cref{lemma:variance} seems to be not used for drift proof. Move it later to the convergence rate evaluation part. Keep \eqref{eq:q-def}.}
% \mori{As we use the integrability of $\E[\mathcal{Q}]$ and $\Var[\mathcal{Q}]$, it's not very bad to put this lemma here.}
\begin{lemma}\label{lemma:variance}
Suppose that $f$ satisfies \Cref{asm:L_U_2diff}. 
Let $\mathcal{Q}_z(\theta)$ be defined in \eqref{eq:q-def}. 
Let $z \in \R^d$ be $\mathcal{N}(0, I)$-distributed and $z_e = \inner{z}{\nabla f(m)}/\norm{\nabla f(m)}$. 
Then, for any $\theta\in\Thetazero$,
(1) $d L \leq \E[\mathcal{Q}_z(\theta)] \leq d U$;
(2) $\Var[\mathcal{Q}_z(\theta)] \leq 4 d U^2$;
(3) $\abs{ \E[\mathcal{Q}_z(\theta) \cdot \ind{z_e \leq 0}] - \E[\mathcal{Q}_z(\theta)] / 2 } \leq (2 / d)^{1/2} (U/L) \cdot \E[\mathcal{Q}_z(\theta)]$.
% Moreover, if $f$ is twice continuously differentiable, then for any $\bar{m} \in \R^d$
% \begin{enumerate}
% \item[4.] $\lim_{(m,\sigma) \to (\bar{m}, 0)} \E[\mathcal{Q}_z(\theta)] = \Tr(\nabla^2 f(\bar{m}))$;
% \item[5.] $\lim_{(m,\sigma) \to (\bar{m}, 0)} \Var[\mathcal{Q}_z(\theta)] = 2 \Tr((\nabla^2 f(\bar{m}))^2)$;
% \item[6.] $\lim_{(m,\sigma) \to (\bar{m}, 0)} \E[\mathcal{Q}_z(\theta) \cdot \ind{z_e \leq 0}] = \Tr(\nabla^2 f(\bar{m})) / 2$,
% \end{enumerate}
% where the limits are taken for any sequence of $\theta = (m, \sigma) \in\Theta\setminus\{\theta|m\neq x^*\}$.
\end{lemma}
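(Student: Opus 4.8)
The plan is to first derive pointwise (in $z$) two-sided bounds on $\mathcal{Q}_z(\theta)$ and then dispatch the three claims in turn. Setting $x = m$ and $y = m + \sigma z$ in the definitions of $L$-strong convexity and $U$-Lipschitz smoothness under \Cref{asm:L_U_2diff}, and multiplying through by $2/\sigma^2$, gives
\begin{equation}
L \norm{z}^2 \leq \mathcal{Q}_z(\theta) \leq U \norm{z}^2 \quad \text{for every } z .
\end{equation}
Since $z \sim \mathcal{N}(0, I)$ has $\E[\norm{z}^2] = d$, taking expectations yields claim (1), namely $dL \leq \E[\mathcal{Q}_z(\theta)] \leq dU$.

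For the variance bound (2), the naive estimate $\Var[\mathcal{Q}_z(\theta)] \leq \E[\mathcal{Q}_z(\theta)^2] \leq U^2 \E[\norm{z}^4]$ is too weak, because $\E[\norm{z}^4] = d^2 + 2d$ is quadratic in $d$. Instead I would invoke the Gaussian Poincaré inequality, $\Var[g(z)] \leq \E[\norm{\nabla_z g(z)}^2]$ for $z \sim \mathcal{N}(0, I)$. Differentiating the definition gives $\nabla_z \mathcal{Q}_z(\theta) = \frac{2}{\sigma}\left(\nabla f(m + \sigma z) - \nabla f(m)\right)$, whose norm is at most $2U\norm{z}$ by the $U$-Lipschitz continuity of $\nabla f$. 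Hence $\Var[\mathcal{Q}_z(\theta)] \leq \E[\norm{\nabla_z \mathcal{Q}_z(\theta)}^2] \leq 4U^2 \E[\norm{z}^2] = 4 d U^2$, which is claim (2).

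Claim (3) is an asymmetry estimate, which I would attack by a reflection argument. Let $R$ flip the component of $z$ along the unit gradient direction $\hat g = \nabla f(m)/\norm{\nabla f(m)}$; then $R$ is orthogonal, preserves the standard Gaussian law, fixes $\norm{z}$, and sends $z_e$ to $-z_e$. Splitting $\E[\mathcal{Q}_z(\theta)]$ over $\{z_e \leq 0\}$ and $\{z_e > 0\}$ and applying $R$ to the second piece yields the identity
\begin{equation}
\E[\mathcal{Q}_z(\theta)\ind{z_e \leq 0}] - \tfrac12\E[\mathcal{Q}_z(\theta)] = \tfrac12\E\bigl[(\mathcal{Q}_z(\theta) - \mathcal{Q}_{Rz}(\theta))\ind{z_e \leq 0}\bigr] .
\end{equation}
The core estimate is then $\abs{\mathcal{Q}_z(\theta) - \mathcal{Q}_{Rz}(\theta)} \leq 4U\norm{z}\abs{z_e}$: writing the difference through the integral form of the first-order remainder and noting that the displacement between the two evaluation points stays within $\sigma\norm{z}$ of $m$ (the reflected directions interpolate with coefficient $\abs{2t-1} \leq 1$), the $U$-Lipschitz bound on $\nabla f$ delivers exactly this factor. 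Cauchy--Schwarz then gives $\E[\norm{z}\abs{z_e}\ind{z_e \leq 0}] \leq (\E[\norm{z}^2]\,\E[z_e^2\ind{z_e \leq 0}])^{1/2} = (d/2)^{1/2}$, so the left-hand side is at most $U\sqrt{2d}$. Dividing by $\E[\mathcal{Q}_z(\theta)] \geq dL$ from claim (1) converts this into the stated relative bound $(2/d)^{1/2}(U/L)\cdot\E[\mathcal{Q}_z(\theta)]$.

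The main obstacle is the difference estimate in (3): a crude bound of the form $\abs{\mathcal{Q}_z(\theta) - \mathcal{Q}_{Rz}(\theta)} \leq 2U\norm{z}^2$ would only recover an $O(U/L)$ factor and lose the crucial $d^{-1/2}$ gain. Extracting the extra factor $\abs{z_e}$ — which reflects that the difference vanishes as $z_e \to 0$ — requires tracking the displacement along $\hat g$ carefully through the integral remainder, and this is the technical heart of the lemma. A secondary point worth flagging is that \Cref{asm:L_U_2diff} only provides $f \in C^1$, so both the Poincaré step and the remainder argument must be phrased through the (Lipschitz) gradient rather than through a Hessian.
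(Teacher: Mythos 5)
Your proofs of (1) and (2) are correct and essentially identical to the paper's: the pointwise bounds $L\norm{z}^2 \leq \mathcal{Q}_z(\theta) \leq U\norm{z}^2$ together with $\E[\norm{z}^2]=d$ for (1), and the Gaussian Poincar\'e inequality (the paper invokes it via Chen's inequality \cite{chen1982inequality}) with $\nabla_z \mathcal{Q}_z(\theta) = \tfrac{2}{\sigma}\left(\nabla f(m+\sigma z)-\nabla f(m)\right)$ and the Lipschitz bound $\norm{\nabla_z\mathcal{Q}_z(\theta)}\leq 2U\norm{z}$ for (2). For (3), however, you take a genuinely different route. The paper needs no reflection and no pointwise difference estimate: it decomposes $\E[\mathcal{Q}_z(\theta)\ind{z_e\leq 0}] = \E[\mathcal{Q}_z(\theta)]/2 + \E[(\mathcal{Q}_z(\theta)-\E[\mathcal{Q}_z(\theta)])\ind{z_e\leq 0}]$, bounds the fluctuation term by Cauchy--Schwarz as $(\Var[\mathcal{Q}_z(\theta)]\cdot\E[\ind{z_e\leq 0}])^{1/2} = (\Var[\mathcal{Q}_z(\theta)]/2)^{1/2}$, and then simply reuses (2) and (1): $(\Var[\mathcal{Q}_z(\theta)]/2)^{1/2}\leq (2d)^{1/2}U \leq (2/d)^{1/2}(U/L)\cdot\E[\mathcal{Q}_z(\theta)]$. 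In other words, in the paper (3) is a short corollary of (2), and the $d^{-1/2}$ gain you describe as the technical heart is already encoded in the Poincar\'e-based variance bound. Your symmetrization argument---reflecting $z$ across the hyperplane orthogonal to $\nabla f(m)$, the identity $\E[\mathcal{Q}_z(\theta)\ind{z_e\leq 0}] - \tfrac12\E[\mathcal{Q}_z(\theta)] = \tfrac12\E[(\mathcal{Q}_z(\theta)-\mathcal{Q}_{Rz}(\theta))\ind{z_e\leq 0}]$ (valid up to the null event $\{z_e=0\}$), and the pointwise bound $\abs{\mathcal{Q}_z(\theta)-\mathcal{Q}_{Rz}(\theta)}\leq 4U\norm{z}\abs{z_e}$ via the integral remainder with the interpolation estimate $\norm{(1-t)Rz+tz}\leq\norm{z}$---is correct as far as I can check, and it lands on exactly the same final constant $U\sqrt{2d}\leq (2/d)^{1/2}(U/L)\, dL$. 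What your route buys is that (3) becomes independent of (2) and of any Poincar\'e-type inequality, and it exposes the geometric source of the asymmetry (the difference vanishes as $z_e\to 0$); what the paper's route buys is brevity, since the cancellation is delegated wholesale to the variance bound already in hand.
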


In light of \Cref{lemma:variance}, we can show that \Cref{asm:tr_cond} is satisfied for a sufficiently large $d$ under \Cref{asm:L_U_2diff}. The proof of the following result is provided in \Cref{apdx:cor:tr_cond}.
\begin{corollary}\label{cor:tr_cond}
Given $U \geq L > 0$, there exists an integer $D$ such that \Cref{asm:tr_cond} is satisfied for all $f$ satisfying \Cref{asm:L_U_2diff} and for all $d \geq D$. 
\end{corollary}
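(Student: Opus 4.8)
The plan is to use \Cref{lemma:variance} to show that the left-hand side of \eqref{eq:tr_cond} vanishes as $d \to \infty$ while the right-hand side stays bounded away from zero, so that the strict inequality holds for all sufficiently large $d$. First I would bound the left-hand side. Combining parts (1) and (2) of \Cref{lemma:variance}, for every $\theta \in \Thetazero$ we have $\Var[\mathcal{Q}_z(\theta)] \leq 4 d U^2$ and $\E[\mathcal{Q}_z(\theta)] \geq d L$, hence
\[
\mathcal{V}_\mathrm{std} = \frac{\Var[\mathcal{Q}_z(\theta)]}{(\E[\mathcal{Q}_z(\theta)])^2} \leq \frac{4 d U^2}{(d L)^2} = \frac{4 U^2}{d L^2}.
\]
Because this bound is uniform in $\theta$, we obtain $\sup_{\theta \in \Thetazero} \mathcal{V}_\mathrm{std} \leq 4 U^2/(d L^2)$, which tends to $0$ as $d \to \infty$.

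Next I would control $\kappa_{\inf}$. Part (3) of \Cref{lemma:variance} gives, for every $\theta$,
\[
\left(\tfrac12 - \sqrt{2/d}\,(U/L)\right)\E[\mathcal{Q}_z(\theta)] \leq \E[\mathcal{Q}_z(\theta)\cdot\ind{z_e\leq 0}] \leq \left(\tfrac12 + \sqrt{2/d}\,(U/L)\right)\E[\mathcal{Q}_z(\theta)].
\]
Once $d$ is large enough that $\sqrt{2/d}\,(U/L) < 1/2$, both outer factors are positive, and inverting yields
\[
\frac{1}{\tfrac12 + \sqrt{2/d}\,(U/L)} \leq \frac{\E[\mathcal{Q}_z(\theta)]}{\E[\mathcal{Q}_z(\theta)\cdot\ind{z_e\leq 0}]} \leq \frac{1}{\tfrac12 - \sqrt{2/d}\,(U/L)}.
\]
Since these bounds are uniform in $\theta$, taking the infimum over $\theta$ squeezes $\kappa_{\inf}$ between the same two quantities, both of which converge to $2$ as $d \to \infty$.

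Finally I would combine the pieces. Fix $d$ large enough that $\sqrt{2/d}\,(U/L) \leq 1/4$; then $\kappa_{\inf} \in [4/3, 4]$, a compact interval bounded away from $0$ and $\infty$. On this interval the first argument of the minimum in \eqref{eq:tr_cond} is increasing in $\kappa_{\inf}$ and positive (as $\Phi$ exceeds $1/2$ on the positive axis), while the second is decreasing in $\kappa_{\inf}$ and positive (its $\Phi$-argument is finite), so by monotonicity of $\Phi$ the right-hand side is bounded below by the fixed positive constant $\tfrac14\min\{\Phi(\tfrac{2}{3\sqrt{2\pi}}) - \tfrac12,\; 1 - \Phi(\tfrac{6}{\sqrt{2\pi}})\}$, independent of $d$. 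Since the left-hand side is $O(1/d)$, choosing $D$ so that $4U^2/(D L^2)$ drops below that constant (and $\sqrt{2/D}\,(U/L) \leq 1/4$) establishes \eqref{eq:tr_cond} for all $d \geq D$.

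The main obstacle I anticipate is this last step: the right-hand side depends on $\kappa_{\inf}$, which itself varies with $d$ and is an infimum over $\theta$, and it does so through two $\Phi$-terms that move in opposite directions as $\kappa_{\inf}$ grows, so neither term is monotone in $d$ in an obvious way. The crucial observation that resolves this is that the two-sided control of $\kappa_{\inf}$ eventually traps it in a fixed compact interval bounded away from $0$ and $\infty$, on which both terms of the minimum are uniformly positive; this decouples the limiting behavior of the two sides and lets the vanishing left-hand side win.
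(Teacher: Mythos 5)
Your proposal is correct and follows essentially the same route as the paper's proof: both use parts (1) and (2) of \Cref{lemma:variance} to show $\sup_{\theta}\mathcal{V}_\mathrm{std} \leq 4U^2/(dL^2) \to 0$, part (3) to trap $\kappa_{\inf}$ near $2$ for large $d$, and then conclude that the right-hand side of \eqref{eq:tr_cond} stays bounded below by a positive constant while the left-hand side vanishes. The only difference is one of explicitness: where the paper simply asserts that $\kappa_{\inf} \in (2-\epsilon, 2+\epsilon)$ implies the right-hand side is bounded below by some $c > 0$, you spell out the monotonicity of the two $\Phi$-terms on the compact interval $[4/3, 4]$ and exhibit the constant $\tfrac14\min\bigl\{\Phi\bigl(\tfrac{2}{3\sqrt{2\pi}}\bigr) - \tfrac12,\; 1 - \Phi\bigl(\tfrac{6}{\sqrt{2\pi}}\bigr)\bigr\}$ explicitly, which is a nice refinement but not a different argument.
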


\subsection{Log-progress}

The log-progress of the (1+1)-ES is defined as the single step decrease of the logarithm of the objective function value, that is,
\begin{equation}
\log\left(\frac{f(m_{t+1})}{f(m_t)}\right) = \log\left( \frac{ f(m_t + \sigma_t z_t)}{ f(m_t) } \right) \ind{f(m_t + \sigma_t z_t) \leq f(m_t)}.
\end{equation}
Along with inequality $\log(x) \leq x - 1$ for $x > 0$, the next lemma offers an upper bound of the expected log-progress.
The proof is provided in \Cref{apdx:lemma:qualitygain}
\begin{lemma}[Upper bound of expected log-progress]\label{lemma:qualitygain}
Let $f$ be a function which satisfies \Cref{asm:L_U_2diff}. Let $z \in \R^d$ be $\mathcal{N}(0, I)$-distributed and $z_e = \inner{z}{\nabla f(m)}/\norm{\nabla f(m)}$.
%Let $z_e = (\nabla f(m)^T z)/\norm{\nabla f(m)}$, $\delta\in(0, 1)$, and $T = \E\left[z^\mathrm{T} H_z(\theta) z \cdot \ind{z_e \leq 0}\right]$.
For any $\theta\in\Thetazero$,
\begin{multline}
\E\left[\left( \frac{ f(m + \sigma z) - f(m) }{ f(m) } \right) \ind{f(m + \sigma z) \leq f(m)}\right]
\leq \\
\frac{\sigma \norm{\nabla f(m)}}{f(m)}\cdot\left(
\frac{\sigma \cdot \E\left[\mathcal{Q}_z(\theta)\cdot \ind{z_e\leq 0} \right] }{2\norm{\nabla f(m)}}
- \frac{1}{\sqrt{2\pi}}\right)
\\
\cdot \Pr[f(m + \sigma z) \leq f(m) ]
.
\label{eq:qualitygain}
\end{multline}
%The RHS is negative if and only if $\sigma<\sqrt{\frac{2}{\pi}}\cdot \frac{\norm{\nabla f(m)}}{\E\left[\mathcal{Q}_z(\theta) \right]}$. %where \E\left[\mathcal{Q}_z(\theta) \right] > 0$.
%\mori{Is $T=\E\left[z^\mathrm{T} H_z z \cdot \ind{z_e \leq 0}\right]=\frac12 \E[z^\mathrm{T} H_z z]$?}
\end{lemma}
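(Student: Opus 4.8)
The plan is to bound the conditional expectation of the log-progress by first using the elementary inequality $\log(x) \leq x - 1$ for $x > 0$ to pass from the logarithmic progress to the relative function-value decrease, and then to evaluate the expectation of the relative decrease restricted to the success event $\{f(m + \sigma z) \leq f(m)\}$. The target inequality \eqref{eq:qualitygain} is already stated in terms of the relative decrease (not the logarithm), so the $\log(x) \leq x-1$ step is what connects this lemma to the log-progress defined just before the statement; the real content of the lemma is the upper bound on $\E[(f(m+\sigma z)-f(m))/f(m)) \cdot \ind{f(m+\sigma z) \leq f(m)}]$.

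**First I would** substitute the first-order expansion $f(m + \sigma z) = f(m) + \inner{\nabla f(m)}{\sigma z} + \frac{\sigma^2}{2}\mathcal{Q}_z(\theta)$ from \Cref{subsec:First-order expansion}, so that the relative decrease becomes $\frac{1}{f(m)}\bigl(\inner{\nabla f(m)}{\sigma z} + \frac{\sigma^2}{2}\mathcal{Q}_z(\theta)\bigr)$. Factoring out $\frac{\sigma \norm{\nabla f(m)}}{f(m)}$ and writing $\inner{\nabla f(m)}{z} = \norm{\nabla f(m)} \cdot z_e$, the quantity inside the expectation becomes $\frac{\sigma \norm{\nabla f(m)}}{f(m)}\bigl(z_e + \frac{\sigma}{2\norm{\nabla f(m)}}\mathcal{Q}_z(\theta)\bigr)\ind{f(m+\sigma z)\leq f(m)}$. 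The structure of the target right-hand side — with its $\E[\mathcal{Q}_z(\theta)\ind{z_e \leq 0}]$ term and its $-\frac{1}{\sqrt{2\pi}}$ term — strongly suggests splitting the expectation into the contribution of $z_e$ and the contribution of $\mathcal{Q}_z(\theta)$, and handling each by restricting or enlarging the success event to the half-space event $\{z_e \leq 0\}$.

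**The key step** is to control the two pieces using the right monotonicity of the success indicator. For the $\mathcal{Q}_z(\theta)$ term, since $\mathcal{Q}_z(\theta) > 0$ and one wants an upper bound, I expect to bound $\E[\mathcal{Q}_z(\theta)\ind{f(m+\sigma z)\leq f(m)}]$ from above by $\E[\mathcal{Q}_z(\theta)\ind{z_e \leq 0}]$, which requires arguing that the success event is contained in (or dominated by) $\{z_e \leq 0\}$ in the relevant sense; this is plausible because a successful step ($f$ decreases) forces the directional component $z_e$ against the gradient to be negative when the step is not too large, but the remainder $\mathcal{Q}_z(\theta)$ complicates the exact containment. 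For the linear term $z_e$, I would isolate $\E[z_e \ind{f(m+\sigma z)\leq f(m)}]$ and bound it below by something proportional to $\Pr[f(m+\sigma z)\leq f(m)]$, using that conditioned on success $z_e$ is negative with a controllable mean, where the constant $\frac{1}{\sqrt{2\pi}}$ emerges as the expected value of a one-sided standard Gaussian (since $\E[|Z|\ind{Z\leq 0}] = \frac{1}{\sqrt{2\pi}}$ for $Z\sim\mathcal{N}(0,1)$).

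**The hard part will be** rigorously establishing the inclusion/domination relating the success event $\{f(m+\sigma z)\leq f(m)\}$ to the half-space $\{z_e \leq 0\}$ so that both bounds go in the direction needed for an upper bound, while simultaneously extracting the clean constant $\frac{1}{\sqrt{2\pi}}$ from the linear term and factoring out $\Pr[f(m+\sigma z)\leq f(m)]$ without losing tightness. The delicate point is that the success event is a curved region (its boundary is a level set of $f$, not a hyperplane), so the symmetry of the Gaussian cannot be invoked directly; I anticipate needing a conditioning argument on the magnitude of the step or a careful sign analysis separating the favorable directional contribution from the unfavorable curvature contribution captured by $\mathcal{Q}_z(\theta)$. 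Once these two pieces are combined and the common factor $\frac{\sigma \norm{\nabla f(m)}}{f(m)} \cdot \Pr[f(m+\sigma z)\leq f(m)]$ is pulled out, the stated bound \eqref{eq:qualitygain} should follow.
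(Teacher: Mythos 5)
Your setup---reducing to the relative decrease via $\log(x)\le x-1$ and expanding $f(m+\sigma z)-f(m)=\sigma\norm{\nabla f(m)}z_e+\frac{\sigma^2}{2}\mathcal{Q}_z(\theta)$---matches the paper, and the event containment you hedge on is in fact immediate: convexity gives $f(m+\sigma z)\ge f(m)+\inner{\nabla f(m)}{\sigma z}$, so success forces $z_e\le 0$ with no restriction on the step length, and the remainder $\mathcal{Q}_z(\theta)>0$ only reinforces this. The genuine gap is in your key step. Abbreviate $g=\norm{\nabla f(m)}$, $P=\Pr[f(m+\sigma z)\le f(m)]$, $A=\E[z_e\ind{f(m+\sigma z)\le f(m)}]$, $B=\E[\mathcal{Q}_z(\theta)\ind{f(m+\sigma z)\le f(m)}]$, and $B'=\E[\mathcal{Q}_z(\theta)\ind{z_e\le 0}]$. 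After multiplying both sides by $f(m)$, the claim \eqref{eq:qualitygain} reads $\sigma g A+\frac{\sigma^2}{2}B\le\bigl(\frac{\sigma^2}{2}B'-\frac{\sigma g}{\sqrt{2\pi}}\bigr)\,P$: the success probability $P$ multiplies the \emph{positive} curvature term as well. Your bound $B\le B'$ (containment plus $\mathcal{Q}_z(\theta)>0$) loses exactly that factor, so even granting your linear-term bound $A\le -P/\sqrt{2\pi}$ you only reach $\sigma g A+\frac{\sigma^2}{2}B\le\frac{\sigma^2}{2}B'-\frac{\sigma g}{\sqrt{2\pi}}P$, which overshoots the target by $\frac{\sigma^2}{2}B'(1-P)\ge 0$ and is therefore strictly weaker than the lemma. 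Nor can the loss be repaired term by term, because $B\le B'P$ is false in general: on $f(x)=\frac12\norm{x}^2$ one has $\mathcal{Q}_z(\theta)=\norm{z}^2$, and as $\sigma\to 0$ the success event increases to $\{z_e\le 0\}$, so $B\to\E[\norm{z}^2\ind{z_e\le 0}]=d/2$ while $B'P\to\frac{d}{2}\cdot\frac12=\frac{d}{4}$.

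The missing idea is the paper's joint decoupling. Put $G_a(z)=(f(m+\sigma z)-f(m))\ind{z_e\le 0}$, keeping the linear and curvature parts together, and $G_b(z)=\ind{f(m+\sigma z)\le f(m)}$. By the containment, the left side of \eqref{eq:qualitygain} equals $\E[G_a(z)G_b(z)]/f(m)$. The functions $G_a$ and $G_b$ are oppositely ordered: if $G_b(z_1)=1>0=G_b(z_2)$, then $G_a(z_1)\le 0\le G_a(z_2)$, hence $(G_a(z_1)-G_a(z_2))(G_b(z_1)-G_b(z_2))\le 0$ for all $z_1,z_2$; averaging this over two independent copies of $z$ (Chebyshev's sum inequality, \cite[Theorem 236]{hardy1952inequalities}) yields $\E[G_a G_b]\le\E[G_a]\E[G_b]$. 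This single inequality is what places the factor $P=\E[G_b]$ in front of the whole parenthesis. The proof then closes with the exact evaluation $\E[G_a]=-\frac{\sigma g}{\sqrt{2\pi}}+\frac{\sigma^2}{2}B'$, which is where your identity $\E[z_e\ind{z_e\le 0}]=-1/\sqrt{2\pi}$ enters. A split into separately bounded $z_e$ and $\mathcal{Q}_z(\theta)$ pieces cannot recover this product structure, so you should replace your key step by this negative-correlation argument.
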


Using inequality $x \geq 1 - \exp(\abs{x})$ for $x \leq 0$, the following lemma argues that the log-progress is finite in expectation. Moreover, with the fact that $\Var[x] \leq \E[x^2] \leq 2 \E\left[\exp(\abs{x})\right]$, it also offers an upper bound of the variance of the log-progress.
Its proof is provided in \Cref{apdx:lemma:variancebound}.
\begin{lemma}[Variance bound of log-progress]\label{lemma:variancebound}
Suppose $f:\R^d\to\R$ satisfies \Cref{asm:L_U_2diff} and $d>3$. 
Let $z \in \R^d$ be $\mathcal{N}(0, I)$-distributed.
For any $\theta\in\Thetazero$,
\begin{multline}
\E\left[\exp\left( \abs*{\log\left( \frac{ f(m + \sigma z) }{ f(m) } \right) \ind{f(m + \sigma z) \leq f(m)} }\right)\right]
\\
\leq
\frac{U}{L}\cdot\left(1+\frac{1}{d-3}\right)
.
\label{eq:lemma:variancebound}
\end{multline}
\end{lemma}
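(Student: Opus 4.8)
My proof plan proceeds by evaluating the integrand pointwise, stripping $f$ with the convexity/smoothness sandwich, and then reducing the whole statement to a single inverse-moment estimate for a chi-squared variable. Let $A_c=\{f(m+\sigma z)\le f(m)\}$ denote the acceptance event and abbreviate the left-hand side of \eqref{eq:lemma:variancebound} by $\Xi$. On $A_c$ the indicator is one and $\log(f(m+\sigma z)/f(m))\le 0$, so the absolute value removes the sign and the exponential equals $f(m)/f(m+\sigma z)\ge 1$; off $A_c$ the indicator vanishes and the exponential equals $1$. Hence the quantity inside the outer expectation equals $\max\{f(m)/f(m+\sigma z),\,1\}$ pointwise, and
\begin{equation}
\Xi = \E\!\left[\frac{f(m)}{f(m+\sigma z)}\,\ind{A_c}\right] + \Pr[A_c^{\,c}] = 1 + \E\!\left[\Big(\frac{f(m)}{f(m+\sigma z)}-1\Big)\ind{A_c}\right].
\end{equation}

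Next I would invoke \Cref{asm:L_U_2diff}. Since the minimizer is the origin with $f=0$ and $\nabla f=0$ there, strong convexity and Lipschitz smoothness give $\tfrac{L}{2}\norm{y}^2\le f(y)\le\tfrac{U}{2}\norm{y}^2$ for every $y$, so $f(m)/f(m+\sigma z)\le (U/L)\,\norm{m}^2/\norm{m+\sigma z}^2$. Because $U/L\ge 1$ and $\max(\cdot,1)$ is nondecreasing in its first argument, the elementary identity $\max\{c\xi,1\}=c\max\{\xi,1/c\}\le c\max\{\xi,1\}$ (with $c=U/L$) yields
\begin{equation}
\Xi \;\le\; \frac{U}{L}\,\E\!\left[\max\!\Big\{\frac{\norm{m}^2}{\norm{m+\sigma z}^2},\,1\Big\}\right] \;=\; \frac{U}{L}\left(1 + \E\!\left[\Big(\frac{\norm{m}^2}{\norm{m+\sigma z}^2}-1\Big)^{+}\right]\right).
\end{equation}
The lemma thus reduces to the single claim $\E[(\norm{m}^2/\norm{m+\sigma z}^2-1)^{+}]\le 1/(d-3)$.

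For this claim I would split $z$ into its component along $m$ and the orthogonal complement: with $z_\parallel=\inner{z}{m}/\norm{m}\sim\mathcal{N}(0,1)$ and $z_\perp$ the orthogonal part, $\norm{z_\perp}^2\sim\chi^2_{d-1}$ is independent of $z_\parallel$, and $\norm{m+\sigma z}^2=(\norm{m}+\sigma z_\parallel)^2+\sigma^2\norm{z_\perp}^2$. Writing $a=\norm{m}/\sigma$ and scaling out $\sigma^2$, the target becomes $\E[(a^2/R^2-1)^{+}]$ with $R^2=(a+z_\parallel)^2+\norm{z_\perp}^2$, i.e.\ a noncentral $\chi^2_d(a^2)$. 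The constant $1/(d-3)$ is exactly $\E[1/\norm{z_\perp}^2]=\E[1/\chi^2_{d-1}]$, which is finite precisely because $d>3$; this is where that hypothesis enters. The positive part is nonzero only on $\{R^2<a^2\}$, so after integrating the orthogonal coordinate first one is led to control $\E_{z_\perp}[(a^2/((a+z_\parallel)^2+\norm{z_\perp}^2)-1)^{+}]$ and then the surviving one-dimensional integral over $z_\parallel$.

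The hard part is obtaining this bound \emph{uniformly} over $a=\norm{m}/\sigma\in(0,\infty)$, and here two opposing simplifications both fail. Discarding the parallel term, i.e.\ using only $\norm{m+\sigma z}^2\ge\sigma^2\norm{z_\perp}^2$, makes the estimate diverge like $a^2$ as $a\to\infty$, so the parallel contribution $(a+z_\parallel)^2$ must be retained in the denominator. Conversely, discarding the truncation $\ind{R^2<a^2}$—equivalently replacing the orthogonal denominator by its inverse-moment bound $\E[1/((a+z_\parallel)^2+\norm{z_\perp}^2)]\le 1/((a+z_\parallel)^2+d-3)$—overestimates by a factor of order $\sqrt{d}$, because the event $\{R^2<a^2\}$ forces $\norm{z_\perp}^2$ to be atypically small. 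The correct order $1/d$, attained at the worst case $a\asymp d$, emerges only when the parallel denominator and the one-sided truncation on $\norm{z_\perp}^2$ are kept simultaneously. I therefore expect the decisive step to be a careful partition of the $z_\parallel$-integral—bounding the denominator by $(a+z_\parallel)^2\sigma^2$ where $|a+z_\parallel|$ is large and by $\sigma^2\norm{z_\perp}^2$ (using the $\chi^2_{d-1}$ inverse moment together with the truncation) where it is small—so that the supremum over $a$ is pinned at $1/(d-3)$. Everything preceding this estimate is routine bookkeeping; this uniform control is where the real difficulty lies.
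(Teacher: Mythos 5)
Your reduction is correct as far as it goes, and it parallels the paper's argument: both strip $f$ with the sandwich $\frac{L}{2}\norm{x}^2 \le f(x) \le \frac{U}{2}\norm{x}^2$ to pull out the factor $U/L$, and both arrive at the purely Gaussian claim
\begin{equation}
\E\left[\left(\frac{\norm{m}^2}{\norm{m+\sigma z}^2}-1\right)^{+}\right]\le \frac{1}{d-3}
\quad\text{uniformly in } \norm{m}/\sigma .
\end{equation}
But this claim is exactly the step you leave unproven: you explain why two naive simplifications fail and then state that you \emph{expect} a careful partition of the $z_\parallel$-integral to yield the uniform bound, without carrying it out. That is a genuine gap --- the entire content of the lemma sits in that estimate, and a proof that ends with ``this uniform control is where the real difficulty lies'' has not established it.

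The difficulty you anticipate dissolves with one algebraic observation, which is how the paper handles it: complete the square in $\sigma$. Writing $\norm{m+\sigma z}^2 = \norm{m}^2 + 2\sigma\inner{m}{z} + \sigma^2\norm{z}^2$ and noting
\begin{equation}
2\sigma\inner{m}{z} + \sigma^2\norm{z}^2
= \norm{z}^2\left(\sigma + \frac{\inner{m}{z}}{\norm{z}^2}\right)^2 - \frac{\inner{m}{z}^2}{\norm{z}^2}
\ \ge\ - \frac{\inner{m}{z}^2}{\norm{z}^2},
\end{equation}
you obtain, pointwise in $z$ and uniformly in $\sigma$ (equivalently in your $a$),
\begin{equation}
\frac{\norm{m}^2}{\norm{m+\sigma z}^2}
\le \left(1 - \frac{\inner{m}{z}^2}{\norm{m}^2\norm{z}^2}\right)^{-1}
= 1 + \frac{z_\parallel^2}{\norm{z_\perp}^2}
\end{equation}
in your notation. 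Hence $\left(\norm{m}^2/\norm{m+\sigma z}^2 - 1\right)^{+} \le z_\parallel^2/\norm{z_\perp}^2$, and since $z_\parallel^2\sim\chi^2_1$ and $\norm{z_\perp}^2\sim\chi^2_{d-1}$ are independent,
\begin{equation}
\E\left[\frac{z_\parallel^2}{\norm{z_\perp}^2}\right]
= \E\left[z_\parallel^2\right]\cdot\E\left[\frac{1}{\norm{z_\perp}^2}\right]
= 1\cdot\frac{1}{d-3},
\end{equation}
which is the $F(1,d-1)$ moment the paper invokes and is exactly where the hypothesis $d>3$ enters. So no partition of the $z_\parallel$-axis and no truncation analysis is needed: the worst case over $\sigma$ is eliminated in one line by discarding a perfect square, and your outline, completed this way, becomes precisely the paper's proof.
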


\subsection{Success probability}

The success probability is the probability of sampling a candidate solution $x_t$ with $f(x_t) \leq f(m_t)$, that is, 
\begin{equation}
\Pr[f(x_t) \leq f(m_t)] = \Pr[f(m_t + \sigma_t \cdot z_t) \leq f(m_t)].
\end{equation}
Analyzing the success probability is important to evaluate the upper bound of the expected log-progress provided in \Cref{lemma:qualitygain}.

The following lemma derives an upper bound and a lower bound of the success probability.
The proof is provided in \Cref{apdx:lemma:successprobability}.
\begin{lemma}[Bounds of success probability]\label{lemma:successprobability}
Define the normalized step-size: 
\begin{equation}
\bar\sigma = \sigma\cdot\E[\mathcal{Q}_z(\theta)]/\norm{\nabla f(m)}
\enspace.
\end{equation}
Let $f$ be a function that satisfies \Cref{asm:L_U_2diff}.
Let $z \in \R^d$ be $\mathcal{N}(0, I)$-distributed.
Then, for any $\theta\in\Thetazero$, and $\epsilon > 0$,
\begin{multline}
\Phi\left( - \frac12 \bar\sigma\cdot(1+\epsilon)\right)
  - \frac{1}{\epsilon^2} \cdot \mathcal{V}_\mathrm{std}
\leq 
\Pr\left[f(m + \sigma z) \leq f(m) \right]
\\
\leq 
\Phi\left( - \frac12 \bar\sigma\cdot(1-\epsilon)\right)
  + \frac{1}{\epsilon^2} \cdot \mathcal{V}_\mathrm{std}
.
\label{eq:lemma:successprobability}
\end{multline}
\end{lemma}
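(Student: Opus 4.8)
The plan is to rewrite the success event exactly in terms of the one-dimensional marginal $z_e$ and the random quantity $\mathcal{Q}_z(\theta)$, and then to replace $\mathcal{Q}_z(\theta)$ by its mean at the price of a concentration error that Chebyshev's inequality bounds by $\mathcal{V}_\mathrm{std}/\epsilon^2$. Denote the success event by $E_{\mathrm s} := \{f(m+\sigma z)\leq f(m)\}$. By the definition \eqref{eq:q-def}, the candidate value obeys $f(m+\sigma z) - f(m) = \inner{\nabla f(m)}{\sigma z} + \tfrac{\sigma^2}{2}\mathcal{Q}_z(\theta)$, so dividing the inequality defining $E_{\mathrm s}$ by the positive number $\sigma\norm{\nabla f(m)}$ (positive since $\theta\in\Thetazero$ forces $\nabla f(m)\neq 0$) shows that $E_{\mathrm s}$ is exactly the event $\{z_e \leq -\tfrac{\sigma}{2\norm{\nabla f(m)}}\mathcal{Q}_z(\theta)\}$. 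Because $z\sim\mathcal{N}(0,I)$, the projection $z_e$ is standard normal, so if $\mathcal{Q}_z(\theta)$ were deterministically equal to $\E[\mathcal{Q}_z(\theta)]$ the success probability would be precisely $\Phi(-\bar\sigma/2)$ by the definition of $\bar\sigma$.

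To control the genuine randomness of $\mathcal{Q}_z(\theta)$, I would introduce the concentration event $A_\epsilon := \{\abs{\mathcal{Q}_z(\theta) - \E[\mathcal{Q}_z(\theta)]} \leq \epsilon\,\E[\mathcal{Q}_z(\theta)]\}$. Chebyshev's inequality together with the definition of $\mathcal{V}_\mathrm{std}$ gives $\Pr[A_\epsilon^c] \leq \mathcal{V}_\mathrm{std}/\epsilon^2$. Since $\mathcal{Q}_z(\theta)>0$ lies within a relative factor $\epsilon$ of its mean on $A_\epsilon$, the random threshold $-\tfrac{\sigma}{2\norm{\nabla f(m)}}\mathcal{Q}_z(\theta)$ is sandwiched between $-\tfrac12\bar\sigma(1+\epsilon)$ and $-\tfrac12\bar\sigma(1-\epsilon)$ on that event.

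The two bounds then follow from elementary set inclusions. For the upper bound, on $A_\epsilon$ the threshold is at most $-\tfrac12\bar\sigma(1-\epsilon)$, so $E_{\mathrm s}\cap A_\epsilon \subseteq \{z_e \leq -\tfrac12\bar\sigma(1-\epsilon)\}$; splitting $\Pr[E_{\mathrm s}] = \Pr[E_{\mathrm s}\cap A_\epsilon] + \Pr[E_{\mathrm s}\cap A_\epsilon^c]$, bounding the first summand by $\Phi(-\tfrac12\bar\sigma(1-\epsilon))$ and the second by $\Pr[A_\epsilon^c]\leq \mathcal{V}_\mathrm{std}/\epsilon^2$ yields the stated inequality. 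For the lower bound, on $A_\epsilon$ the threshold is at least $-\tfrac12\bar\sigma(1+\epsilon)$, so $\{z_e \leq -\tfrac12\bar\sigma(1+\epsilon)\}\cap A_\epsilon \subseteq E_{\mathrm s}$, whence $\Pr[E_{\mathrm s}] \geq \Pr[\{z_e\leq -\tfrac12\bar\sigma(1+\epsilon)\}\cap A_\epsilon] \geq \Phi(-\tfrac12\bar\sigma(1+\epsilon)) - \mathcal{V}_\mathrm{std}/\epsilon^2$, where the last step uses $\Pr[B\cap A_\epsilon]\geq \Pr[B]-\Pr[A_\epsilon^c]$.

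The point that requires care is that $z_e$ and $\mathcal{Q}_z(\theta)$ are \emph{not} independent: $\mathcal{Q}_z(\theta)$ depends on the whole vector $z$, including its gradient component $z_e$. This rules out the naive approach of conditioning on the value of $\mathcal{Q}_z(\theta)$ and treating $z_e$ as an independent standard normal. The resolution is exactly the set-inclusion argument above, which never invokes independence; it uses only that $z_e$ is marginally standard normal (so $\Pr[z_e\leq c]=\Phi(c)$) together with the Bonferroni-type bounds $\Pr[E_{\mathrm s}\cap A_\epsilon^c]\leq \Pr[A_\epsilon^c]$ and $\Pr[B\cap A_\epsilon]\geq \Pr[B]-\Pr[A_\epsilon^c]$. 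Only the estimate on $\Pr[A_\epsilon^c]$ touches the second moment of $\mathcal{Q}_z(\theta)$, and this is ultimately where the quantitative control on $\mathcal{V}_\mathrm{std}$ supplied by \Cref{lemma:variance} enters the later analysis.
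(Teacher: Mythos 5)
Your proposal is correct and follows essentially the same route as the paper's own proof: both rewrite the success event as $\{z_e \leq -\tfrac{\bar\sigma}{2}\cdot\mathcal{Q}_z(\theta)/\E[\mathcal{Q}_z(\theta)]\}$, apply Chebyshev's inequality to get the $\mathcal{V}_\mathrm{std}/\epsilon^2$ concentration error, and conclude via intersection bounds using only the marginal standard normality of $z_e$ (the paper phrases the lower bound by bounding the failure probability from above, which is just the complement of your Bonferroni step). Your explicit remark that no independence between $z_e$ and $\mathcal{Q}_z(\theta)$ is needed is exactly the point that makes the paper's argument (and yours) work.
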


The main message of \Cref{lemma:successprobability} is that the success probability is approximated by $ \Phi\left( - \frac12 \bar\sigma\right)$ if $\mathcal{V}_\mathrm{std} \ll 1$.
Later in \eqref{eq:tr_cond_asymptotic}, $\mathcal{V}_\mathrm{std} \ll 1$ is proved for the limit $d \to \infty$ for functions of interest.
Therefore, we can bound the success probability by using the normalized step-size $\bar\sigma$. 
The following corollary provides sufficient conditions on the normalized step-size to bound the success probability from above and below.
Its proof is provided in \Cref{apdx:cor:successprobability}.
\begin{corollary}%[Normalized $\sigma$ for a constant bound of success probability]
\label{cor:successprobability}
Let $f$ be a function which satisfies \Cref{asm:L_U_2diff}.
Let $z \in \R^d$ be $\mathcal{N}(0, I)$-distributed.
Suppose that $\theta \in \Thetazero$.

Define $B_\theta^\mathrm{high} : \left(0, \frac12\right) \to \R_{> 0}$ as
\begin{equation}
B_\theta^\mathrm{high}(q) := \sup_{
{\epsilon\in\left(\sqrt{ \frac{ 2 }{1 - 2q} \cdot \mathcal{V}_\mathrm{std} }, \infty\right)}
}
\frac{ 2 \Phi^{-1} \left(1 - \left(q + \frac{1}{\epsilon^2} \cdot \mathcal{V}_\mathrm{std}\right) \right)}{ 1 + \epsilon} .
\label{eq:cor:b_high}
\end{equation}
Then, 
$B_\theta^\mathrm{high}(q)$ is positive for any $q\in(0, \frac12)$ and $\theta\in\Thetazero$, 
and $B_\theta^\mathrm{high}(q) \leq 2 \Phi^{-1}(1-q)$ for all $q \in \left(0, \frac12\right)$.
Moreover, 
\begin{equation}
\bar\sigma
\leq B_\theta^\mathrm{high}(q)
\Rightarrow
\Pr\left[f(m + \sigma z) \leq f(m) \right] > q
.
\label{eq:cor:successprobability:large}
\end{equation}

Suppose that $\mathcal{V}_\mathrm{std}<1/2$.%$\Var[\mathcal{Q}_z(\theta)] < \E[\mathcal{Q}_z(\theta)]^2 / 2$.
Define $B_\theta^\mathrm{low}: \left( \mathcal{V}_\mathrm{std} , \frac{1}{2}\right) \to \R_{> 0}$ as
\begin{equation}
B_\theta^\mathrm{low}(q) := \inf_{
{\epsilon\in\left(\sqrt{ \frac{ 1 }{q} \cdot \mathcal{V}_\mathrm{std} }, 1\right)}
}
\frac{ 2 \Phi^{-1} \left(1 - \left(q - \frac{1}{\epsilon^2} \cdot \mathcal{V}_\mathrm{std}\right) \right)}{ 1 - \epsilon} .
\label{eq:cor:b_low}
\end{equation}
Then,
$B_\theta^\mathrm{low}(q)$ is positive for any $q\in\left( \mathcal{V}_\mathrm{std} , \frac{1}{2}\right)$ and $\theta\in\Thetazero$,
and $B_\theta^\mathrm{low}(q) \geq 2 \Phi^{-1}(1-q)$ for all $q \in \left( \mathcal{V}_\mathrm{std} , \frac{1}{2}\right)$.
Moreover,
\begin{equation}
\bar\sigma
\geq B_\theta^\mathrm{low}(q)
\Rightarrow
\Pr\left[f(m + \sigma z) \leq f(m) \right] < q
.
\label{eq:cor:successprobability:small}
\end{equation}
\end{corollary}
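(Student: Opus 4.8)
The plan is to read \Cref{cor:successprobability} as a pointwise inversion of the two Chebyshev-type estimates of \Cref{lemma:successprobability}, followed by an optimization over the free parameter $\epsilon$. Fix $\theta\in\Thetazero$ and the target $q$. For the upper threshold I would start from the lower bound $\Pr[f(m+\sigma z)\le f(m)]\ge\Phi(-\tfrac12\bar\sigma(1+\epsilon))-\tfrac{1}{\epsilon^2}\mathcal{V}_\mathrm{std}$ and ask, for each admissible $\epsilon$, which $\bar\sigma$ force the right-hand side above $q$. Requiring $\Phi(-\tfrac12\bar\sigma(1+\epsilon))>q+\tfrac{1}{\epsilon^2}\mathcal{V}_\mathrm{std}$, applying the strictly increasing $\Phi^{-1}$, and using the reflection identity $\Phi^{-1}(1-y)=-\Phi^{-1}(y)$ yields exactly $\bar\sigma<b(\epsilon):=\frac{2\Phi^{-1}(1-(q+\frac{1}{\epsilon^2}\mathcal{V}_\mathrm{std}))}{1+\epsilon}$. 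Hence $\bar\sigma<b(\epsilon)$ for some admissible $\epsilon$ suffices for $\Pr[\cdot]>q$, and taking the supremum over $\epsilon$ gives the weakest such condition, which is precisely $B_\theta^\mathrm{high}(q)$; this is \eqref{eq:cor:successprobability:large}. The symmetric computation starting from the upper bound of \Cref{lemma:successprobability} produces $\tilde b(\epsilon):=\frac{2\Phi^{-1}(1-(q-\frac{1}{\epsilon^2}\mathcal{V}_\mathrm{std}))}{1-\epsilon}$ and the infimum $B_\theta^\mathrm{low}(q)$, giving \eqref{eq:cor:successprobability:small}.

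Next I would justify the admissible ranges of $\epsilon$ in \eqref{eq:cor:b_high} and \eqref{eq:cor:b_low}, which are forced by keeping the arguments of $\Phi^{-1}$ inside $(0,1)$ and the thresholds positive. In the high case the numerator $\Phi^{-1}(1-(q+\frac{1}{\epsilon^2}\mathcal{V}_\mathrm{std}))$ is positive iff $q+\frac{1}{\epsilon^2}\mathcal{V}_\mathrm{std}<\tfrac12$, which rearranges to $\epsilon>\sqrt{\frac{2}{1-2q}\mathcal{V}_\mathrm{std}}$, while $1+\epsilon>0$ automatically; this pins down the lower endpoint. In the low case the argument must satisfy $q-\frac{1}{\epsilon^2}\mathcal{V}_\mathrm{std}>0$, i.e. $\epsilon>\sqrt{\frac{1}{q}\mathcal{V}_\mathrm{std}}$, and $\epsilon<1$ keeps $1-\epsilon>0$; the interval is nonempty precisely because $\mathcal{V}_\mathrm{std}<q$ on the stated domain $q\in(\mathcal{V}_\mathrm{std},\tfrac12)$.

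The comparison bounds $B_\theta^\mathrm{high}(q)\le 2\Phi^{-1}(1-q)$ and $B_\theta^\mathrm{low}(q)\ge 2\Phi^{-1}(1-q)$ are then immediate monotonicity estimates. For the high case, dropping the nonnegative $\frac{1}{\epsilon^2}\mathcal{V}_\mathrm{std}$ gives $\Phi^{-1}(1-(q+\frac{1}{\epsilon^2}\mathcal{V}_\mathrm{std}))<\Phi^{-1}(1-q)$, and since this is positive while $1+\epsilon>1$, each $b(\epsilon)<2\Phi^{-1}(1-q)$, so the supremum inherits the bound. For the low case the same two facts run the other way: $\Phi^{-1}(1-(q-\frac{1}{\epsilon^2}\mathcal{V}_\mathrm{std}))>\Phi^{-1}(1-q)>0$ and $0<1-\epsilon<1$, so each $\tilde b(\epsilon)>2\Phi^{-1}(1-q)$ and the infimum inherits the bound.

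The remaining work — positivity, attainment of the extrema, and the strictness of the conclusions — rests on a boundary analysis that I expect to be the main obstacle. For $b(\epsilon)$ I would show $b\to 0$ at both endpoints (at the lower endpoint $q+\frac{1}{\epsilon^2}\mathcal{V}_\mathrm{std}\to\tfrac12$ drives $\Phi^{-1}\to 0$, and as $\epsilon\to\infty$ the denominator diverges while the numerator stays bounded), so the positive continuous $b$ attains its supremum at an interior $\epsilon^\star$ and $B_\theta^\mathrm{high}(q)=b(\epsilon^\star)>0$. For $\tilde b(\epsilon)$ the roles reverse: $\tilde b\to+\infty$ at both endpoints (the numerator blows up as $\epsilon\downarrow\sqrt{\mathcal{V}_\mathrm{std}/q}$ and the denominator vanishes as $\epsilon\uparrow 1$), so the positive continuous $\tilde b$ attains a positive interior minimum, giving $B_\theta^\mathrm{low}(q)>0$; note that positivity of an infimum, unlike that of a supremum, genuinely needs this blow-up. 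For any $\bar\sigma$ strictly below (resp.\ above) the threshold the inversion of the first paragraph runs with strict inequalities — the strictness coming from the strictly increasing $\Phi$ — yielding $\Pr[\cdot]>q$ (resp.\ $<q$); the knife-edge $\bar\sigma=B_\theta^\mathrm{high}(q)$ (resp.\ $=B_\theta^\mathrm{low}(q)$) is then closed using the attained extremizer together with the strict Chebyshev estimate behind \Cref{lemma:successprobability}, which is strict because $\mathcal{Q}_z(\theta)$ is non-degenerate. Verifying these limiting behaviors and securing the boundary strictness cleanly is the delicate part of the argument.
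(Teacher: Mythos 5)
Your proposal follows essentially the same route as the paper's proof: invert the two Chebyshev-type bounds of \Cref{lemma:successprobability} pointwise in $\epsilon$ (with the same admissible ranges), establish that the supremum and infimum are attained at interior extremizers via the boundary limits (the high-case ratio tends to $0$ at both endpoints, the low-case ratio blows up at both endpoints), and obtain positivity and the comparison bounds $B_\theta^\mathrm{high}(q) \leq 2\Phi^{-1}(1-q) \leq B_\theta^\mathrm{low}(q)$ by the same monotonicity observations. If anything, your handling of the knife-edge case $\bar\sigma = B_\theta^\mathrm{high}(q)$ (attained extremizer combined with strictness of the Chebyshev estimate for the non-degenerate $\mathcal{Q}_z(\theta)$) is more explicit than the paper's, which simply applies its chain of implications at the attained extremizer $\epsilon^\mathrm{high}(q)$ (resp.\ $\epsilon^\mathrm{low}(q)$) without dwelling on the strict inequality.
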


Finally, we arrange a tool to bound the success probability by some constants. The state space $\Thetazero$ is divided into three non-exclusive subsets based on the normalized step-size $\bar\sigma$.
The proof is provided in \Cref{apdx:lemma:cases}.
\begin{lemma}[Three cases of $\sigma$]\label{lemma:cases}
Let $f$ be a function which satisfies \Cref{asm:L_U_2diff}.
Let $z \in \R^d$ be $\mathcal{N}(0, I)$-distributed.
Let $B_\theta^\mathrm{high}$ and $B_\theta^\mathrm{low}$ be defined in \Cref{cor:successprobability}.
Define $B_{\inf}^\mathrm{high}(q) = \underset{\theta\in\Thetazero}{\inf} \left\{ B_\theta^\mathrm{high}(q) \right\}$ and $B_{\sup}^\mathrm{low} = \underset{\theta\in\Thetazero}{\sup} \left\{ B_\theta^\mathrm{low}(q) \right\}$.
Then, the following statements hold.

(i) The open interval $I_q\subseteq (0, 1/2)$ defined as 
\begin{multline}
I_q := \left(
\max\left\{
\underset{\theta\in\Thetazero}{\sup}
\left\{\mathcal{V}_\mathrm{std}\right\}
,
\right.
\right.
\\
\left.
\left.
\sup\left\{
q\in\left(0, \frac12\right):
\kappa_{\inf}
\cdot
\sqrt{\frac{2}{\pi}} \leq 
B_{\sup}^\mathrm{low}(q) 
\right\}
\right\}
, \frac{1}{2} 
\right)
\label{eq:def_Iq}
\end{multline}
is nonempty.

(ii) For any $q^\mathrm{low}\in I_q$, the open interval $I_q^{\mathrm{high}}(q^\mathrm{low}) \subseteq (0, 1/2)$ defined as
\begin{multline}
I_q^{\mathrm{high}}(q^\mathrm{low})
\\
:=
\left(
\underset{\theta\in\Thetazero}{\sup}
\left\{
\sup
\left\{
q :
B_\theta^\mathrm{low}(q^\mathrm{low}) 
\leq \frac{\aup}{\adown}\cdot B_\theta^\mathrm{high}(q)
\right\}
\right\}, \frac12 \right)
\label{eq:def_qhigh}
\end{multline}
is nonempty.
% {\color{red}TBD: Old version below:
% For any $q^\mathrm{low}\in I_q$, 
%   \begin{equation}
%     q^\mathrm{high} := 
%     \underset{\theta\in\Theta\setminus\{\theta|m\neq x^*\}}{\sup}
%     \left\{
%     \inf
%     \left\{
%     q \in \left(q^\mathrm{low}, \frac12 \right) :
%     B_\theta^\mathrm{low}(q^\mathrm{low}) 
%     > \frac{\aup}{\adown}\cdot B_\theta^\mathrm{high}(q)
%     \right\}
%     \right\}
%     \label{eq:def_qhigh}
%   \end{equation}
% exists in $\left(q^\mathrm{low}, \frac12 \right)$.}

(iii)
\begin{equation}
Q := 
\inf \left\{ q : 
%Q_\theta : 
B_{\inf}^\mathrm{high}(q)
%B_\theta^\mathrm{high}(Q_\theta) 
< 
B_{\sup}^\mathrm{low}(q^\mathrm{low})
\right\}
>0
\label{eq:def_Q}
\end{equation}
exists
for any $q^\mathrm{low}\in I_q$.

%\end{enumerate}
Let $q^\mathrm{low} \in I_q$ and $q^\mathrm{high} \in I_q^\mathrm{high}(q^\mathrm{low})$. Then, the following statements hold.
\begin{align}
  \text{(iv) }
  &\Pr\left[f(m + \sigma z) \leq f(m) \right] \geq q^\mathrm{high} 
  \ \text{if}\ \bar\sigma < B_{\inf}^\mathrm{high}(q^\mathrm{high}). \label{eq:lemma:toosmall}  
  \\
  \text{(v) }
  &\Pr\left[f(m + \sigma z) \leq f(m) \right] \leq q^\mathrm{low} 
  \ \text{if}\  \bar\sigma > B_{\sup}^\mathrm{low}(q^\mathrm{low}). \label{eq:lemma:toolarge}
  \\
  \text{(vi) }
  &\Pr\left[f(m + \sigma z) \leq f(m) \right] \geq Q 
  \ \text{if}\ \bar\sigma \leq B_{\sup}^\mathrm{low}(q^\mathrm{low}). \label{eq:lemma:reasonable}
\end{align}

\end{lemma}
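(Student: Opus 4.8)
The plan is to treat the statement as two groups. Claims (iv)--(vi) are implications about the success probability that follow almost mechanically from \Cref{cor:successprobability} once the infimum/supremum quantities are read correctly, so I would dispatch them first. Claims (i)--(iii) assert nonemptiness of the two intervals and positivity of $Q$; this is where \Cref{asm:tr_cond} does its work, and where I expect the real effort to lie.

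For the implications I would argue as follows. For (iv), since $\bar\sigma < B_{\inf}^\mathrm{high}(q^\mathrm{high}) = \inf_{\theta}B_\theta^\mathrm{high}(q^\mathrm{high}) \leq B_\theta^\mathrm{high}(q^\mathrm{high})$ at the current $\theta$, the implication \eqref{eq:cor:successprobability:large} gives success probability $> q^\mathrm{high}$. For (v), $\bar\sigma > B_{\sup}^\mathrm{low}(q^\mathrm{low}) = \sup_\theta B_\theta^\mathrm{low}(q^\mathrm{low}) \geq B_\theta^\mathrm{low}(q^\mathrm{low})$, and \eqref{eq:cor:successprobability:small} gives success probability $< q^\mathrm{low}$; here I use that $q^\mathrm{low}\in I_q$ forces $q^\mathrm{low} > \sup_\theta\mathcal{V}_\mathrm{std}$, so $q^\mathrm{low}$ lies in the admissible range $(\mathcal{V}_\mathrm{std},1/2)$ of $B_\theta^\mathrm{low}$ for every $\theta$. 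For (vi), I first note that $B_{\inf}^\mathrm{high}$ is nonincreasing in $q$, since the argument of $\Phi^{-1}$ in \eqref{eq:cor:b_high} decreases with $q$; hence $\{q: B_{\inf}^\mathrm{high}(q) < B_{\sup}^\mathrm{low}(q^\mathrm{low})\}$ is an upper interval with infimum $Q$, so for every $q < Q$ we have $\bar\sigma \leq B_{\sup}^\mathrm{low}(q^\mathrm{low}) \leq B_{\inf}^\mathrm{high}(q) \leq B_\theta^\mathrm{high}(q)$, whence \eqref{eq:cor:successprobability:large} yields success probability $> q$, and letting $q \uparrow Q$ gives $\geq Q$.

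The structural claims reduce to controlling the limits of $B_\theta^\mathrm{high}$ and $B_\theta^\mathrm{low}$ as $q\to 0$ and $q\to 1/2$, uniformly in $\theta$. The two levers are the sandwich bounds $B_\theta^\mathrm{high}(q)\leq 2\Phi^{-1}(1-q)$ and $B_\theta^\mathrm{low}(q)\geq 2\Phi^{-1}(1-q)$ from \Cref{cor:successprobability}, together with the uniform smallness $\sup_\theta\mathcal{V}_\mathrm{std} < 1/8$ that is immediate from \Cref{asm:tr_cond}. For (i), $\sup_\theta\mathcal{V}_\mathrm{std}<1/2$ is then immediate; to bound the second term in \eqref{eq:def_Iq} I would upper bound $B_\theta^\mathrm{low}(q)$ uniformly by evaluating its defining infimum \eqref{eq:cor:b_low} at the single choice $\epsilon = 1/2$, obtaining $B_{\sup}^\mathrm{low}(q)\leq 4\,\Phi^{-1}(1 - q + 4\sup_\theta\mathcal{V}_\mathrm{std})$; as $q\uparrow 1/2$ this tends to $4\,\Phi^{-1}(\tfrac12 + 4\sup_\theta\mathcal{V}_\mathrm{std})$, which is strictly below $\kappa_{\inf}\sqrt{2/\pi}$ precisely because the first branch of the min in \eqref{eq:tr_cond} reads $\sup_\theta\mathcal{V}_\mathrm{std} < \tfrac14(\Phi(\tfrac{\kappa_{\inf}}{2}\tfrac{1}{\sqrt{2\pi}})-\tfrac12)$, so the set in \eqref{eq:def_Iq} has supremum $<1/2$. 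For (ii), using $B_\theta^\mathrm{high}(q)\leq 2\Phi^{-1}(1-q)\to 0$ and $B_\theta^\mathrm{low}(q^\mathrm{low})\geq 2\Phi^{-1}(1-q^\mathrm{low})>0$, the inequality $B_\theta^\mathrm{low}(q^\mathrm{low})\leq \tfrac{\aup}{\adown}B_\theta^\mathrm{high}(q)$ forces $q\leq 1-\Phi(\tfrac{\adown}{\aup}\Phi^{-1}(1-q^\mathrm{low}))$, a bound $<1/2$ uniform in $\theta$ since $\adown/\aup<1$; the supremum over $\theta$ stays below $1/2$. For (iii), existence of $Q$ follows because $B_{\inf}^\mathrm{high}(q)\to 0 < B_{\sup}^\mathrm{low}(q^\mathrm{low})$ as $q\uparrow1/2$ makes the defining set nonempty, while $q^\mathrm{low}\in I_q$ forces $B_{\sup}^\mathrm{low}(q^\mathrm{low}) < \kappa_{\inf}\sqrt{2/\pi}$; to get $Q>0$ I would lower bound $B_{\inf}^\mathrm{high}(q)$ uniformly, again at $\epsilon=1/2$, by $\tfrac43\Phi^{-1}(1-q-4\sup_\theta\mathcal{V}_\mathrm{std})$, whose value at $q=0$ exceeds $\kappa_{\inf}\sqrt{2/\pi}$ exactly under the second branch $\sup_\theta\mathcal{V}_\mathrm{std} < \tfrac14(1-\Phi(\tfrac{\kappa_{\inf}}{2}\tfrac{3}{\sqrt{2\pi}}))$ of \eqref{eq:tr_cond}; by monotonicity $B_{\inf}^\mathrm{high}(q)>B_{\sup}^\mathrm{low}(q^\mathrm{low})$ on a neighbourhood of $0$, so $Q>0$.

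The main obstacle I anticipate is the \emph{uniformity in} $\theta$: both $B_\theta^\mathrm{high}$ and $B_\theta^\mathrm{low}$ are themselves extrema over the auxiliary parameter $\epsilon$ and depend on $\theta$ only through $\mathcal{V}_\mathrm{std}$ (and, in the comparison underlying $I_q$, through $\kappa_{\inf}$). The art is to replace each $\theta$-dependent extremum by a single well-chosen $\epsilon$ — here $\epsilon=1/2$ in both the low and the high bound — and then to check that the resulting threshold on $\sup_\theta\mathcal{V}_\mathrm{std}$ lands exactly on one of the two branches of the minimum in \eqref{eq:tr_cond}. Getting the constants $\tfrac{1}{\sqrt{2\pi}}$ and $\tfrac{3}{\sqrt{2\pi}}$ to match (equivalently, tracking the factor produced by $1/(1\mp\epsilon)$ at $\epsilon=1/2$) is the delicate bookkeeping that makes \Cref{asm:tr_cond} precisely the hypothesis under which (i) and the positivity of $Q$ in (iii) hold.
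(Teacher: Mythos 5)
Your proposal is correct and takes essentially the same approach as the paper: claims (iv)--(vi) are read off from \Cref{cor:successprobability} via the definitions of $B_{\inf}^\mathrm{high}$ and $B_{\sup}^\mathrm{low}$, and claims (i)--(iii) are proved by replacing the $\theta$-dependent extrema over $\epsilon$ with an explicit admissible choice and comparing the resulting limits at $q \uparrow 1/2$ and $q \downarrow 0$ against the two branches of \eqref{eq:tr_cond}, exactly as the paper does. The only (immaterial) difference is that you fix $\epsilon = 1/2$ throughout, whereas the paper uses the $q$-dependent choices $\epsilon = 1/\sqrt{8q}$ for $B_\theta^\mathrm{low}$ and $\epsilon = 1/(2\sqrt{1-2q})$ for $B_\theta^\mathrm{high}$; these coincide with $\epsilon = 1/2$ in the respective limits $q \to 1/2$ and $q \to 0$, so both routes produce the same threshold constants $\frac{1}{\sqrt{2\pi}}$ and $\frac{3}{\sqrt{2\pi}}$ and the same conclusions.
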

% The condition on $f$ \eqref{eq:tr_cond} is prepared for the existence of $I_q, I_q^\mathrm{high}(q^\mathrm{low})$, and $Q$ defined in (i), (ii), and (iii) respectively.
% Subsequently, $q^\mathrm{low}, q^\mathrm{high}$ is prepared to control the success probability under or above $p_\mathrm{target}$, which is formally stated in \Cref{subsec:potentialfunction}.

% \yohe{talk about the relationship of three Bs.}

% \begin{table}[t]
%     \centering
%     \caption{Caption}
%     \label{tab:my_label}
%     \begin{tabular}{c|ccc}
%     \toprule
%         $\bar{\sigma}$ & 
%         $< B_{\mathrm{inf}}^{\mathrm{high}}(q^{\mathrm{high}})$ &
%         $\leq B_{\mathrm{sup}}^{\mathrm{low}}(q^{\mathrm{low}})$ &
%         $>B_{\mathrm{sup}}^{\mathrm{low}}(q^{\mathrm{low}})$ \\
%     \midrule 
%         $\Pr[f(m+\sigma\cdot z) \leq f(m)]$ &
%         $\geq q^\mathrm{high}$ &
%         $\geq Q$ &
%         $\leq q^\mathrm{low}$ \\
%     \bottomrule
%     \end{tabular}
%     \begin{tabular}{c|ccc}
%     \toprule
%         $\sigma$ & 
%         $< \frac{s \cdot \sqrt{ L f(m_t) }}{ \aup\cdot E_\mathcal{Q}}$ &
%          &
%         $>\frac{\ell \cdot \norm{\nabla f(m_t)} }{ \sqrt{2}\adown\cdot E_{z_t}[\mathcal{Q}_{z_t}(\theta_t)]}$ \\
%     \midrule 
%         $\Pr[f(m+\sigma\cdot z) \leq f(m)]$ &
%         $\geq q^\mathrm{high}$ &
%         $\geq Q$ &
%         $\leq q^\mathrm{low}$ \\
%     \bottomrule
%     \end{tabular}
% \end{table}

\section{Convergence Rate}

Our main results, an upper bound of the upper convergence rate and a lower bound of the lower convergence rate of the (1+1)-ES (\Cref{def:algorithm}) on objective function $h \in \mathcal{P}_{d, L, U}^{\gamma}$ (\Cref{def:problem}), are proved in this section. 
The proofs rely on the strong law of large numbers for martingale summarized in \Cref{prop:convergence_rate}.
The analysis of the convergence rate comes down to the analysis of a single-step behavior of the (1+1)-ES.
In light of \Cref{prop:f-norm}, it is sufficient to consider objective function $f$ satisfying \Cref{asm:L_U_2diff} and \Cref{asm:tr_cond} (i.e., $f \in \mathcal{S}_{d, L, U}$).

\subsection{Potential function}\label{subsec:potentialfunction}

The first step of the analysis of the upper convergence rate is to define a potential function of the state of the (1+1)-ES, $V:\Thetazero \to \R$, to measure the progress toward the optimum.
If we construct a potential function such that $\log(f(m))\leq V(\theta)$ holds for all $\theta \in \Thetazero$, then we have for any $\theta_0 \in \Thetazero$,
\begin{equation}
\underset{t\to\infty}{\limsup}\frac{1}{t}\log\left(\frac{f(m_t)}{f(m_0)}\right)
\leq
\underset{t\to\infty}{\limsup}\frac{1}{t}\log\left(\frac{V(\theta_t)}{V(\theta_0)}\right)
.
\end{equation}
Therefore, in light of \Cref{prop:f-norm}, we can obtain the upper convergence rate of the (1+1)-ES on $h \in \mathcal{T}^{\gamma}(f)$ by analyzing the convergence of $V(\theta_t)$. 
It results in analyzing the drift of $X_t = V(\theta_t)$ in light of \Cref{prop:convergence_rate}.

Before constructing a potential function, we explain the motivation of introducing a potential function instead of analyzing $\log(f(m))$ directly.
It has been known empirically and theoretically that the (1+1)-ES exhibits two different phases through the optimization on several functions \cite{akimoto2018drift,akimoto2020global,morinaga2019generalized}.
One is that the step-size $\sigma$ is well adapted and $\log(f(m))$ decreases steadily. 
The other is that the step-size $\sigma$ is on the way to a desirable range and $\log(f(m))$ has almost no progress subsequently.
In fact, when trying to evaluate the drift of $X_t = \log(f(m_t))$ using \Cref{lemma:qualitygain}, it is easy to see that the upper bound of the expected log-progress is non-negative if $\sigma / \norm{\nabla f(m)}$ is too large. 
Moreover, the expected log-progress diminishes if $\sigma / \norm{\nabla f(m)}$ approaches zero.
Hence, the drift of $X_t = \log(f(m_t))$ cannot be upper-bounded by a negative constant for all $\theta \in \Thetazero$. 
In related works \cite{akimoto2018drift,akimoto2020global,morinaga2019generalized}, this issue has been mitigated by constructing a potential function that penalizes the situation that the step-size is not well-adapted.

The potential function used in this work is defined below. The basic idea is borrowed from \cite{morinaga2019generalized,morinaga2021convergence}, but the constants are revised to obtain the desired convergence rate bound.

\begin{definition}[Potential function]\label{def:potential}
Consider the (1+1)-ES solving $f \in \mathcal{S}_{d,L,U}$ with $\aup$ and $\adown$ satisfying $p_\mathrm{target} \in I_q$, where $I_q$ is defined in \eqref{eq:def_Iq}.
Then, in light of \Cref{lemma:cases}, we can choose 
constants $q^\mathrm{low}\in I_q$ and $q^\mathrm{high}\in I_q^\mathrm{high}(q^\mathrm{low})$
so that  
\begin{align}
 q^\mathrm{low}<p_\mathrm{target}<q^\mathrm{high} .
 \label{eq:ptarget_condition}
\end{align}
For $\theta\in\Thetazero$, the potential function $V(\theta)$ for the (1+1)-ES solving $f$ is defined as
\begin{multline}
V(\theta)
= \log\left(f(m)\right) \\
+ v \cdot \log^+ \left(\frac{s\cdot\sqrt{Lf(m)}}{\sigma E_\mathcal{Q} }\right) 
+  v \cdot \log^+ \left(\frac{\sigma E_\mathcal{Q}}{\ell\cdot\sqrt{Lf(m)}}\right) ,
\label{eq:potential}
\end{multline}
where 
$\log^+(x) := \log(x) \cdot \ind{x\geq 1}$, 
$s = \sqrt{ 2 } \aup\cdot B_{\inf}^\mathrm{high}(q^\mathrm{high})$,
$\ell = \sqrt{2} \adown\cdot 
B_{\sup}^\mathrm{low}(q^\mathrm{low})$,
$E_\mathcal{Q} =
\sup_{\theta\in\Thetazero} \left\{\E[\mathcal{Q}_z(\theta)]\right\}$,
and
\begin{equation}
v = \min\left\{ \frac{w}{4 \log(\aup / \adown)} , 1 \right\}
\label{eq:v}
\end{equation}
with
% \begin{multline}
%     w = 
%     \frac{L}{E_\mathcal{Q}}
%     \cdot
%     \frac{
%     \underset{\theta\in\Theta\setminus\{\theta|m\neq x^*\}}{\inf}
%     \left\{
%     B_\theta^\mathrm{high}(q^\mathrm{high})
%     \right\}           
%     }{\kappa_{\inf}}
%     \\
%     \cdot
%     \left(
%     \sqrt{\frac{2}{\pi}}\cdot\kappa_{\inf}
%     -
%     \underset{\theta\in\Theta\setminus\{\theta|m\neq x^*\}}{\sup}
%     \left\{
%      B_\theta^\mathrm{low}(q^\mathrm{low})
%      \right\}
%     \right)
%     \cdot Q
%     >0
%   .
%   \label{eq:w}
% \end{multline}
\begin{equation}
w = 
\frac{L}{E_\mathcal{Q}}
\cdot
\frac{
B_{\inf}^\mathrm{high}(q^\mathrm{high})
}{\kappa_{\inf}}
\cdot
\left(
\sqrt{\frac{2}{\pi}}\cdot\kappa_{\inf}
-
B_{\sup}^\mathrm{low}(q^\mathrm{low})
\right)
\cdot Q .
\label{eq:w}
\end{equation}
%        \mori{$w_\theta$ is essentially an upper drift we seek for. When applying convergence rate theorem and drift theorem, we may want to have a constant upper drift which does not depend on $\theta$. One naive solution to eliminate dependency of $\theta$ is to take $\inf$ or $\sup$ of $B_\theta^\mathrm{low}, B_\theta^\mathrm{high}$ and $\E[\mathcal{Q}(\theta)]$ in $w_\theta$. I suppose that to take $\inf_\theta B_\theta^\mathrm{low}(q^\mathrm{low})$ and $\inf_\theta B_\theta^\mathrm{high}(q^\mathrm{high})$ probably does not affect the rate we expect, but we will lost some tightness of the rate if taking $\sup$ like $\sup_\theta \E[\mathcal{Q}_z(\theta)] = dU$. For now, I give up on the tightness of the rate $L/\Tr(\nabla f(\bar m))$ in the case of smooth storong convex function, and I will reflect after finishing the proof.}
%        \yohe{I see. Yes, tightness seems to be very difficult (as I guess the tightness for the first-order methods are also not established.) For our future work, I suggest to keep all lemmas as they are, and make a corollary of Lemma 4 where we take $\inf B^{high}$ and $\sup B^{low}$ and derive the corresponding results. }
\end{definition}

The difference of the potential function values in consecutive steps is bounded in several ways depending on the current state. 
The proof is provided in \Cref{apdx:lemma:potential_decrease}.
%\yohe{This lemma is very nice. I didn't know we can have such a nice lemma. I think this is quite helpful to understand the proof idea.}
%\mori{And also note that this lemma does not assume any property of $f$ :) }
\begin{lemma}[Potential bound]\label{lemma:potential_decrease}
Consider the (1+1)-ES solving $f \in \mathcal{S}_{d,L,U}$ with $\aup$ and $\adown$ satisfying $p_\mathrm{target} \in I_q$. Let $\{\theta_t\}_{t\geq 0}$ be a sequence of the parameters of the (1+1)-ES.
Suppose $\theta_t\in\Thetazero$. 
%Suppose $m_t \neq x^*$. 
Then,
\begin{equation}
V(\theta_{t+1}) - V(\theta_t)\leq 
\left(1-\frac{v}{2}\right)
\cdot \log\left(\frac{f(m_{t+1})}{f(m_t)}\right) + v\cdot\log\left(\frac{\aup}{\adown}\right)
,
\label{eq:general_potential_decrease}
\end{equation}
and
\begin{equation}
V(\theta_{t+1}) - V(\theta_t)
>
\left(1+v\right)\cdot\log\left(\frac{f(m_{t+1})}{f(m_t)}\right)
-2v\cdot\log\left(\frac{\aup}{\adown}\right)
.
\label{eq:constant_lower_bound_of_V}
\end{equation}
In addition, the following statements holds.
%\begin{enumerate}
%\renewcommand{\labelenumi}{(\roman{enumi})}

(i) If $\sigma_t < \frac{s \cdot \sqrt{ L f(m_t) }}{ \aup\cdot E_\mathcal{Q}}$, letting $\inddown := \ind{f(m_t + \sigma_t\cdot z_t) > f(m_t)}$, we have
\begin{equation}
V(\theta_{t+1}) - V(\theta_t) \leq
v \cdot\left( \log\left(\frac{\aup}{\adown}\right)\cdot\inddown - \log(\aup)\right)
.
\label{eq:small_sigma_potential_decrease}
\end{equation}
%\todo{Is this definition correct? Check this definition and the first equality in the proof (86--92).}

(ii) If $\sigma_t > \frac{\ell \cdot \sqrt{L f(m_t)} }{ \adown\cdot E_\mathcal{Q}}$, letting $\indup := \ind{f(m_t + \sigma_t\cdot z_t) \leq f(m_t)}$, we have
\begin{equation}
V(\theta_{t+1}) - V(\theta_t)\leq 
v \cdot\left( \log\left(\frac{\aup}{\adown}\right)\cdot\indup - \log\left(\frac{1}{\adown}\right)\right)
.
\label{eq:large_sigma_potential_decrease}
\end{equation}
%\end{enumerate}
\end{lemma}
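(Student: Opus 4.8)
The plan is to rewrite the potential as a function of two scalar quantities whose one-step evolution is transparent, and then dispatch the four claims by elementary case analysis on success versus failure, using convexity (Lipschitzness) of the step-size penalty. Abbreviate the log-progress $P_t := \log(f(m_{t+1})/f(m_t)) \leq 0$ and the logarithmic normalized step-size $u_t := \log\bigl(\sigma_t E_\mathcal{Q}/\sqrt{L f(m_t)}\bigr)$. Since $\log^{+}(x)=\max(\log x,0)$, the potential reads $V(\theta_t)=\log f(m_t)+v\,\Psi(u_t)$ with $\Psi(u):=(\log s-u)^{+}+(u-\log\ell)^{+}$. Reading off \Cref{algo}, a single step gives $\log f(m_{t+1})-\log f(m_t)=P_t$, and for $u$: on a success $u_{t+1}-u_t=\log\aup-\tfrac12 P_t$, while on a failure $P_t=0$ and $u_{t+1}-u_t=\log\adown$. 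A preliminary fact I will record is $\log\ell\geq\log s$: the choice $q^\mathrm{high}\in I_q^\mathrm{high}(q^\mathrm{low})$ places $q^\mathrm{high}$ above the threshold in \eqref{eq:def_qhigh}, and since $B_\theta^\mathrm{high}$ is nonincreasing in its argument this forces $B_\theta^\mathrm{low}(q^\mathrm{low})>\tfrac{\aup}{\adown}B_\theta^\mathrm{high}(q^\mathrm{high})$ for every $\theta$; taking suprema yields $\adown B_{\sup}^\mathrm{low}(q^\mathrm{low})\geq\aup B_{\inf}^\mathrm{high}(q^\mathrm{high})$, i.e. $\ell\geq s$. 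Consequently $\Psi$ is the distance from $u$ to the interval $[\log s,\log\ell]$, a piecewise-linear function with slopes in $\{-1,0,1\}$, hence $1$-Lipschitz.

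For the two general bounds \eqref{eq:general_potential_decrease} and \eqref{eq:constant_lower_bound_of_V}, I would write $V(\theta_{t+1})-V(\theta_t)=P_t+v\bigl(\Psi(u_{t+1})-\Psi(u_t)\bigr)$ and invoke $\abs{\Psi(u_{t+1})-\Psi(u_t)}\leq\abs{u_{t+1}-u_t}$. Inserting the success value $\abs{u_{t+1}-u_t}=\log\aup-\tfrac12 P_t$ and the failure value $\log(1/\adown)$ and simplifying yields both displays; for the lower bound the strict inequality comes from the slack $v\log\aup-2v\log\adown>0$, and $v\leq1$ is used nowhere here. Bound (ii) is the clean regime: the hypothesis $\sigma_t>\ell\sqrt{Lf(m_t)}/(\adown E_\mathcal{Q})$ reads $u_t>\log\ell-\log\adown$, so after a failure $u_{t+1}=u_t+\log\adown>\log\ell$, and a success only pushes $u$ further right; thus only the $(u-\log\ell)^{+}$ branch is ever active and $\Psi(u_{t+1})-\Psi(u_t)$ equals exactly $\log\aup-\tfrac12 P_t$ (success) or $\log\adown$ (failure). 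Substituting and using $v\leq1$ so that $1-\tfrac v2>0$ gives $V(\theta_{t+1})-V(\theta_t)\leq v\log\aup$ on success and equality $v\log\adown$ on failure, which is \eqref{eq:large_sigma_potential_decrease}.

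The hard part is regime (i), the small-$\sigma$ case $u_t<\log s-\log\aup$. On a failure the argument mirrors (ii) and in fact produces equality in \eqref{eq:small_sigma_potential_decrease}. The obstacle is the success case: although $\sigma_t$ is small, the log-progress $P_t$ can be arbitrarily negative along a single path (a mutation aligned with $-\nabla f(m_t)$ can nearly reach the optimum), so $u_{t+1}=u_t+\log\aup-\tfrac12 P_t$ may stay left of $\log s$, land inside $[\log s,\log\ell]$, or overshoot $\log\ell$, and the ``single active branch'' picture that makes (ii) clean breaks down. I would therefore split the success case into these three subcases and bound $\Psi(u_{t+1})-\Psi(u_t)$ from above in each, using $u_t<\log s-\log\aup$ together with $\log\ell\geq\log s$: the three bounds are $-\log\aup+\tfrac12 P_t$, a quantity below $-\log\aup$, and (invoking $v\leq1$ to keep $1-\tfrac v2>0$) a quantity below $-\log\aup-\tfrac12 P_t$. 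In all three, $V(\theta_{t+1})-V(\theta_t)\leq -v\log\aup$, which is exactly \eqref{eq:small_sigma_potential_decrease} with $\inddown=0$. Combining the success and failure conclusions in each regime, and the two general inequalities, completes the lemma; the only genuinely delicate accounting is this three-way split forced by the unboundedness of $P_t$ on successful steps.
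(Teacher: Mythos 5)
Your proof is correct, and although it lands on exactly the same pathwise bounds, its bookkeeping is genuinely different from the paper's. The paper never reduces to a scalar: it keeps the two $\log^+$ terms of $V$, introduces the \emph{post-step} indicators $\inds = \ind{s\sqrt{Lf(m_{t+1})} \geq E_\mathcal{Q}\cdot\sigma_{t+1}}$ and $\indl = \ind{\ell\sqrt{Lf(m_{t+1})} \leq E_\mathcal{Q}\cdot\sigma_{t+1}}$, and works from the algebraic decomposition \eqref{eq:v_decompose} with the terms $\Delta_s$, $\Delta_\ell$ of \eqref{eq:delta_s}--\eqref{eq:delta_l}; each claim is then a term-by-term sign analysis (in regime (i), showing $\Delta_\ell \leq 0$ and bounding $\Delta_s$), and the lower bound \eqref{eq:constant_lower_bound_of_V} requires the separate subadditivity trick $\log^+(ab) \leq \log^+(a) + \log^+(b)$. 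Your reduction to $u_t$ and the interval distance $\Psi$ unifies this: both general bounds \eqref{eq:general_potential_decrease} and \eqref{eq:constant_lower_bound_of_V} follow from the single $1$-Lipschitz estimate $\abs{\Psi(u_{t+1}) - \Psi(u_t)} \leq \abs{u_{t+1}-u_t}$ combined with the exact success/failure values of $u_{t+1}-u_t$, and your three-way landing analysis in regime (i) is precisely the geometric content of the paper's ``$\Delta_\ell \leq 0$ plus a bound on $\Delta_s$'' step, with the overshoot cases made visible rather than absorbed into indicator algebra. Two smaller observations. First, you actually derive $\ell \geq s$ from the definition \eqref{eq:def_qhigh} of $I_q^{\mathrm{high}}$, whereas the paper asserts $s < \ell$ without justification (it needs this for the exclusivity of $\inds$ and $\indl$; you need it so that $\Psi$ is a distance and hence $1$-Lipschitz), so your argument fills in a detail the paper leaves implicit. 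Second, the strict inequality in \eqref{eq:constant_lower_bound_of_V} — in your argument and in the paper's alike — tacitly uses $v > 0$, which holds because $w > 0$ whenever $p_\mathrm{target} \in I_q$; the paper only records this positivity later, in the proofs of \Cref{cor:expected_potential_decrease} and \Cref{theorem:lower}, so it deserves one explicit sentence in your write-up.
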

% Eq. \eqref{eq:small_sigma_potential_decrease} and \eqref{eq:large_sigma_potential_decrease} correpond to a situation that the 2nd term of $V(\theta)$ compensates for a stagnation of $\log(f(m))$, and a situation that the 3rd term do it.

Finally, we derive condition $C1$ of \Cref{prop:convergence_rate} for each of the three situations: ($i$) too small step-size situation, where the progress is made by increasing the step-size; ($ii$) too large step-size situation, where the progress is made by decreasing the step-size; and ($iii$) reasonable step-size situation, where the progress is made by moving toward the optimum.
Its proof is provided in \Cref{apdx:cor:expected_potential_decrease}.
\begin{corollary}[Expected potential decrease]\label{cor:expected_potential_decrease}
Consider the (1+1)-ES solving $f \in \mathcal{S}_{d,L,U}$ with $\aup$ and $\adown$ satisfying $p_\mathrm{target} \in I_q$. Let $\{\theta_t\}_{t\geq 0}$ be a sequence of the parameters of the (1+1)-ES.
Suppose $\theta_t \in\Thetazero$. 
%Suppose $m_t \neq x^*$. 
Then, the following statements hold.
% \begin{enumerate}
% \renewcommand{\labelenumi}{(\roman{enumi})}

(i) If $\sigma_t < \frac{s \cdot \sqrt{ L f(m_t) }}{ \aup\cdot E_\mathcal{Q}}$,
\begin{multline}
\E\left[V(\theta_{t+1}) - V(\theta_t) |\mathcal{F}_t\right]
\\
\leq
\min
\left\{
    \frac{w}{4}, \log\left(\frac{\aup}{\adown}\right)
\right\}
\cdot
    (p_\mathrm{target} - q^\mathrm{high})
<0
.
\label{eq:small_sigma_expected_potential_decrease}
\end{multline}

(ii) If $\sigma_t > \frac{\ell \cdot \norm{\nabla f(m_t)} }{ \sqrt{2}\adown\cdot \E_{z_t}[\mathcal{Q}_{z_t}(\theta_t)]}$,
\begin{multline}
\E\left[V(\theta_{t+1}) - V(\theta_t) |\mathcal{F}_t\right]
\\
\leq
\min
\left\{
    \frac{w}{4}, \log\left(\frac{\aup}{\adown}\right)
\right\}
\cdot
    (q^\mathrm{low} - p_\mathrm{target})
<0
.
\label{eq:large_sigma_expected_potential_decrease}
\end{multline}

(iii) If  $ \frac{s \cdot \sqrt{ L f(m_t) }}{ \aup\cdot E_\mathcal{Q}} \leq \sigma_t \leq \frac{\ell \cdot \norm{\nabla f(m_t)} }{ \sqrt{2}\adown\cdot \E_{z_t}[\mathcal{Q}_{z_t}(\theta_t)]}$,
\begin{equation}
\E\left[V(\theta_{t+1}) - V(\theta_t) |\mathcal{F}_t\right]
\leq -\frac{w}{4} 
<0
.
\label{eq:reasonable_sigma_expected_potential_decrease}
\end{equation}
%\end{enumerate}
\end{corollary}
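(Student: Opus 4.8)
The plan is to handle the three step-size regimes separately, in each case taking the conditional expectation of the matching one-step potential bound from \Cref{lemma:potential_decrease} and then controlling the success probability via \Cref{lemma:cases}. Two identities will be used repeatedly. From the definition \eqref{eq:p_target-def} of $p_\mathrm{target}$ we get $\log(\aup) = (1 - p_\mathrm{target})\log(\aup/\adown)$ and $\log(1/\adown) = p_\mathrm{target}\log(\aup/\adown)$; from \eqref{eq:v} and $\log(\aup/\adown) > 0$ we get $v\log(\aup/\adown) = \min\{w/4,\log(\aup/\adown)\}$, so in particular $v\log(\aup/\adown) \leq w/4$. I will also use the bound $\sqrt{2Lf(m_t)} \leq \norm{\nabla f(m_t)}$ stated below \Cref{asm:L_U_2diff} and $\E[\mathcal{Q}_z(\theta_t)] \leq E_\mathcal{Q}$ from \Cref{def:potential}.

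For the too-small regime (i), whose hypothesis coincides with that of \eqref{eq:small_sigma_potential_decrease}, I take $\E[\cdot\mid\mathcal{F}_t]$ of \eqref{eq:small_sigma_potential_decrease}, substitute $\E[\inddown\mid\mathcal{F}_t] = 1 - \Pr[f(m_t+\sigma_t z_t)\leq f(m_t)]$, and use the $p_\mathrm{target}$ identities to collapse the right-hand side to $v\log(\aup/\adown)\,(p_\mathrm{target}-\Pr[\text{success}])$. It then suffices to show $\Pr[\text{success}]\geq q^\mathrm{high}$: inserting $s=\sqrt{2}\aup B_{\inf}^\mathrm{high}(q^\mathrm{high})$ into the hypothesis and using $\norm{\nabla f(m_t)}\geq\sqrt{2Lf(m_t)}$ and $\E[\mathcal{Q}_z(\theta_t)]\leq E_\mathcal{Q}$ yields $\bar\sigma < B_{\inf}^\mathrm{high}(q^\mathrm{high})$, so \Cref{lemma:cases}(iv) applies. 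Since $q^\mathrm{high}>p_\mathrm{target}$ by \eqref{eq:ptarget_condition}, the factor is $\leq p_\mathrm{target}-q^\mathrm{high}<0$, and using $v\log(\aup/\adown) = \min\{w/4,\log(\aup/\adown)\}$ gives \eqref{eq:small_sigma_expected_potential_decrease}. Regime (ii) is symmetric: its threshold dominates the one in \eqref{eq:large_sigma_potential_decrease} (because $\norm{\nabla f(m_t)}/\sqrt2\geq\sqrt{Lf(m_t)}$ and $\E[\mathcal{Q}_z(\theta_t)]\leq E_\mathcal{Q}$), so \eqref{eq:large_sigma_potential_decrease} applies; taking expectation reduces the bound to $v\log(\aup/\adown)(\Pr[\text{success}]-p_\mathrm{target})$, and the hypothesis with $\ell=\sqrt2\adown B_{\sup}^\mathrm{low}(q^\mathrm{low})$ forces $\bar\sigma>B_{\sup}^\mathrm{low}(q^\mathrm{low})$, hence $\Pr[\text{success}]\leq q^\mathrm{low}$ by \Cref{lemma:cases}(v).

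The reasonable regime (iii) is the crux. Here I use the general bound \eqref{eq:general_potential_decrease}, so that $\E[V(\theta_{t+1})-V(\theta_t)\mid\mathcal{F}_t]\leq (1-v/2)\,\E[\log(f(m_{t+1})/f(m_t))\mid\mathcal{F}_t] + v\log(\aup/\adown)$, and the task reduces to showing the expected log-progress is at most $-w$. Applying $\log(x)\leq x-1$ pointwise (both sides vanish on failure) and then \Cref{lemma:qualitygain}, the expected log-progress is at most $\frac{\sigma_t\norm{\nabla f(m_t)}}{f(m_t)}\bigl(\frac{\sigma_t\E[\mathcal{Q}_z(\theta_t)\ind{z_e\leq0}]}{2\norm{\nabla f(m_t)}}-\frac{1}{\sqrt{2\pi}}\bigr)\Pr[\text{success}]$. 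Writing $\E[\mathcal{Q}_z(\theta_t)\ind{z_e\leq0}]=\E[\mathcal{Q}_z(\theta_t)]/\kappa$ with $\kappa\geq\kappa_{\inf}$, the upper threshold $\sigma_t\leq\ell\norm{\nabla f(m_t)}/(\sqrt2\adown\E[\mathcal{Q}_z(\theta_t)])$ bounds the parenthesis by $\frac{B_{\sup}^\mathrm{low}(q^\mathrm{low})}{2\kappa_{\inf}}-\frac{1}{\sqrt{2\pi}}=-\frac{1}{2\kappa_{\inf}}\bigl(\kappa_{\inf}\sqrt{2/\pi}-B_{\sup}^\mathrm{low}(q^\mathrm{low})\bigr)$, which is strictly negative since $q^\mathrm{low}\in I_q$ forces $B_{\sup}^\mathrm{low}(q^\mathrm{low})<\kappa_{\inf}\sqrt{2/\pi}$ by \eqref{eq:def_Iq}. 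The same upper threshold gives $\bar\sigma\leq B_{\sup}^\mathrm{low}(q^\mathrm{low})$, so $\Pr[\text{success}]\geq Q$ by \Cref{lemma:cases}(vi); and the lower threshold with $s=\sqrt2\aup B_{\inf}^\mathrm{high}(q^\mathrm{high})$ together with $\norm{\nabla f(m_t)}\geq\sqrt{2Lf(m_t)}$ gives $\frac{\sigma_t\norm{\nabla f(m_t)}}{f(m_t)}\geq\frac{2LB_{\inf}^\mathrm{high}(q^\mathrm{high})}{E_\mathcal{Q}}$. Because the parenthesis is negative, I may replace both $\Pr[\text{success}]$ and the prefactor by these lower bounds, and the product equals exactly $-w$ of \eqref{eq:w}. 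Finally, $v\leq1$ gives $(1-v/2)(-w)\leq-w/2$, and $v\log(\aup/\adown)\leq w/4$, so the two contributions sum to $\leq -w/2+w/4=-w/4$, which is \eqref{eq:reasonable_sigma_expected_potential_decrease}; strict negativity follows since all factors of $w$ are positive.

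The main obstacle is regime (iii): one must simultaneously control the sign and magnitude of the quality-gain parenthesis, the success probability, and the prefactor $\sigma_t\norm{\nabla f(m_t)}/f(m_t)$, all of which depend on the current state, and verify that the constants $s,\ell,q^\mathrm{high},q^\mathrm{low}$ built into \Cref{def:potential} are tuned so that the three bounds multiply to precisely $w$. The delicate point is the order of operations: the negativity of the parenthesis (ensured by $q^\mathrm{low}\in I_q$) must be established \emph{before} one is allowed to replace $\Pr[\text{success}]$ and the prefactor by their extremal values, and each of the two thresholds is used to bound a different factor. The remaining regimes are routine once the $p_\mathrm{target}$ identities are in hand.
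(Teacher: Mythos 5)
Your proposal is correct and takes essentially the same route as the paper's own proof: the identical case split, taking conditional expectations of \eqref{eq:small_sigma_potential_decrease}, \eqref{eq:large_sigma_potential_decrease}, and \eqref{eq:general_potential_decrease} respectively, bounding the success probability via \Cref{lemma:cases} after converting each step-size threshold into a bound on $\bar\sigma$, and assembling the constants so that the regime-(iii) product collapses to exactly $-w$ before concluding with $-w/2 + w/4 = -w/4$. The only (cosmetic) difference is that you cite \Cref{lemma:cases} (iv), (v), (vi) consistently with the labels in the lemma statement, whereas the paper's appendix cites them with permuted numbering --- an apparent typo there; your usage is the correct one.
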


Condition $C3$ of \Cref{prop:convergence_rate} is obtained in the following lemma.
Its proof is provided in \Cref{apdx:lemma:v_variance_bound}.
%\todo{This lemma is not necessary to be an independent lemma. Incorporate the proof into that of Theorem 1.}\yohe{In my openion, it is not a bad idea to keep the following lemma as is. My reasoning is that by separating this kind of (not very interesting but not very short) proof, the main stream of the proof of the main theorem will be clearer. }\mori{I see. I'm now sure it is useful for readers to understand the proof idea.}
\begin{lemma}[Variance bound of the potential function]\label{lemma:v_variance_bound}
Consider the (1+1)-ES solving $f \in \mathcal{S}_{d,L,U}$ with $d > 3$ and $\aup$ and $\adown$ satisfying $p_\mathrm{target} \in I_q$. Let $\{\theta_t\in\Thetazero\}_{t\geq 0}$ be a sequence of the parameters of the (1+1)-ES.% and suppose that $m_0 \neq x^*$. 
Then, $\sum_{t=1}^{\infty}{\color{blue}\Var[V(\theta_{t})]} / t^2 < \infty$.
\end{lemma}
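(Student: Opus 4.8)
The plan is to verify condition C3 of \Cref{prop:convergence_rate} by bounding each conditional variance $\Var[V(\theta_t)\mid\mathcal{F}_{t-1}]$ by a single universal constant, independent of $t$ and of the state $\theta_{t-1}$; once such a uniform bound $C$ is available, the tail sum collapses to $C\sum_{t\geq 1} 1/t^2 < \infty$ and we are done. So the real task is to show $\sup_{t}\Var[V(\theta_t)\mid\mathcal{F}_{t-1}]<\infty$.

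First I would reduce the variance of $V(\theta_t)$ to the variance of the one-step increment. Since $\theta_{t-1}$ is $\mathcal{F}_{t-1}$-measurable, we have $\Var[V(\theta_t)\mid\mathcal{F}_{t-1}] = \Var[V(\theta_t)-V(\theta_{t-1})\mid\mathcal{F}_{t-1}]$, and by $\Var[Y]\leq\E[Y^2]$ it suffices to bound $\E[(V(\theta_t)-V(\theta_{t-1}))^2\mid\mathcal{F}_{t-1}]$. Next I would invoke the two-sided control on the potential increment already established in \Cref{lemma:potential_decrease}: the upper bound \eqref{eq:general_potential_decrease} and the lower bound \eqref{eq:constant_lower_bound_of_V} both express $V(\theta_t)-V(\theta_{t-1})$ as an affine function of the log-progress $\log(f(m_t)/f(m_{t-1}))$ plus a deterministic constant multiple of $\log(\aup/\adown)$. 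Concretely, $V(\theta_t)-V(\theta_{t-1})$ lies between $(1+v)\log(f(m_t)/f(m_{t-1})) - 2v\log(\aup/\adown)$ and $(1-\tfrac v2)\log(f(m_t)/f(m_{t-1})) + v\log(\aup/\adown)$. Because the log-progress is nonpositive (the (1+1)-ES is elitist), both coefficients $(1+v)$ and $(1-\tfrac v2)$ are positive and bounded, so $\abs{V(\theta_t)-V(\theta_{t-1})}$ is controlled by a constant times $\abs{\log(f(m_t)/f(m_{t-1}))}$ plus the fixed additive term $2v\log(\aup/\adown)$.

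It then follows that $(V(\theta_t)-V(\theta_{t-1}))^2 \leq 2(1+v)^2 \bigl(\log(f(m_t)/f(m_{t-1}))\bigr)^2 + 8v^2\log^2(\aup/\adown)$ after applying $(a+b)^2\leq 2a^2+2b^2$. Taking the conditional expectation, the additive term is a fixed constant, and the remaining term is a constant multiple of $\E\bigl[\log^2(f(m_t)/f(m_{t-1}))\mid\mathcal{F}_{t-1}\bigr]$, i.e.\ the second moment of the log-progress. Here I would appeal directly to \Cref{lemma:variancebound}, which (via $\E[x^2]\leq 2\E[\exp(\abs x)]$, as noted in the text preceding that lemma) already furnishes the uniform bound $\E[\exp(\abs{\log(f(m+\sigma z)/f(m))\ind{\cdot}})] \leq \tfrac UL(1+\tfrac1{d-3})$ valid for all $\theta\in\Thetazero$ under the hypothesis $d>3$. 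This gives a state-independent upper bound on the second moment of the log-progress, hence a state-independent bound $C$ on $\E[(V(\theta_t)-V(\theta_{t-1}))^2\mid\mathcal{F}_{t-1}]$.

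The remaining step is purely arithmetic: substitute $\sum_{t=1}^\infty C/t^2 = C\pi^2/6 < \infty$. The main obstacle — and it is a mild one — is bookkeeping the constants $v$, $\aup$, $\adown$, and $U/L$ to confirm that the resulting bound $C$ genuinely does not depend on $t$ or on $\theta_{t-1}$; the key enabling facts are that $v\leq 1$ by \eqref{eq:v}, that $\log(\aup/\adown)$ is a fixed hyperparameter, and that \Cref{lemma:variancebound} holds uniformly over $\Thetazero$. No delicate limit or measurability argument is needed, since the elitism of the algorithm guarantees the log-progress is nonpositive and the uniform exponential-moment bound does all the heavy lifting.
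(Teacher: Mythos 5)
Your proposal is correct and follows essentially the same route as the paper's proof: both reduce the conditional variance to the conditional second moment of the increment, use the two bounds \eqref{eq:general_potential_decrease} and \eqref{eq:constant_lower_bound_of_V} of \Cref{lemma:potential_decrease} to get $\abs{V(\theta_{t+1})-V(\theta_t)} \leq (1+v)\abs{\log(f(m_{t+1})/f(m_t))} + 2v\log(\aup/\adown)$, and then bound the moments of the log-progress uniformly over $\Thetazero$ via \Cref{lemma:variancebound} together with $x^2 \leq 2(\exp(\abs{x})-1)$, yielding a state-independent constant whose sum against $1/t^2$ is finite.
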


\subsection{Upper bound of the upper convergence rate}

An upper bound of the upper convergence rate of the (1+1)-ES on objective function $h\in\mathcal{P}_{d, L, U}^\gamma$ (\Cref{def:convergence}) is derived by using \Cref{prop:convergence_rate} and \Cref{cor:expected_potential_decrease,lemma:v_variance_bound}.

\begin{theorem}[Upper convergence rate bound]\label{theorem:lower}
Let $\{(\tilde{\theta}_t, \mathcal{F}_t)\}_{t \geq 0} = \texttt{ES}(h, \tilde{\theta}_0, \{z_t\}_{t \geq 0})$ be the state sequence of the (1+1)-ES solving $h\in\mathcal{P}_{d, L, U}^\gamma$ with $d > 3$, where $\tilde{\theta}_0 \in \Theta\setminus\{\tilde{\theta}\mid \tilde{m}= x^\mathrm{opt}\}$ and $h(\tilde{m}_0) \leq \gamma$.
Suppose that $\aup$ and $\adown$ are set so that $p_\mathrm{target} \in I_q$.
%\yohe{what are the points of introducing two sequences? probably, it is a copy-paste error.}
%   Let $f\in\mathcal{P}_{d, L, U}$ be a function satisfying \Cref{asm:L_U_2diff}.
% Assume that function $f$ corresponding to $h$ in \Cref{def:problem} satisfies \eqref{eq:tr_cond}.
% Suppose that $\aup$ and $\adown$ in \Cref{algo} are set so that $p_\mathrm{target}\in I_q$, where $p_\mathrm{target}$ is defined in \eqref{eq:p_target-def} and $I_q$ is defined in \eqref{eq:def_Iq}. 
%
Then, the upper convergence rate on $h$ satisfies $\exp(-\mathrm{CR}^\mathrm{upper}) \leq \exp(-B_{d, L, U}^\mathrm{upper})$, where
\begin{multline}
B_{d, L, U}^\mathrm{upper}
= \frac12\cdot\sup_{q^\mathrm{low}, q^\mathrm{high}}
\min\left\{\frac{w}{4}, \log\left(\frac{\aup}{\adown}\right)\right\}
\\
\cdot \min\{p_\mathrm{target} -  q^\mathrm{low}, q^\mathrm{high} - p_\mathrm{target} \} > 0 ,
\label{eq:B}
\end{multline}
where $\sup_{q^\mathrm{low}, q^\mathrm{high}}$ is taken over $q^\mathrm{low}\in I_q$ and $q^\mathrm{high}\in I_q^\mathrm{high}(q^\mathrm{low})$ satisfying \eqref{eq:ptarget_condition} with 
$I_q^\mathrm{high}$ defined in \eqref{eq:def_qhigh}, and $w$ is defined in \eqref{eq:w}.
%\mori{Think again later : we can obtain a better, or larger, $B$ with a larger $q^\mathrm{high}$. However, I define $q^\mathrm{high}$ as \eqref{eq:def_qhigh} which takes $\inf$. Why I take $\inf$?}\yohe{I don't have the anser, but the conditionfor $q^\mathrm{high}$ can be satisfied with $q$ arbitrarily close to $1/2$ (this is how we prove the existance). I was also wondering why we want to take $\inf$ (and $\sup$ over $\Theta$). Let me try to reformulate the Lemma statement.}\yohe{What I see now is that the statement "we can obtain a better, or larger, $B$ with a larger $q^\mathrm{high}$" is not necessarily correct. A greater $q^\mathrm{high}$ results in a smaller $w$. }\mori{Yes, I see.}
% 
%   Moreover, \yohe{state the condition on $p_\text{target}$. I think it is one of the strong point of the approach that we can set the hyper-parameter $p_\text{target}$ without using structures of the objective function. }
%   %\mori{I state it in the above about $\aup, \adown$. Or, are you saying it should be emphasized when stating the following our main result?}
%   as long as the hyper-parameter $\aup$ and $\adown$ are set so that $p_\mathrm{target}\in\left(\Phi\left(-\sqrt{\frac{2}{\pi}}\right), \frac12 \right)$, the following statement holds.
%   \begin{equation}
%     \lim_{d \to \infty}
%     \left\{
%     \inf_{h \in \mathcal{P}_{d, L, U}}
%     \left\{
%     \frac{ \mathrm{CR}^\text{upper}}{ \min\left\{ \frac{L}{E_\mathcal{Q}},  \log\left(\frac{\aup}{\adown}\right)\right\} }
%     \right\}
%     \right\}
%     \in (0, \infty) .
%     \label{eq:order_B}
%   \end{equation}
\end{theorem}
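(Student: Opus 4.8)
The plan is to apply the martingale strong law (\Cref{prop:convergence_rate}) to the potential process $X_t = V(\theta_t)$ of \Cref{def:potential}, and then push the resulting bound back to $\norm{\tilde{m}_t - x^\mathrm{opt}}$ through \Cref{prop:f-norm}. First I would reduce the statement on $h$ to one on the underlying $f \in \mathcal{S}_{d,L,U}$: by \eqref{eq:limsup}, with probability one
\[
\limsup_{t\to\infty}\frac1t\log\!\left(\frac{\norm{\tilde{m}_t - x^\mathrm{opt}}}{\norm{\tilde{m}_0 - x^\mathrm{opt}}}\right) = \limsup_{t\to\infty}\frac{1}{2t}\log\!\left(\frac{f(m_t)}{f(m_0)}\right),
\]
so it is enough to bound the right-hand side. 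Since $m_0 \neq 0$ and any accepted move adds an absolutely continuous Gaussian perturbation, $m_t \neq 0$ for every $t$ almost surely, hence $\theta_t \in \Thetazero$ along the trajectory and the one-step lemmas apply.

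Next I would fix admissible constants $q^\mathrm{low}\in I_q$ and $q^\mathrm{high}\in I_q^\mathrm{high}(q^\mathrm{low})$ satisfying \eqref{eq:ptarget_condition}, form the associated potential $V$, and check the hypotheses of \Cref{prop:convergence_rate} for $X_t = V(\theta_t)$, which is adapted to $\{\mathcal{F}_t\}$. Condition C1 comes from \Cref{cor:expected_potential_decrease}: its three step-size thresholds split $\Thetazero$ into the too-small, reasonable, and too-large regimes whose union is all of $(0,\infty)$ (the defining intervals either abut or overlap), and in each regime the conditional expected decrease is at most $-B^\mathrm{drift}$ with $B^\mathrm{drift}:=\min\{\tfrac{w}{4},\log(\aup/\adown)\}\cdot\min\{p_\mathrm{target}-q^\mathrm{low},\,q^\mathrm{high}-p_\mathrm{target}\}$. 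The only nontrivial verification is that the reasonable-step bound $-\tfrac{w}{4}$ of \eqref{eq:reasonable_sigma_expected_potential_decrease} is also $\leq -B^\mathrm{drift}$; this holds because $\min\{p_\mathrm{target}-q^\mathrm{low},q^\mathrm{high}-p_\mathrm{target}\}<\tfrac12$ forces $B^\mathrm{drift}<\tfrac{w}{4}$. Condition C3 is exactly \Cref{lemma:v_variance_bound}, so \Cref{prop:convergence_rate}(I) gives $\limsup_{t\to\infty}\tfrac1t(V(\theta_t)-V(\theta_0))\leq -B^\mathrm{drift}$ almost surely.

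It then remains to translate this back and optimize the constant. Because the two $\log^+$ terms in \eqref{eq:potential} are nonnegative, $\log(f(m_t))\leq V(\theta_t)$; dividing by $t$, discarding the vanishing terms $\tfrac1t V(\theta_0)$ and $\tfrac1t\log f(m_0)$, and combining with the displayed reduction yields
\[
\limsup_{t\to\infty}\frac1t\log\!\left(\frac{\norm{\tilde{m}_t - x^\mathrm{opt}}}{\norm{\tilde{m}_0 - x^\mathrm{opt}}}\right)\leq -\tfrac12 B^\mathrm{drift}
\]
almost surely, for each admissible pair $(q^\mathrm{low},q^\mathrm{high})$. Since the left-hand side is a single random variable that does not depend on these constants (they only reshape $V$, not the trajectory $\{m_t\}$), I can take the supremum over all admissible pairs to obtain $B^\mathrm{upper}_{d,L,U}=\tfrac12\sup_{q^\mathrm{low},q^\mathrm{high}}B^\mathrm{drift}$ as in \eqref{eq:B}; its strict positivity follows from $w>0$ in \eqref{eq:w}, where the only sign-sensitive factor $\sqrt{2/\pi}\,\kappa_{\inf}-B_{\sup}^\mathrm{low}(q^\mathrm{low})$ is positive precisely because $q^\mathrm{low}\in I_q$, together with \eqref{eq:ptarget_condition}.

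I expect the main obstacle to be bookkeeping rather than any single estimate: confirming that the three regimes of \Cref{cor:expected_potential_decrease} genuinely exhaust $\Thetazero$ so that C1 holds at every step, and rigorously justifying the passage to the supremum over $(q^\mathrm{low},q^\mathrm{high})$, where each choice alters the potential $V$ used in the drift argument but leaves the limsup being bounded unchanged.
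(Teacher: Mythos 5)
Your proposal is correct and follows essentially the same route as the paper's proof: reduce from $h$ to $f$ via \Cref{prop:f-norm}, apply \Cref{prop:convergence_rate} to $X_t = V(\theta_t)$ with C1 from \Cref{cor:expected_potential_decrease} and C3 from \Cref{lemma:v_variance_bound}, translate back through $\log(f(m)) \leq V(\theta)$, and get positivity from $w > 0$. If anything, you are more explicit than the paper on two points it glosses over --- verifying that the three step-size regimes exhaust $\Thetazero$ and justifying the supremum over $(q^\mathrm{low}, q^\mathrm{high})$ by noting the trajectory is independent of the choice of potential --- both of which you resolve correctly.
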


\begin{proof}[Proof of \Cref{theorem:lower}]
%\mori{I correct the proof so that each reasoning is stated on $f$, not $h$.}
Let $h \in \mathcal{T}^{\gamma}(f)$ and $f \in \mathcal{S}_{d, L, U}$ as defined in \Cref{def:problem}.
Let $S_T : (x, \log(\sigma)) \mapsto (x - x^\mathrm{opt}, \log(\sigma))$ and $\{(\theta_t, \mathcal{F}_t)\}_{t \geq 0} = \texttt{ES}(f, S_T(\tilde{\theta}_0), \{z_t\}_{t \geq 0})$ be the state sequence of the (1+1)-ES solving $f$.
Because the singleton $\{x^*\}$ is of zero Lebesuge measure, given $m_0 \neq x^*$, we have $m_t \neq x^*$ for all $t$ with probability one.
In light of \Cref{prop:f-norm}, the upper convergence rate on $h$, $-\mathrm{CR}^\mathrm{upper}$ is equal to
$\limsup_{t \to \infty} \frac{1}{2t} \log\left(\frac{f(\theta_t)}{f(\theta_0)}\right)$
with probability one.

As shown in the proof of \Cref{cor:expected_potential_decrease}, $w$ in \eqref{eq:w} is positive since $q^\mathrm{low}\in I_q$ satisfies $\sqrt{\frac{2}{\pi}}\cdot\kappa_\mathrm{inf}>B_\mathrm{sup}^\mathrm{low}(q^\mathrm{low})$ in light of \Cref{lemma:cases}.
Therefore, it is clear that $v$ defined in \eqref{eq:v} and hence, $B_{d, L, U}^\mathrm{upper}$ defined in \eqref{eq:B} are positive.

Let $X_t = V(\theta_t)$ in \Cref{prop:convergence_rate} with $V$ defined in \Cref{def:potential}.
Condition C1 --- $\E[V(\theta_{t+1}) - V(\theta_t)\mid\mathcal{F}_t]\leq -B$ for all $t\geq 0$ --- is satisfied with $B = 2\cdot B_{d, L, U}^\mathrm{upper}$ in light of \Cref{cor:expected_potential_decrease}.
Condition C3 --- $\sum_{t=1}^{\infty}\Var[V(\theta_{t}) \mid \mathcal{F}_{t-1}] / t^2 < \infty$ --- is satisfied for $d>3$ in light of \Cref{lemma:v_variance_bound}.
Therefore, with probability one, we obtain
%   \begin{equation}
%   \limsup_{t \to \infty} \frac{1}{t} \frac{V(\theta_t)}{V(\theta_0)} \leq - B .
% \end{equation}
$\limsup_{t \to \infty} \frac{1}{t} \left( V(\theta_t) - V(\theta_0) \right) \leq - 2\cdot B_{d, L, U}^\mathrm{upper}$.
Because $\log(f(m)) \leq V(\theta)$ for any $\theta \in \Thetazero$, we have
$\limsup_{t \to \infty} \frac{1}{2 t} \log\left(\frac{f(\theta_t)}{f(\theta_0)}\right) \leq - B_{d, L, U}^\mathrm{upper}$.
Hence, we obtain $\exp(-\mathrm{CR}^\text{upper}) \leq \exp(- B_{d, L, U}^\mathrm{upper}) < 1$.
\end{proof}

If $d$ is sufficiently large and the hyper-parameters $\aup$ and $\adown$ are selected so that $\Phi(- \sqrt{2/\pi}) < p_\mathrm{target} < 1/2$, the bound of the upper convergence rate is $\exp(- B_{d,L,U}^\mathrm{upper}) \in \exp\left(-\Omega\left(\min\left\{ \frac{L}{E_\mathcal{Q}},  \log\left(\frac{\aup}{\adown}\right)\right\}\right)\right)$.
\begin{corollary}[Scaling of the Upper Convergence Rate]\label{cor:upper}
Let $q^\mathrm{low} \in \left(\Phi\left(-\sqrt{\frac{2}{\pi}}\right), \frac{1}{2}\right)$ and 
$q^\mathrm{high} \in \left(
%1 - \Phi\left( \frac{\adown}{\aup} \Phi^{-1}\left( 1 - q^\mathrm{low}\right)\right)
\Phi\left( \frac{\adown}{\aup} \Phi^{-1}\left( q^\mathrm{low}\right)\right)
, \frac{1}{2}\right)$, which are independent of $d$. 
Suppose that $\aup$ and $\adown$ are set so that $q^\mathrm{low} < p_\mathrm{target} < q^\mathrm{high}$ for all $d$ ($\aup$ and $\adown$ can depend on $d$). 
Then, there exists $D > 3$ such that for any $d \geq D$, (i) \Cref{asm:tr_cond} is satisfied under \Cref{asm:L_U_2diff}, (ii)
$q^\mathrm{low} \in I_q$ and $q^\mathrm{high} \in I_q^\mathrm{high}(q^\mathrm{low})$, and (iii) the following statement holds
\begin{equation}
\lim_{d \to \infty}
\left\{
\inf_{h \in \mathcal{P}_{d, L, U}^\gamma}
\left\{
\frac{ B_{d, L, U}^\mathrm{upper} }{ \min\left\{ \frac{L}{E_\mathcal{Q}},  \log\left(\frac{\aup}{\adown}\right)\right\} }
\right\}
\right\}
\in (0, \infty) ,
\label{eq:order_B}
\end{equation}    
where $B_{d, L, U}^\mathrm{upper}$ is as defined in \eqref{eq:B}.
Moreover, we have $\frac{L}{E_\mathcal{Q}} \geq \frac{L}{dU}$. If $h \in \mathcal{T}^\gamma(f)$ for $f(x) = \frac{1}{2} x^\mathrm{T} H x$, where $H$ is a symmetric matrix whose eigenvalues are bounded in $[L, U]$, we have $\frac{L}{E_\mathcal{Q}} = \frac{L}{\Tr(H)}$.
\end{corollary}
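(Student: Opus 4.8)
The plan is to funnel all of the $f$-dependence of the problem through the two quantities $\mathcal{V}_\mathrm{std}$ and $\kappa_{\inf}$, establish their uniform asymptotics from \Cref{lemma:variance}, and then track how they propagate through $B_{\inf}^\mathrm{high}$, $B_{\sup}^\mathrm{low}$, $Q$, and $w$. First I would invoke \Cref{lemma:variance} to obtain, uniformly over every $f \in \mathcal{S}_{d,L,U}$ and every $\theta \in \Thetazero$, the bounds $\mathcal{V}_\mathrm{std} \leq 4U^2/(dL^2) \to 0$ and $\kappa_{\inf} \in [(\tfrac{1}{2} + (2/d)^{1/2}U/L)^{-1}, (\tfrac{1}{2} - (2/d)^{1/2}U/L)^{-1}] \to 2$. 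Part (i) is then immediate from \Cref{cor:tr_cond}, which already guarantees \Cref{asm:tr_cond} for all large $d$ under \Cref{asm:L_U_2diff}.

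For part (ii), I would show that the vanishing of $\mathcal{V}_\mathrm{std}$ forces $B_{\inf}^\mathrm{high}(q) \to 2\Phi^{-1}(1-q)$ and $B_{\sup}^\mathrm{low}(q) \to 2\Phi^{-1}(1-q)$ uniformly in $f$: in the $\sup$ and $\inf$ over $\epsilon$ in \eqref{eq:cor:b_high} and \eqref{eq:cor:b_low} the admissible range opens up so that one may take $\epsilon \to 0^+$ while keeping $\mathcal{V}_\mathrm{std}/\epsilon^2 \to 0$ (e.g. $\epsilon = \mathcal{V}_\mathrm{std}^{1/4}$), and since $B_\theta^\mathrm{high}(q) \leq 2\Phi^{-1}(1-q) \leq B_\theta^\mathrm{low}(q)$ both bounds collapse onto $2\Phi^{-1}(1-q)$. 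Feeding these limits together with $\kappa_{\inf} \to 2$ into \eqref{eq:def_Iq} and \eqref{eq:def_qhigh}, the threshold defining $I_q$ solves $2\sqrt{2/\pi} = 2\Phi^{-1}(1-q)$, i.e. $q = \Phi(-\sqrt{2/\pi})$, so $I_q \to (\Phi(-\sqrt{2/\pi}), \tfrac{1}{2})$; likewise the threshold defining $I_q^\mathrm{high}(q^\mathrm{low})$ solves $2\Phi^{-1}(1-q^\mathrm{low}) = \tfrac{\aup}{\adown}\,2\Phi^{-1}(1-q)$, i.e. $q = \Phi(\tfrac{\adown}{\aup}\Phi^{-1}(q^\mathrm{low}))$. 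Since $q^\mathrm{low}$ and $q^\mathrm{high}$ are chosen strictly inside these open limiting intervals, they lie in the finite-$d$ intervals once $d$ is large enough, establishing (ii).

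For part (iii), I would factor $w = \tfrac{L}{E_\mathcal{Q}}\cdot C_d(q^\mathrm{low}, q^\mathrm{high})$ from \eqref{eq:w}, where $C_d := \tfrac{B_{\inf}^\mathrm{high}(q^\mathrm{high})}{\kappa_{\inf}}(\sqrt{2/\pi}\,\kappa_{\inf} - B_{\sup}^\mathrm{low}(q^\mathrm{low}))\,Q$. Using the limits above, and $Q \to q^\mathrm{low}$ (the crossover $B_{\inf}^\mathrm{high}(q) = B_{\sup}^\mathrm{low}(q^\mathrm{low})$ occurs asymptotically at $q = q^\mathrm{low}$), $C_d$ converges uniformly in $f$ to $C_\infty = \Phi^{-1}(1-q^\mathrm{high})(2\sqrt{2/\pi} - 2\Phi^{-1}(1-q^\mathrm{low}))\,q^\mathrm{low}$, which is strictly positive precisely because $q^\mathrm{low} > \Phi(-\sqrt{2/\pi})$ and $q^\mathrm{high} < \tfrac{1}{2}$. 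The key observation is that $E_\mathcal{Q}$ then cancels: since $w/4 = \tfrac{C_d}{4}\cdot\tfrac{L}{E_\mathcal{Q}}$, an elementary estimate (for $a, x, b > 0$ one has $\min\{ax, b\}/\min\{x, b\} \in [\min\{a, 1\}, \max\{a, 1\}]$) shows $\min\{w/4, \log(\aup/\adown)\}/\min\{L/E_\mathcal{Q}, \log(\aup/\adown)\}$ lies between $\min\{C_d/4, 1\}$ and $\max\{C_d/4, 1\}$, hence is bounded and convergent. Multiplying by the fixed factor $\tfrac{1}{2}\min\{p_\mathrm{target} - q^\mathrm{low}, q^\mathrm{high} - p_\mathrm{target}\}$ and taking the supremum over admissible $(q^\mathrm{low}, q^\mathrm{high})$ yields a ratio converging, uniformly over $h \in \mathcal{P}_{d,L,U}^\gamma$, to a positive finite constant, so $\inf_h$ of the ratio and its limit both lie in $(0, \infty)$. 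Finally, $L/E_\mathcal{Q} \geq L/(dU)$ follows from $E_\mathcal{Q} \leq dU$ in \Cref{lemma:variance}(1), and for $f(x) = \tfrac{1}{2}x^\T H x$ a direct expansion gives $\mathcal{Q}_z(\theta) = z^\T H z$ independent of $(m, \sigma)$, whence $\E[\mathcal{Q}_z(\theta)] = \Tr(H) = E_\mathcal{Q}$ and $L/E_\mathcal{Q} = L/\Tr(H)$.

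I expect the main obstacle to be the limit in part (iii): because $\aup$ and $\adown$ may themselves depend on $d$ and shrink at the same $\Theta(1/d)$ rate as $w$, the two nested minima $\min\{w/4, \log(\aup/\adown)\}$ and $\min\{L/E_\mathcal{Q}, \log(\aup/\adown)\}$ may be realized by different arguments, so the cancellation of $E_\mathcal{Q}$ must be argued through the min-ratio bound rather than by a naive case split, and the convergence must be shown to be uniform over the entire class $\mathcal{P}_{d,L,U}^\gamma$. The enabling fact throughout is that all $f$-dependence is channeled through $\mathcal{V}_\mathrm{std}$ and $\kappa_{\inf}$, both controlled uniformly by \Cref{lemma:variance}.
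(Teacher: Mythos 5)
Your proposal is correct and follows essentially the same route as the paper: part (i) by \Cref{cor:tr_cond}; part (ii) by the uniform limits $\mathcal{V}_\mathrm{std}\to 0$ and $\kappa_{\inf}\to 2$ from \Cref{lemma:variance}, which drive $B_{\inf}^\mathrm{high}(q), B_{\sup}^\mathrm{low}(q) \to 2\Phi^{-1}(1-q)$ and hence the lower limits of $I_q$ and $I_q^\mathrm{high}(q^\mathrm{low})$ to $\Phi\left(-\sqrt{2/\pi}\right)$ and $\Phi\left(\frac{\adown}{\aup}\Phi^{-1}(q^\mathrm{low})\right)$; and part (iii) by $Q\to q^\mathrm{low}$ and the resulting convergence of $w/(L/E_\mathcal{Q})$ to a positive finite constant. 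The two places where you go beyond the paper's own proof --- the explicit $\min$-ratio estimate relating $\min\{w/4,\log(\aup/\adown)\}$ to $\min\{L/E_\mathcal{Q},\log(\aup/\adown)\}$, and the computation $\mathcal{Q}_z(\theta)=z^{\T}Hz$ giving $E_\mathcal{Q}=\Tr(H)$ --- are steps the paper leaves entirely implicit, so your write-up is, if anything, more complete than the published argument.
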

{
    We remark on the consequences.
    The hyper-parameters $\aup$ and $\adown$ are typically chosen such that $\log(\aup / \adown)\in\Omega_{d \to \infty}(1/d)$.
    In light of \Cref{cor:upper}, the upper convergence rate $\exp\left(-\mathrm{CR}^\mathrm{upper}\right)$ is in $ \exp\left(- \Omega_{d \to \infty}\left(\frac{L}{E_\mathcal{Q}} \right)\right)$ under a choice such that $\log(\aup / \adown) \in \Omega_{d \to \infty}(1/d)$.
    Otherwise (i.e. if $\log(\aup / \adown) \in o_{d \to \infty}(1/d)$), we have $\exp\left(-\mathrm{CR}^\mathrm{upper}\right) \in\exp\left(- \Omega_{d \to \infty}\left(\log\left(\frac{\aup}{\adown}\right) \right)\right) = \exp\left(-\Omega_{d \to \infty}\left(\frac{\log\left( 1/\adown\right)}{p_\mathrm{target}}\right)\right)$.
    This is rather intuitive for the following reason. $\norm{m_t - x^*}$ does not converge faster than $\sigma_t$ because $\sigma_t$ must be proportional to $\norm{m_t - x^*}$ to produce a sufficient decrease. The speed of the decrease in $\sigma_t$ is $\adown$. Therefore, the upper convergence rate should not be smaller than $\adown$.

The dependency of the convergence rate on the trace of the Hessian matrix is derived under the optimal step-size situation for a different ES variant \cite{akimoto2020tcs}. It has also been derived for a different ES variant in \cite{beyer2013dynamics} though it is not a rigorous convergence rate analysis.}
\begin{proof}
The first claim, (i), is proved in \Cref{cor:tr_cond}. In the following, we prove claims (ii) and (iii).

Continuing from the proof of \Cref{theorem:lower}, we consider $f$ corresponding to $h$. Let $\{\theta_t\}$ be defined in the proof of \Cref{theorem:lower}.

In light of \Cref{lemma:variance},
given any $0 < L \leq U$,
we have
\begin{equation}
\underset{d\to\infty}{\lim}
\left\{
\underset{f\in\mathcal{S}_{d, L, U}}{\sup}
\left\{
\mathcal{V}_\mathrm{std} 
\right\}
\right\}
\leq 
\underset{d\to\infty}{\lim}
\left\{
\frac{4U^2}{dL^2}
\right\}
= 0
.
\label{eq:tr_cond_asymptotic}
\end{equation}
Then, we have
$
\underset{d\to\infty}{\lim}
\left\{
B_{\inf}^\mathrm{high}(q)
\right\}
= 2\cdot\Phi^{-1}(1-q)
$
and
$
\underset{d\to\infty}{\lim}
\left\{
B_{\sup}^\mathrm{low}(q)
\right\}
= 2\cdot\Phi^{-1}(1-q)
$.
Moreover, as we prove in the proof of \Cref{cor:tr_cond}, we have
$\kappa_{\inf} \to 2$ as $d\to\infty$.
Therefore, the lower limit of $I_q$ defined as \eqref{eq:def_Iq} approaches $ \Phi\left(-\sqrt{\frac{2}{\pi}}\right)$ as $d\to\infty$ and the lower limit of $I_q^\mathrm{high}(q^\mathrm{low})$ approaches
$\Phi\left( \frac{\adown}{\aup} \Phi^{-1}\left( q^\mathrm{low}\right)\right)$. 
Hence, there exists an integer $D'$ such that claim (ii) holds for all $d \geq D'$.

Finally, we prove claim (iii). 
From \eqref{eq:def_Q}, it is clear that $Q \to q^\text{low}$ as $d\to\infty$.
Therefore, as $d \to \infty$, we obtain
\begin{multline}
\frac{\inf_{f \in \mathcal{S}_{d,L,U}} \{w\}}{L/E_\mathcal{Q}} 
\to 
\Phi^{-1}(1- q^\mathrm{high})
\\
\cdot\left(\sqrt{\frac{2}{\pi}} - \Phi^{-1}(1-q^\mathrm{low})\right) \cdot q^\mathrm{low}
\in (0, \infty).\qedhere
\end{multline}
\end{proof}

\subsection{Lower Convergence Rate Bound}

The lower bound of the lower convergence rate can also be derived.

\begin{theorem}[Lower Convergence Rate Bound]\label{thm:lowerconvergencerate}
Let $\{(\tilde{\theta}_t, \mathcal{F}_t)\}_{t \geq 0} = \texttt{ES}(h, \tilde{\theta}_0, \{z_t\}_{t \geq 0})$ be the state sequence of the (1+1)-ES solving $h \in \mathcal{P}_{d,L,U}^\gamma$ with $d > 3$. Suppose that $\tilde{\theta}_0 \in \Theta\setminus\{\tilde{\theta}\mid \tilde{m}= x^\mathrm{opt}\}$.
Then, the lower convergence rate on $h$ satisfies $\exp(-\mathrm{CR}^\mathrm{lower}) \geq \exp(-1/d)$.
\end{theorem}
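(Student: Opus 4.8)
The plan is to apply part (II) of \Cref{prop:convergence_rate}. By \Cref{prop:f-norm} (specifically \eqref{eq:liminf}) it suffices to analyze the (1+1)-ES on the corresponding $f \in \mathcal{S}_{d,L,U}$ and to show that $\liminf_{t\to\infty}\frac{1}{t}\log(\norm{m_t}/\norm{m_0})\geq -1/d$ with probability one. I therefore set $X_t = \log\norm{m_t}$ (well defined for all $t$ with probability one, since $m_0\neq 0$ and $\{0\}$ is Lebesgue-null) and aim to verify condition C2 with $B^\mathrm{lower} = 1/d$ together with condition C3; \Cref{prop:convergence_rate}(II) then delivers the claim.

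For C2 I must show $\E[X_{t+1}-X_t\mid\mathcal{F}_t]\geq -1/d$. Writing $\Delta = \log\norm{m_t+\sigma_t z_t} - \log\norm{m_t}$ and $A = \{f(m_t+\sigma_t z_t)\leq f(m_t)\}$, we have $X_{t+1}-X_t = \Delta\cdot\ind{A}$, since the change is $0$ on rejection. The key, purely geometric, observation is that the acceptance rule of the \emph{sphere} function dominates: because $\Delta\leq 0$ holds exactly on the event $B=\{\norm{m_t+\sigma_t z_t}\leq\norm{m_t}\}$, a short case analysis on the sign of $\Delta\cdot(\ind{A}-\ind{\Delta\leq0})$ shows that this quantity is nonnegative pointwise, hence $\E[\Delta\ind{A}\mid\mathcal{F}_t]\geq\E[\Delta\ind{B}\mid\mathcal{F}_t] = \E[\min(\Delta,0)\mid\mathcal{F}_t]$. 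This removes all dependence on $f$ and reduces C2 to a statement about an isotropic Gaussian step measured in Euclidean norm.

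It thus remains to prove the uniform bound $\E[\min(\Delta,0)]\geq -1/d$ over all normalized step sizes $\tau := \sigma_t/\norm{m_t}>0$. Decomposing $z_t = z_1 u + z_\perp$ with $u = m_t/\norm{m_t}$, $z_1\sim\mathcal{N}(0,1)$, and $\norm{z_\perp}^2\sim\chi^2_{d-1}$ independent of $z_1$, one obtains $\norm{m_t+\sigma_t z_t}^2/\norm{m_t}^2 = (1+\tau z_1)^2 + \tau^2\norm{z_\perp}^2 =: Y$, so that $\E[\min(\Delta,0)] = \tfrac12\E[\log Y\cdot\ind{Y<1}]$. I expect this uniform estimate to be the main obstacle: the $\Theta(1/d)$ scaling does not come from the one-dimensional factor $\log\abs{1+\tau z_1}$ alone but crucially from the smallness of $\Pr[Y<1]$, i.e.\ from the fact that the orthogonal mass $\tau^2\norm{z_\perp}^2$ concentrates near $\tau^2 d$ and must be forced below $1$. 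The plan is to exploit the concentration of $\norm{z_\perp}^2$ around $d-1$ so that the probability and log-magnitude of the event $\{Y<1\}$ trade off to $O(1/d)$, while separately controlling the negative tail arising when $m_t+\sigma_t z_t$ lands near the origin (which has negligible probability for $d>3$); the worst case over $\tau$ occurs at $\tau = \Theta(1/d)$ and is expected to yield a constant strictly below $1$.

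Finally, for C3 I would bound $\Var[X_t\mid\mathcal{F}_{t-1}]\leq\E[(\Delta\ind{A})^2\mid\mathcal{F}_{t-1}]$ uniformly in $t$. Since $f\in\mathcal{S}_{d,L,U}$ has minimum $0$ at the origin with vanishing gradient there, strong convexity and Lipschitz smoothness give $\tfrac{L}{2}\norm{x}^2\leq f(x)\leq\tfrac{U}{2}\norm{x}^2$, whence $\abs{\log(\norm{m_t+\sigma_t z_t}^2/\norm{m_t}^2) - \log(f(m_t+\sigma_t z_t)/f(m_t))}\leq\log(U/L)$ on $A$. Consequently $\Delta\ind{A}$ differs from $\tfrac12\log(f(m_t+\sigma_t z_t)/f(m_t))\ind{A}$ by a bounded additive term of magnitude at most $\tfrac12\log(U/L)$, and the variance of the latter is bounded uniformly in $t$ for $d>3$ by the moment estimate underlying \Cref{lemma:variancebound} (using $\Var[W]\leq 2\E[\exp(\abs{W})]$). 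A uniform bound $\Var[X_t\mid\mathcal{F}_{t-1}]\leq C$ then gives $\sum_{t}\Var[X_t\mid\mathcal{F}_{t-1}]/t^2\leq C\sum_{t} t^{-2}<\infty$, which establishes C3 and completes the argument.
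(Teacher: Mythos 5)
Your overall architecture is exactly the paper's: reduce to $f \in \mathcal{S}_{d,L,U}$ via \eqref{eq:liminf} of \Cref{prop:f-norm}, set $X_t = \log\norm{m_t}$, and verify conditions C2 and C3 of \Cref{prop:convergence_rate}(II). Your C3 argument (compare $\log(\norm{m_{t+1}}/\norm{m_t})$ with $\tfrac12\log(f(m_{t+1})/f(m_t))$ via $\tfrac{L}{2}\norm{x}^2 \leq f(x) \leq \tfrac{U}{2}\norm{x}^2$, then invoke \Cref{lemma:variancebound} with $x^2 \leq 2(\exp\abs{x}-1)$) is precisely what the paper does and is complete. Your pointwise reduction $\Delta\cdot\ind{A} \geq \min(\Delta,0)$, which strips away all dependence on $f$, is also correct and mirrors the first inequality of the step the paper uses.

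However, there is a genuine gap at the heart of C2: you never prove the uniform estimate $\E[\min\{\log(\norm{m_t+\sigma_t z_t}/\norm{m_t}),0\}\mid\mathcal{F}_t] \geq -1/d$ over all step sizes. You explicitly flag it as ``the main obstacle,'' describe a decomposition $Y = (1+\tau z_1)^2 + \tau^2\norm{z_\perp}^2$ and a concentration heuristic, and state that the worst case over $\tau$ is ``expected to yield a constant strictly below $1$'' --- but an expectation is not a proof, and note that the theorem requires the constant to be at most exactly $1$ (the claim is $\exp(-\mathrm{CR}^\mathrm{lower}) \geq \exp(-1/d)$, not $\exp(-O(1/d))$), so the constant matters. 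This estimate is the entire nontrivial content of the theorem; everything else is routine plumbing. The paper closes this step not by proving it in-line but by citing Lemma 4.6 of \cite{akimoto2020global}, which establishes precisely
\begin{equation}
\E[\log(\norm{m_{t+1}}/\norm{m_t}) \mid \mathcal{F}_t] \geq \E[\min\{\log(\norm{m_{t+1}}/\norm{m_t}),0\} \mid \mathcal{F}_t] \geq -1/d
\end{equation}
for arbitrary measurable objective functions and $d \geq 2$. To complete your proof you must either carry out the uniform-in-$\tau$ analysis you sketch (a genuinely delicate computation, which is why it is a standalone lemma in the cited work) or invoke that result.
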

\begin{proof}
We prove it by using \Cref{prop:f-norm,prop:convergence_rate}. 
Continuing from the proof of \Cref{theorem:lower}, we consider $f$ corresponding to $h$. Let $\{\theta_t\}$ be defined in the proof of \Cref{theorem:lower}.

We let $X_t = \log(\norm{m_t})$ in \Cref{prop:convergence_rate}. 
Lemma 4.6 of \cite{akimoto2020global} shows that
\begin{multline}
    \E[\log(\norm{m_{t+1}}/\norm{m_t}) \mid \mathcal{F}_t ] \\
    \geq 
    \E[\min\{ \log(\norm{m_{t+1}}/\norm{m_t}), 0 \} \mid \mathcal{F}_t ] \geq - 1/d
\end{multline}
for an arbitrary measurable objective function $h$ with $d \geq 2$. 
This shows condition C2 in \Cref{prop:convergence_rate} with $B^\mathrm{lower} = 1/d$. Hence, it suffices to show condition C3.

Under \Cref{asm:L_U_2diff}, we have $\log(2/U) \leq \log(\norm{x}^2) - \log(f(x)) \leq \log(2/L)$. Therefore,
\begin{multline}
    \frac{1}{2}\log\left(\frac{L}{U}\right)
    \leq
    \log\left(\frac{\norm{m_{t+1}}}{\norm{m_t}}\right)
    - \frac{1}{2} \log\left(\frac{f(m_{t+1})}{f(m_t)}\right)
    \\
    \leq
    \frac{1}{2}\log\left(\frac{U}{L}\right).
\end{multline}
\Cref{lemma:variancebound} along with {$x^2 \leq 2 (\exp\abs{x} - 1)$} and the above inequality implies
\begin{equation}
\begin{aligned}[t]
&{\color{blue}\Var\left[\log\left(\frac{\norm{m_{t+1}}}{\norm{m_t}}\right)\right]}
\leq
{\color{blue}\E\left[\left(\log\left(\frac{\norm{m_{t+1}}}{\norm{m_t}}\right)\right)^2\right] }
\\
&\leq \frac{1}{2}{\color{blue}\E\left[\left(\log\left(\frac{f(m_{t+1})}{f(m_t)}\right)\right)^2\right]} + \frac{1}{2}\left(\log\left(\frac{U}{L}\right)\right)^2
\\
&\leq
\left(\frac{U}{L}\cdot\left(1+\frac{1}{d-3}\right) -1\right) + \frac{1}{2}\left(\log\left(\frac{U}{L}\right)\right)^2
<\infty
.
\end{aligned}
\end{equation}
This shows condition C3 in \Cref{prop:convergence_rate}.
This completes the proof.
\end{proof}

\begin{figure}[t]
    \centering
    \begin{subfigure}{0.33\hsize}%
    \includegraphics[width=\hsize, trim=10 15 10 10, clip]{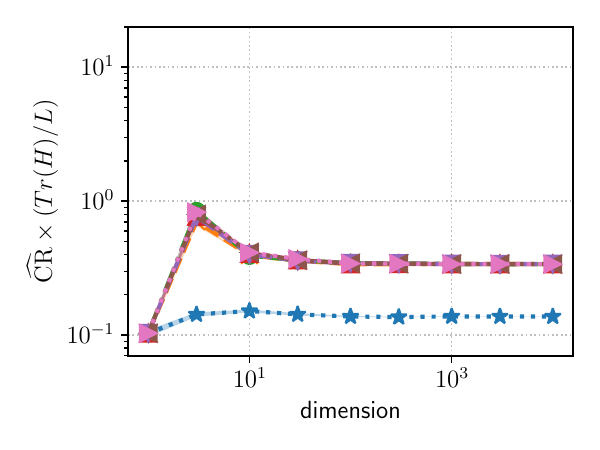}%
    \\
    \includegraphics[width=\hsize, trim=10 15 10 10, clip]{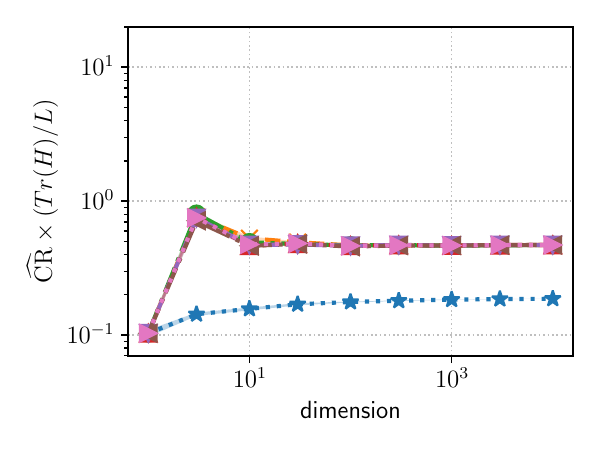}%
    \\
    \includegraphics[width=\hsize, trim=10 15 10 10, clip]{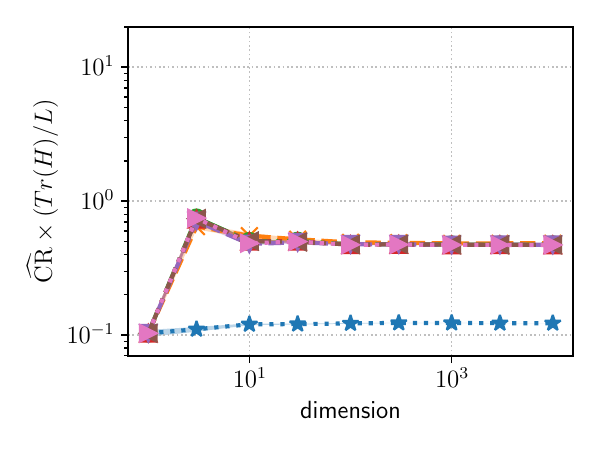}%
    \caption{$H_1$}\label{fig:cigar}%
    \end{subfigure}%
    \begin{subfigure}{0.33\hsize}%
    \includegraphics[width=\hsize, trim=10 15 10 10, clip]{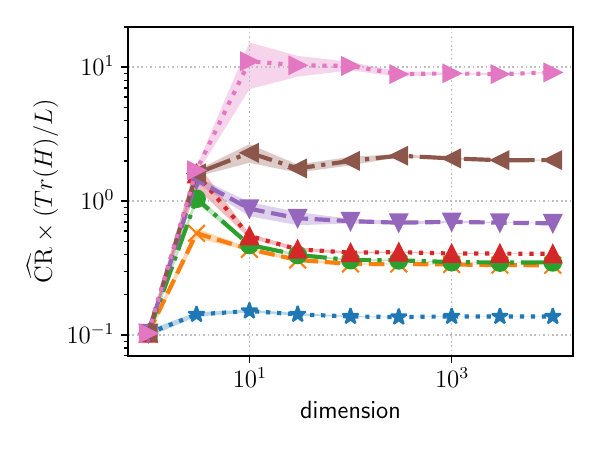}%
    \\
    \includegraphics[width=\hsize, trim=10 15 10 10, clip]{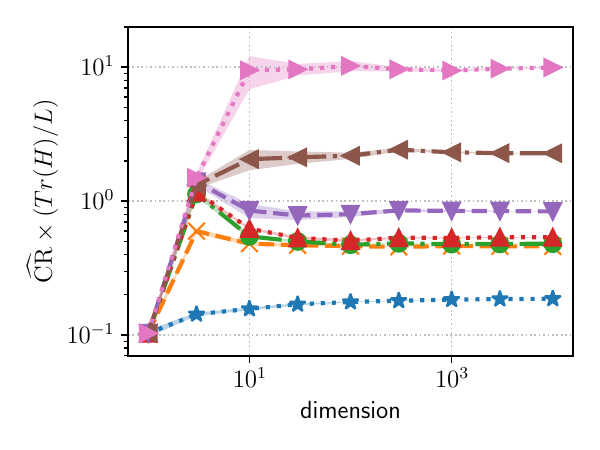}%
    \\
    \includegraphics[width=\hsize, trim=10 15 10 10, clip]{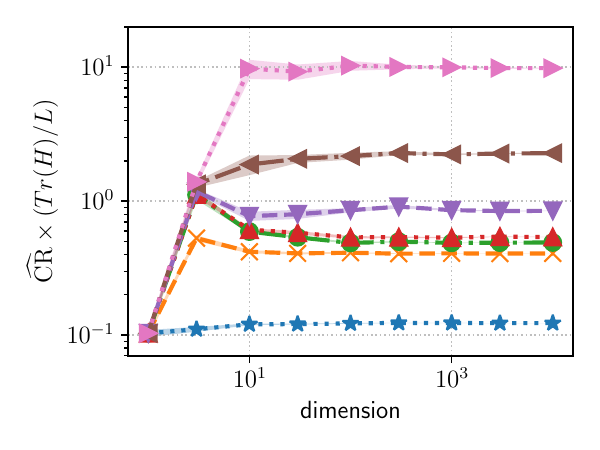}%
    \caption{$H_2$}\label{fig:ellipsoid}%
    \end{subfigure}%
    \begin{subfigure}{0.33\hsize}%
    \includegraphics[width=\hsize, trim=10 15 10 10, clip]{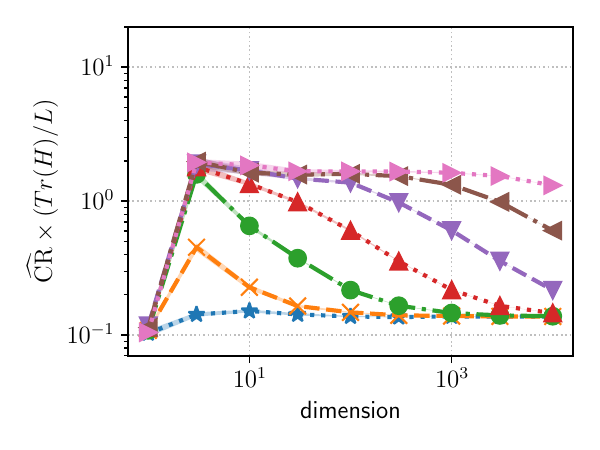}%
    \\
    \includegraphics[width=\hsize, trim=10 15 10 10, clip]{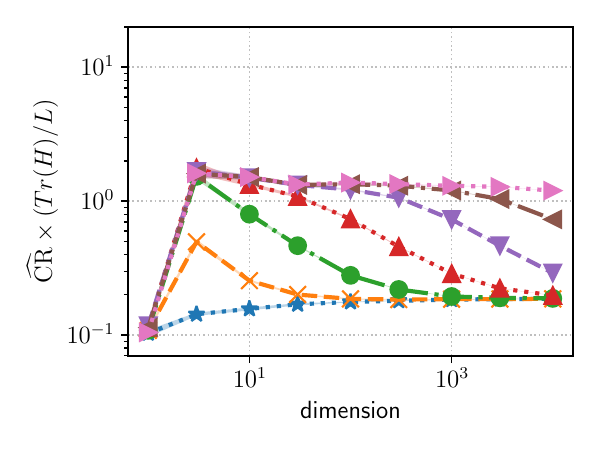}%
    \\
    \includegraphics[width=\hsize, trim=10 15 10 10, clip]{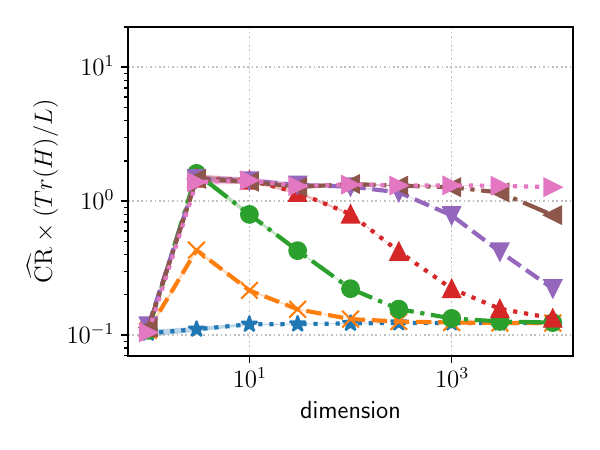}%
    \caption{$H_3$}\label{fig:discus}%
    \end{subfigure}%
    \caption{Results on convex quadratic functions with Hessian matrix $H_1$, $H_2$, and $H_3$. Average and standard error of $\widehat{\mathrm{CR}} \times (\Tr(H) / L)$ with varying $d$ and varying $\kappa$: ($\star$) $0$, ($\times$) $1$, ($\circ$) $2$, ($\triangle$) $3$, ($\triangledown$) $4$, ($\lhd$) $5$, ($\rhd$) $6$. Top: $\aup = \exp(1)$. Middle: $\aup = \exp(1/\sqrt{d})$. Bottom: $\aup = \exp(1/d)$.}
    \label{fig:all}
\end{figure}

\begin{figure}[t]
    \centering
    \begin{subfigure}{0.33\hsize}%
    \includegraphics[width=\hsize, trim=10 15 10 10, clip]{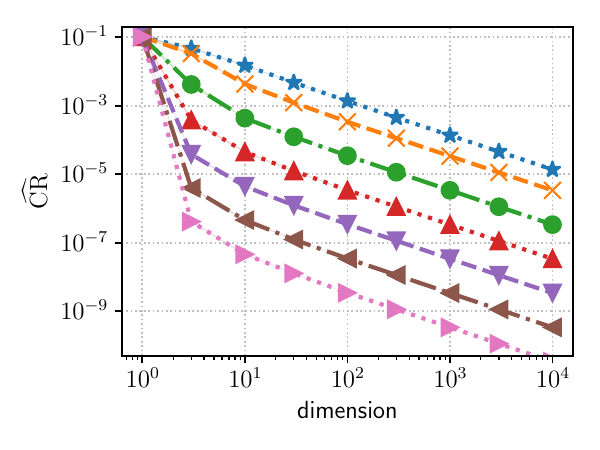}%
    \\
    \includegraphics[width=\hsize, trim=10 15 10 10, clip]{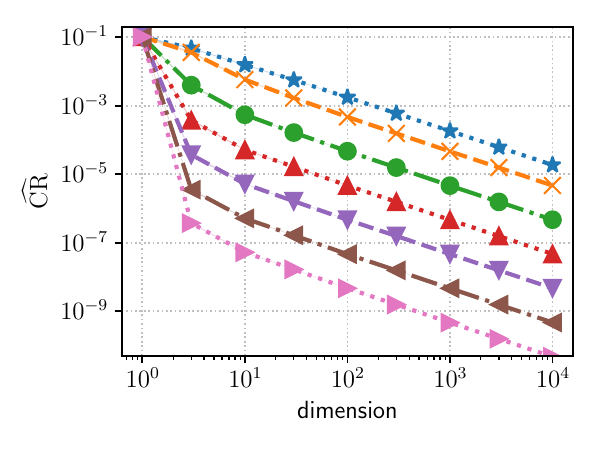}%
    \\
    \includegraphics[width=\hsize, trim=10 15 10 10, clip]{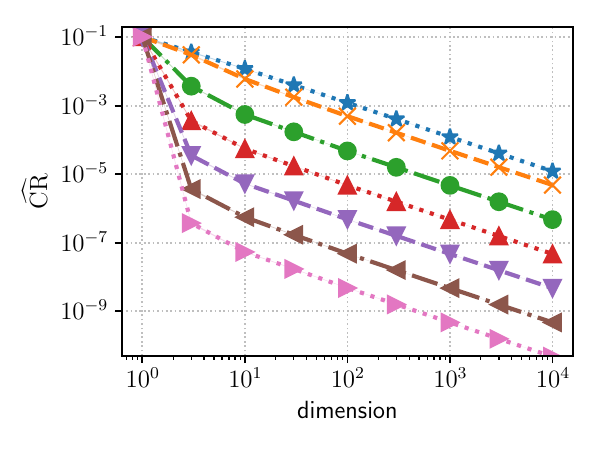}%
    \caption{$H_1$}\label{fig:cigar_woscale}%
    \end{subfigure}%
    \begin{subfigure}{0.33\hsize}%
    \includegraphics[width=\hsize, trim=10 15 10 10, clip]{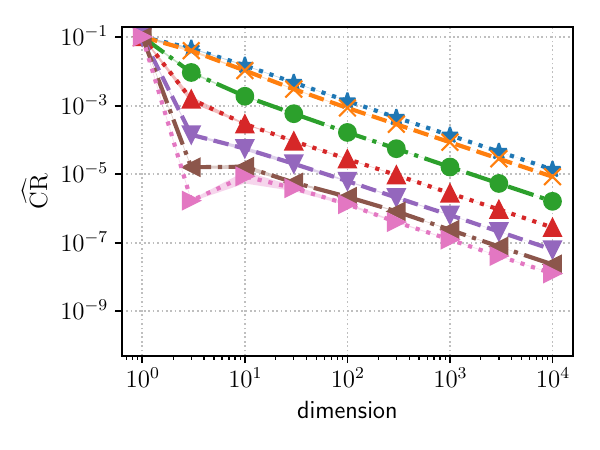}%
    \\
    \includegraphics[width=\hsize, trim=10 15 10 10, clip]{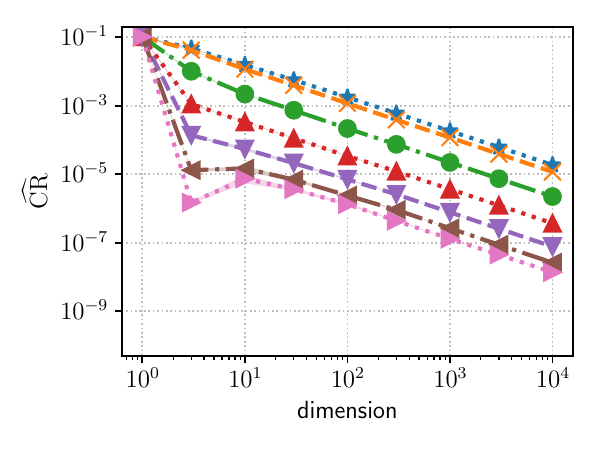}%
    \\
    \includegraphics[width=\hsize, trim=10 15 10 10, clip]{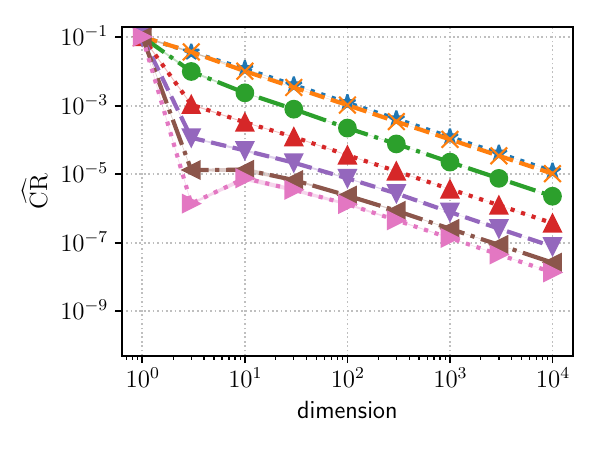}%
    \caption{$H_2$}\label{fig:ellipsoid_woscale}%
    \end{subfigure}%
    \begin{subfigure}{0.33\hsize}%
    \includegraphics[width=\hsize, trim=10 15 10 10, clip]{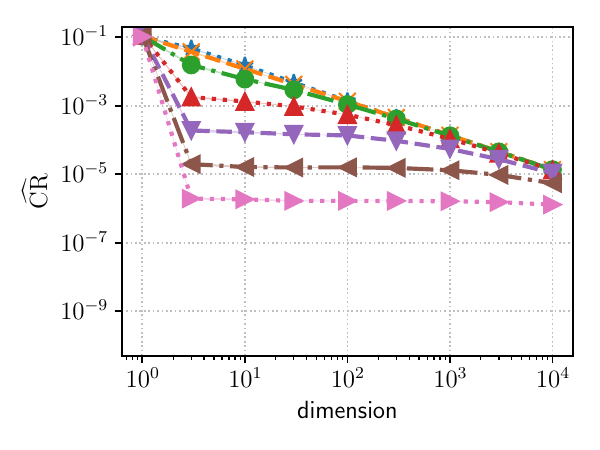}%
    \\
    \includegraphics[width=\hsize, trim=10 15 10 10, clip]{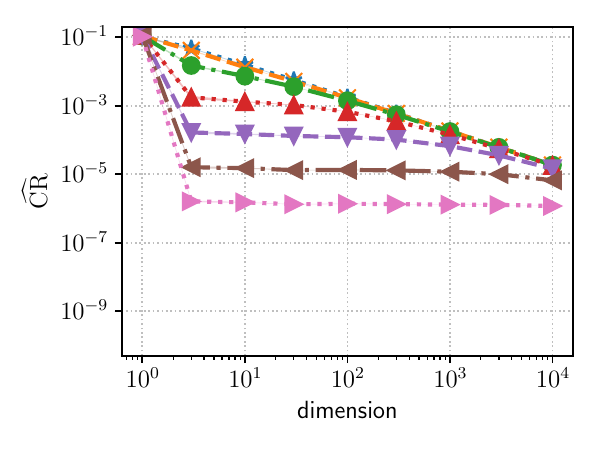}%
    \\
    \includegraphics[width=\hsize, trim=10 15 10 10, clip]{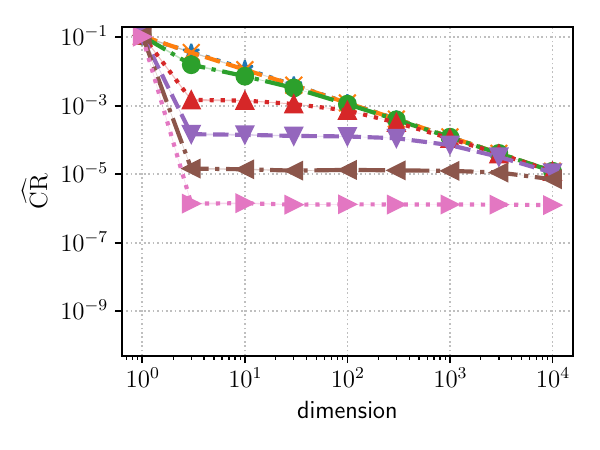}%
    \caption{$H_3$}\label{fig:discus_woscale}%
    \end{subfigure}%
    \caption{Results on convex quadratic functions with Hessian matrix $H_1$, $H_2$, and $H_3$. Average and standard error of $\widehat{\mathrm{CR}}$ with varying $d$ and varying $\kappa$: ($\star$) $0$, ($\times$) $1$, ($\circ$) $2$, ($\triangle$) $3$, ($\triangledown$) $4$, ($\lhd$) $5$, ($\rhd$) $6$. Top: $\aup = \exp(1)$. Middle: $\aup = \exp(1/\sqrt{d})$. Bottom: $\aup = \exp(1/d)$.}
    \label{fig:all_woscale}
\end{figure}

\subsection{Numerical Evaluation}\label{sec:exp}

We conducted numerical experiments to estimate the convergence rate of the (1+1)-ES with success-based step-size adaptation on different convex quadratic functions with different hyper parameter value $\aup$. We set $\adown = \aup^{-1/4}$ and $\aup = \exp(1)$, $\exp(1/\sqrt{d})$ and $\exp(1/d)$. 

We consider three convex quadratic functions $f(x) = \frac{1}{2} x^\mathrm{T} H x$ with Hessian matrices defined as 
\begin{align}
    H_1 &= \diag\left(1, 10^{\kappa}, \dots, 10^{\kappa}\right),
    \\
    H_2 &= \diag\left(10^{\kappa\cdot\frac{0}{d-1}}, 10^{\kappa\cdot\frac{1}{d-1}}, \dots, 10^{\kappa\cdot\frac{d-1}{d-1}}\right),
    \\
    H_3 &= \diag\left(1, \dots, 1, 10^{\kappa}\right).
\end{align}
In all cases, the greatest and smallest eigenvalues of the Hessian matrices are $U = 10^{\kappa}$ and $L = 1$, while the trace of the Hessian is $\Tr(H) = 1 + (d-1)10^{\kappa}$, $\sum_{i=1}^{d} 10^{\kappa \cdot \frac{i-1}{d-1}}$, and $d-1 + 10^{\kappa}$ for $H_1$, $H_2$ and $H_3$, respectively. Note that they all reduce to $H_0 = I$ when $\kappa = 0$.

The convergence rate is estimated by running the (1+1)-ES for $T = 10000 + 1000 \cdot d$ iterations. We estimate $\mathrm{CR}$ in \Cref{def:convergence} as $\widehat{\mathrm{CR}}$ by the least square linear regression of $\log(m_t)$ over $t \in \llbracket 0.9 T +1, T\rrbracket$. 
To avoid the numerical error, if $f(m_{\tilde{t}}) < 10^{-100}$ is reached at iteration $\tilde{t}$, we stop the run and set $T = \tilde{t}$ when estimating the convergence rate.
The initial solution $m_0$ is drawn from $\mathcal{N}(0, I)$ for each trial and the initial step-size is set to $\sigma_0 = \norm{\nabla f(m_0)} / \Tr(H)$. 
We run 10 independent trials.

\Cref{fig:all,fig:all_woscale} show the average and the standard deviation of $\widehat{\mathrm{CR}}$ on functions with $H_1$, $H_2$, and $H_3$, respectively, with condition number $10^{\kappa}$ for $\kappa = 0, \dots, 6$ on dimension $d = 1, 3, 10, 30, 100, 300, 1000, 3000, 10000$. 
Particularly, \Cref{cor:upper} implies $\mathrm{CR} \in \Omega(L / Tr(H))$ as $d \to \infty$ and \Cref{thm:lowerconvergencerate} implies $\mathrm{CR} \leq 1/d$. 

First, we observe that $\widehat{\mathrm{CR}}$ values observed for different $\aup$ values are very similar. 
Although it takes $d$ times more iterations to increase or decrease the step-size for the same amount when $\aup = \exp(1/d)$ than when $\aup = \exp(1)$, the difference in $\widehat{\mathrm{CR}}$ was less than the factor of $2$. However, as \Cref{theorem:lower} states, $\mathrm{CR}$ will be decreased as $d$ increases if we set $\log(\aup) \in o(1/d)$.  

Second, we observe the greatest $\widehat{\mathrm{CR}}$ when $\kappa = 0$ for all Hessian settings. We observe $\widehat{\mathrm{CR}} \approx 0.1 / d$. Therefore, the result of \Cref{thm:lowerconvergencerate} is tight up to a constant factor of approximately $0.1$. 

Third, we observe that $\widehat{\mathrm{CR}} \times (\Tr(H) / L) \geq 0.1$ for all cases.
This implies the implication of \Cref{cor:upper} (i.e., $\mathrm{CR} \in \Omega(L/\Tr(H))$) is valid for small $d$ although it is derived for the limit of $d$ to infinity.

Fourth, we observe that $\widehat{\mathrm{CR}} \times (\Tr(H) / L) \in [0.1, 2.0]$ on $H_1$ and $H_3$ for all $\kappa$ and for all $d$. This implies $\mathrm{CR} \in \Omega(L/\Tr(H))$ is a tight bound up to a constant factor on $H_1$ and $H_3$. 
However, we observe that $\widehat{\mathrm{CR}} \times (\Tr(H) / L) \in [0.1, 2.0]$ on $H_2$ for $\kappa \leq 3$ and for all $d$, but $\widehat{\mathrm{CR}} \times (\Tr(H) / L)$ becomes greater as $\kappa$ increases for $\kappa \geq 4$. We conjecture that it is because the (1+1)-ES did not reach the stationary regime within the given budget on $H_2$ with $\kappa \geq 4$ and $\mathrm{CR}$ was overestimated. Thus, we observed a significantly greater $\widehat{\mathrm{CR}}$ if we reduce $T$ by the factor of $10$.

Fifth, we observe that $\widehat{\mathrm{CR}}$ scales differently with respect to $d$ on $H_1$ and $H_3$. If we consider the naive bound derived in \Cref{cor:upper} (i.e., $\mathrm{CR} \in \Omega(L/(d\cdot U)))$), the bounds for these two problems are the same. However, if we consider the bound specialized for convex quadratic function derived in \Cref{cor:upper} (i.e., $\mathrm{CR} \in \Omega(L/\Tr(H))$), we find that the bounds are $1 / (1 + (d-1)10^{\kappa})$ for $H_1$ and $1 / (d-1 + 10^{\kappa})$ for $H_3$. The bound for $H_1$ is well approximated by the naive bound. The bound for $H_3$ is approximated by $1/10^{\kappa}$ if $d < 10^{\kappa}$ and $1/d$ if $d > 10^{\kappa}$. This clearly appears in \Cref{fig:discus_woscale}. Namely, the convergence rate of the (1+1)-ES does not always rely on the ratio $L/(d\cdot U)$ as in the naive bound. However, it can be as good as $1/d$ when the number of axes sensitive to the objective function (corresponding to the eigen vectors for large eigenvalues of the Hessian matrix) is limited.

\section{Discussion}

We derived the convergence rate bounds for the (1+1)-ES with success-based step-size adaptation on functions that are Lipschitz smooth and strongly convex around the global optimum and their strictly increasing transformation (i.e., $\mathcal{P}_{d, L, U}^{\gamma}$ defined in \Cref{def:problem}). To the best of our knowledge, this is the first study that shows the convergence rate dependency of the form $\frac{L}{d U}$ on dimension $d$, Lipschitz smoothness parameter $U$, and strong convexity parameter $L$ explicitly on $\mathcal{P}_{d, L, U}^{\gamma}$. The tighter convergence rate bounds of form $\frac{L}{\Tr(H)}$ on convex quadratic functions $\frac12 x^\T H x$ derived in this study provide further insight into the algorithmic behavior. In particular, the fact that the convergence rate can be independent of the condition number $\Cond(H) {\leq} U / L$ on functions where a few eigenvalues of $H$ are large and the others are all relatively small is a promising characteristic of the (1+1)-ES when applied to high dimensional problems with a few important variables.

Although our results are stated as asymptotic ones because our focus is to derive the convergence rate, we can easily derive a non-asymptotic bound if we limit our attention to Lipschitz smooth and strongly convex functions (i.e., $\mathcal{S}_{d,L,U}$). Noting that our potential function $V(\theta)$ satisfies $\log(f(m)) \leq V(\theta)$, we have
\begin{equation}
\begin{split}
    \MoveEqLeft[0]\E\left[\log\left( f(m_t) \right)\right] - \log\left(f(m_0)\right)\\
    &\leq \E\left[\log\left( V(\theta_t) \right)\right] - \log\left( V(\theta_0) \right) + \log\left( V(\theta_0) \right) - \log\left( f(m_0) \right)
    \\
    &\leq - 2\cdot B_{d, L, U}^\mathrm{upper} \cdot t +  \log\left( V(\theta_0) \right) - \log\left( f(m_0) \right) .
\end{split}
\end{equation}

Three strong points of the (1+1)-ES with success-based step-size adaptation over other derivative-free approaches analyzed on $\mathcal{S}_{d,L,U}$ are as follows. 
First, any hyper-parameters need not be set depending on some characteristic parameters such as $L$ and $U$.
Without the use of these parameters inside the algorithm, the convergence rate factor of $\frac{L}{d U}$ is achieved. 
As estimating $L$ and $U$ in simulation-based optimization settings is not trivial, it is a strong advantage of the (1+1)-ES.
Second, the (1+1)-ES is invariant to any strictly increasing transformation of the objective function; hence, our analysis is not limited to $\mathcal{S}_{d,L,U}$, but covers a broader class of functions, namely $\mathcal{P}_{d, L, U}^{\gamma}$.%$\mathcal{S}_{d,L,U}$.
Third, our results guarantee linear convergence of the (1+1)-ES, rather than locating an $\epsilon$-optimal solution such that $f(x) - f(x^*) \leq \epsilon$ for a finite $\epsilon$. 
Previous studies on derivative-free approaches \cite{2019gradientlesgolovins, nesterov2017random} set some hyper-parameters of the algorithms depending on this pre-defined threshold $\epsilon$ to derive these results.

We close our analysis with discussion on two limitations of this study. 
First, we have derived the convergence rate bound in \Cref{theorem:lower}.
However, the dependency of $B_{d, L, U}^\mathrm{upper}$ on $d$, $L$, and $U$ are not explicitly derived for finite $d$. 
Second, to evaluate $B_{d, L, U}^\mathrm{upper}$ in the limit $d \to \infty$, we have assumed that $\aup$ and $\adown$ are set such that $\Phi\left(-\sqrt{\frac{2}{\pi}}\right) < p^\mathrm{target} < \frac{1}{2}$, where $p^\mathrm{target} = \frac{\log(1/\adown)}{\log(\aup / \adown)}$. 
However, as $\Phi\left(-\sqrt{\frac{2}{\pi}}\right)\approx 0.212$, the most common setting of $\aup$ and $\adown$ is such that $p^\mathrm{target} = 1/5$ \cite{rechenberg1973evolution} and is not included in the above assumption.
These limitations will be addressed in future works.

\section*{Acknowledgments}
%This should be a simple paragraph before the References to thank those individuals and institutions who have supported your work on this article.
This paper is partially supported by JSPS KAKENHI Grant Number 19H04179.

\appendix

\section{Proof}

\subsection{Proof of \Cref{prop:invariance}}\label{apdx:prop:invariance}

For the first claim, it is sufficient to show that $\mathcal{G}(\theta, z; f) = \mathcal{G}(\theta, z; g \circ f)$ for all $\theta \in \Theta$ and $z \in \R^d$. This is trivial because $f(m + \sigma \cdot z) \leq f(m) \Leftrightarrow g(f(m + \sigma \cdot z)) \leq g(f(m))$.

For the second claim, we first show that $\mathcal{G}((m, \sigma), z; f) = \mathcal{G}((m+x^*, \sigma), z; f \circ T)$ for all $\theta \in \Theta$ and $z \in \R^d$. Because $f(T(m + x^* + \sigma \cdot z)) \leq f(T(m + x^*)) \Leftrightarrow f(m + \sigma \cdot z) \leq f(m)$, it is clear that $\mathcal{G}((m, \sigma), z; f) = \mathcal{G}((m+x^*, \sigma), z; f \circ T)$. Assume that $\tilde{m}_{t} = m_t + x^*$ and $\tilde{\sigma}_t = \sigma_t$. Then, we have $\mathcal{G}(\theta_t, z; f) = \mathcal{G}(\tilde{\theta}_t, z; f \circ T)$, and hence $\tilde{m}_{t+1} = m_{t+1} + x^*$ and $\tilde{\sigma}_{t+1} = \sigma_{t+1}$. Because the assumption holds for $t = 0$, by mathematical induction, we obtain the second claim.

For the third claim, it is sufficient to show that $\mathcal{G}(\theta, z; f) = \mathcal{G}(\theta, z; \tilde{f})$ for all $\theta \in \Theta$ and $z \in \R^d$. Because the (1+1)-ES guarantees the monotone improvement of $f(m_t)$, we have $m_t \in S(m_0)$. Therefore, $\tilde{f}(m_t) = f(m_t)$ for all $t \geq 0$. We have $f(m_t + \sigma_t \cdot z_t) \leq f(m_t) \Rightarrow \tilde{f}(m_t + \sigma_t \cdot z_t) \leq \tilde{f}(m_t)$. Moreover, because $\tilde{f}(x) > \tilde{f}(m_0)$ for all $x \in \R^d\setminus S(m_0)$, we have $\tilde{f}(m_t + \sigma_t \cdot z_t) \leq \tilde{f}(m_t) \Rightarrow m_t + \sigma_t \cdot z_t \in S(m_0)$, implying $f(m_t + \sigma_t \cdot z_t) \leq f(m_t)$. Hence, $f(m_t + \sigma_t \cdot z_t) \leq f(m_t) \Leftrightarrow \tilde{f}(m_t + \sigma_t \cdot z_t) \leq \tilde{f}(m_t)$ and we obtain $\mathcal{G}(\theta, z; f) = \mathcal{G}(\theta, z; \tilde{f})$.

\subsection{Proof of \Cref{prop:convergence_rate}}\label{apdx:prop:convergence_rate}
Let $Z_{t+1} = X_{t+1} - \E[X_{t+1} \mid \mathcal{F}_t]$.
Then, $\{Z_{t}\}_{t \geq 1}$ is a martingale difference sequence adapted to $\{\mathcal{F}_{t}\}$, and
\begin{multline}
\frac1t (X_t - X_0)
= \frac1t \sum_{i=1}^{t}(X_i - X_{i-1})
= \frac1t \sum_{i=1}^{t}(Z_i + \E[X_i - X_{i-1} \mid \mathcal{F}_{i-1}])
\\
= \frac1t \sum_{i=1}^{t}Z_i + \frac1t \sum_{i=1}^{t} \E[X_i - X_{i-1} \mid \mathcal{F}_{i-1}]
.
\end{multline}
Suppose that C3 holds.
{\color{blue}Beucause $\Var[X_t] = \Var[Z_t] + \Var[\E[X_t\mid\mathcal{F}_{t-1}]] \geq \Var[Z_t]$, }
C3 immediately implies that $\sum_{t=1}^{\infty}{\color{blue}\E[Z_{t}^2]} / t^2 < \infty$. Consequently, from the strong law of large numbers of martingale \cite{chow1967strong}, we obtain $\lim_{t\to\infty} \frac1t \sum_{i=1}^{t} Z_t = 0$ almost surely. 
We obtain statement (I) by taking $\limsup$ of the equation above and using C1 and $\limsup_{t\to\infty} \frac1t \sum_{i=1}^{t} Z_t = 0$.
Similarly, we obtain statement (II) by taking $\liminf$ and using C2 and $\liminf_{t\to\infty} \frac1t \sum_{i=1}^{t} Z_t = 0$. This completes the proof.

\subsection{Proof of \Cref{prop:f-norm}}\label{apdx:prop:f-norm}
Let $T: x \mapsto x - x^\text{opt}$. Then, $T^{-1}(x) = x + x^\text{opt}$ and $S_T$ in (2) of \Cref{prop:invariance} is equal to $S$ in \Cref{prop:f-norm}.
Hence, in light of (1) and (2) of \Cref{prop:invariance}, we have $\tilde{\theta}_t = S(\theta_t)$ for all $t \geq 0$.
This implies $\norm{\tilde{m}_t - x^\mathrm{opt}} = \norm{m_t}$ for all $t \geq 0$. Hence, we obtain \eqref{eq:liminf}. 
Moreover, under \Cref{asm:L_U_2diff}, for any $x \in \R^d$, we have
$\frac{2 f(x)}{ U } \leq \norm{x}^2 \leq \frac{2 f(x)}{ L }$.
Then, we obtain
$\frac{1}{2}\log \left(\frac{L}{U}\right)
\leq \log\left(\frac{ \norm{m_t}} { \norm{m_0} }\right) - \frac{1}{2}\log\left(\frac{f(m_t)}{f(m_0)}\right)
\leq \frac{1}{2}\log \left(\frac{U}{L}\right)$.
% \begin{equation}
% \frac{1}{2}\log \left(\frac{L}{U}\right)
% \leq \log\left(\frac{ \norm{m_t}} { \norm{m_0} }\right) - \frac{1}{2}\log\left(\frac{f(m_t)}{f(m_0)}\right)
% \leq \frac{1}{2}\log \left(\frac{U}{L}\right)  .
% \end{equation}
Taking $\limsup$ after multiplying all terms by $\frac{1}{t}$, we obtain \eqref{eq:limsup}.

\subsection{Proof of \Cref{lemma:variance}}\label{apdx:lemma:variance}

%The positivity of $\mathcal{Q}_z(\theta)$ is straight-forward from $L$-strong convexity of $f$.

We prove the statements one by one.

1. The $L$-strong convexity and $U$-Lipschitz smoothness of $f$ implies 
% \begin{equation}
%     L \norm{z}^2 \leq \mathcal{Q}_z(\theta) \leq U \norm{z}^2.
% \end{equation}
$L \norm{z}^2 \leq \mathcal{Q}_z(\theta) \leq U \norm{z}^2$.
This inequality and $\E[\norm{z}^2] = d$ complete the proof for the first statement.

2. \cite[Therem2]{chen1982inequality}
shows that
% \begin{align}
% \Var[\mathcal{Q}_z(\theta)] \leq \E[ \norm{\nabla_z \mathcal{Q}_z(\theta)}^2 ] .
% \end{align}
$\Var[\mathcal{Q}_z(\theta)] \leq \E[ \norm{\nabla_z \mathcal{Q}_z(\theta)}^2 ]$.
Using this fact and the $U$-Lipschitz continuity of $\nabla f$, we have
\begin{equation}
\begin{split}
\Var[\mathcal{Q}_z(\theta)] 
&\leq \E[ \norm{\nabla_z \mathcal{Q}_z(\theta)}^2 ] \\
&= 
%\frac{4}{\sigma^2} 
(4 / \sigma^2) 
\E[ \norm{ \nabla f(m + \sigma z) - \nabla f(m) }^2 ] 
\\
&\leq 
%\frac{4}{\sigma^2} 
(4 / \sigma^2) 
\E[ U^2 \sigma^2 \norm{ z }^2 ] 
= 4 d U^2 .
\end{split}
\end{equation}

3. We use the Cauchy-Schwarz inequality and obtain
\begin{equation}
\begin{split}
\MoveEqLeft[1]
\E[\mathcal{Q}_z(\theta) \cdot \ind{z_e \leq 0}] \\
&= \E[\E[\mathcal{Q}_z(\theta)] \cdot \ind{z_e \leq 0}] + \E[(\mathcal{Q}_z(\theta) - \E[\mathcal{Q}_z(\theta)]) \cdot \ind{z_e \leq 0}]
\\
&= \E[\mathcal{Q}_z(\theta)] / 2 + \E[(\mathcal{Q}_z(\theta) - \E[\mathcal{Q}_z(\theta)]) \cdot \ind{z_e \leq 0}]
\\
&\leq \E[\mathcal{Q}_z(\theta)] / 2 + (\E[(\mathcal{Q}_z(\theta) - \E[\mathcal{Q}_z(\theta)])^2] \cdot \E[\ind{z_e \leq 0}])^{1/2}
\\
&= \E[\mathcal{Q}_z(\theta)] / 2 + (\Var[\mathcal{Q}_z(\theta)] / 2)^{1/2} ,
\end{split}
\end{equation}
and similarly,
\begin{equation}
\begin{split}
\MoveEqLeft[1]
\E[\mathcal{Q}_z(\theta) \cdot \ind{z_e \leq 0}] \\
&= \E[\mathcal{Q}_z(\theta)] / 2 + \E[(\mathcal{Q}_z(\theta) - \E[\mathcal{Q}_z(\theta)]) \cdot \ind{z_e \leq 0}]
\\
&= \E[\mathcal{Q}_z(\theta)] / 2 - \E[(\mathcal{Q}_z(\theta) - \E[\mathcal{Q}_z(\theta)]) \cdot \ind{z_e > 0}]
\\
&\geq \E[\mathcal{Q}_z(\theta)] / 2 - (\E[(\mathcal{Q}_z(\theta) - \E[\mathcal{Q}_z(\theta)])^2] \cdot \E[\ind{z_e > 0}])^{1/2}
\\
&= \E[\mathcal{Q}_z(\theta)] / 2 - (\Var[\mathcal{Q}_z(\theta)] / 2)^{1/2} .
\end{split}
\end{equation}
Note that we have $\E[\mathcal{Q}_z(\theta)] \geq d L$. 
We have,
\begin{equation}
    \begin{split}
(\Var[\mathcal{Q}_z(\theta)]/ 2)^{1/2} 
&\leq (2 d)^{1/2} U 
\\
&= \E[\mathcal{Q}_z(\theta)] ((2 d)^{1/2} U / \E[\mathcal{Q}_z(\theta)] )
\\
&\leq \E[\mathcal{Q}_z(\theta)] ((2 / d)^{1/2} (U / L) ) .
\end{split}
\end{equation}
% \begin{multlined}[t]
% (\Var[\mathcal{Q}_z(\theta)] / 2)^{1/2} 
% \leq (2 d)^{1/2} U \\
% = \E[\mathcal{Q}_z(\theta)] ((2 d)^{1/2} U / \E[\mathcal{Q}_z(\theta)] ) 
% \leq \E[\mathcal{Q}_z(\theta)] ((2 / d)^{1/2} (U / L) ) .
% \end{multlined}
This ends the proof.

\subsection{Proof of \Cref{cor:tr_cond}}\label{apdx:cor:tr_cond}
In light of \Cref{lemma:variance}, we have for any $\theta \in \Thetazero$
% \begin{equation}
% \abs*{
% \frac{\E[\mathcal{Q}_z(\theta)]}{\E[\mathcal{Q}_z(\theta)\cdot\ind{z_e\leq 0}]} - 2
% }
% \leq \frac{ 1 }{ (2 / d)^{1/2} (U/L) } \xrightarrow{d \to \infty} 0 .
% \end{equation}
\begin{equation}
\abs*{
\frac{\E[\mathcal{Q}_z(\theta)\cdot\ind{z_e\leq 0}]}{\E[\mathcal{Q}_z(\theta)]} - \frac{1}{2}
}
\leq (2 / d)^{1/2} (U/L) \xrightarrow{d \to \infty} 0 .
\end{equation}
It implies that for any $\epsilon > 0$ there exists $D' \in \N$ such that $\kappa_{\inf} \in (2 - \epsilon, 2+\epsilon)$ for all $d \geq D'$. 
Then, the RHS of \eqref{eq:tr_cond} is lower bounded by a positive constant, say $c > 0$, for all $d \geq D'$. 
Again in light of \Cref{lemma:variance}, we have
\begin{equation}
    \mathcal{V}_\mathrm{std}
    \leq 4 d U^2/ (d L)^2 = 4 U^2/ (d L^2 ) \xrightarrow{d \to \infty} 0.
\end{equation}
%\begin{equation}
%    \mathcal{V}_\mathrm{std}
%    \leq \frac{4 d U^2}{ (d L)^2 } = \frac{4 U^2}{ d L^2 } \xrightarrow{d \to \infty} 0.
%\end{equation}
It implies that there exists $D'' \in \N$ such that the LHS of \eqref{eq:tr_cond} is strictly smaller than the above defined $c > 0$ for all $d \geq D''$. 
Therefore, letting $D = \max\{D', D''\}$, we obtain \eqref{eq:tr_cond} for all $d \geq D$. This completes the proof.

\subsection{Proof of \Cref{lemma:qualitygain}}\label{apdx:lemma:qualitygain}

%\begin{proof}%[Proof of \Cref{lemma:qualitygain}]
Let $G_a(z) := (f(m + \sigma z) - f(m) ) \ind{ z_e \leq 0}$ and $G_b(z) := \ind{f(m + \sigma z) \leq f(m)}$. 
First, we prove that the LHS of \eqref{eq:qualitygain} is upper-bounded by $\E[G_a(z)]\E[G_b(z)] / f(m)$.
Because $f: \R^d \to \R$ is a differentiable convex function, we have the following property: 
(A) $\ind{f(m + \sigma z) \leq f(m)} \leq \ind{ z_e \leq 0}$, implying that $\ind{f(m + \sigma z) \leq f(m)} = \ind{ z_e \leq 0}\ind{f(m + \sigma z) \leq f(m)}$;
(B) $G_a(z)$ and $G_b(z)$ are negatively correlated, that is, $(G_a(z_1) - G_a(z_2))(G_b(z_1) - G_b(z_2)) \leq 0$ for all $z_1, z_2 \in \R^d$.
% \begin{enumerate}
% \item[(A)] $\ind{f(m + \sigma z) \leq f(m)} \leq \ind{ z_e \leq 0}$, implying that $\ind{f(m + \sigma z) \leq f(m)} = \ind{ z_e \leq 0}\ind{f(m + \sigma z) \leq f(m)}$;
% \item[(B)] $G_a(z)$ and $G_b(z)$ are negatively correlated, i.e., $(G_a(z_1) - G_a(z_2))(G_b(z_1) - G_b(z_2)) \leq 0$ for all $z_1, z_2 \in \R^d$.
% \end{enumerate}
Because $((f(m + \sigma z) - f(m)) / f(m)) \ind{f(m + \sigma z) \leq f(m)} = G_a(z) G_b(z) / f(m)$, where we used (A), the LHS of \eqref{eq:qualitygain} is  $\E[G_a(z) G_b(z)] / f(m)$.
Using (B) along with the chebyshev sum inequality \cite[Theorem 236]{hardy1952inequalities}, we have $\E[G_a(z)G_b(z)] \leq \E[G_a(z)]\E[G_b(z)]$. Hence, the LHS of \eqref{eq:qualitygain} is upper-bounded by $\E[G_a(z)]\E[G_b(z)] / f(m)$.

Next, we prove that the RHS of \eqref{eq:qualitygain} is $\E[G_a(z)]\E[G_b(z)] / f(m)$. 
First, we have $\E[G_b(z)] = \E[\ind{f(m + \sigma z) \leq f(m)}] = \Pr[f(m + \sigma z) \leq f(m)]$.
Second,
\begin{equation}
\begin{aligned}[t]
&\E[G_a(z)]
%\\
%=&\E[(f(m + \sigma z) - f(m)) \ind{z_e\leq 0}]
\\
=&
  \E\left[\left(\sigma \| \nabla f(m) \| z_e + \frac{\sigma^2 }{2} \mathcal{Q}_z(\theta) \right) \ind{z_e \leq 0}\right]
\\
=& - \frac{\sigma \| \nabla f(m) \|}{\sqrt{2\pi}} + \frac{\sigma^2}{2}\cdot\E\left[\mathcal{Q}_z(\theta) \cdot \ind{z_e \leq 0}\right]
\\
=& \sigma \norm{\nabla f(m)}\cdot\left(
- \frac{1}{\sqrt{2\pi}}
+ \frac{\sigma}{2\norm{\nabla f(m)}}\cdot\E\left[\mathcal{Q}_z(\theta) \cdot \ind{z_e \leq 0}\right]
\right)
%\\
%&\leq \sigma \norm{\nabla f(m)}\cdot\left(
%- \frac{1}{\sqrt{2\pi}}
%+ \frac{\sigma}{2\norm{\nabla f(m)}}\cdot\e\left[z^\mathrm{t} h_z(\theta) z\right]
%\right)
  .
\end{aligned}
\end{equation}
These equalities demonstrate that the RHS of \eqref{eq:qualitygain} is $\E[G_a(z)]\E[G_b(z)] / f(m)$.

\subsection{Proof of \Cref{lemma:variancebound}}\label{apdx:lemma:variancebound}

Under \Cref{asm:L_U_2diff}, for any $x \in \R^d$, we have
$\frac{2 f(x)}{ U } \leq \norm{x}^2 \leq \frac{2 f(x)}{ L }$.
Then, we have
\begin{equation}
\begin{aligned}
\MoveEqLeft[3]
\log\left(\frac{f(m+\sigma z)}{f(m)}\right)\cdot\ind{f(m+\sigma z)\leq f(m)}
\\
=&
\min\left\{\log\left(\frac{f(m+\sigma z)}{f(m)}\right), 0 \right\}
\\
\geq&
\min\left\{
\log\left(\frac{\norm{m+\sigma z}^2}{\norm{m}^2}\right) + \log\left(\frac{L}{U}\right)
, 0 \right\}
\\
=&
\min\Bigg\{
\log\left(
1 + \frac{\norm{z}^2}{\norm{m}^2}\left(
\left(\sigma + \frac{\langle m, z \rangle}{\norm{z}^2}\right)^2
-\left(\frac{\langle m, z \rangle}{\norm{z}^2}\right)^2
\right)
\right) 
\\
&+ \log\left(\frac{L}{U}\right)
, 0 \Bigg\}
\\
\geq&
\log\left(
1 
-\frac{\langle m, z \rangle^2}{\norm{m}^2\cdot\norm{z}^2}
\right)
+ \log\left(\frac{L}{U}\right)
.
\end{aligned}
\end{equation}
Therefore,
\begin{equation}
\begin{aligned}[t]
&\exp\left(
\abs*{
\log\left(\frac{f(m+\sigma z)}{f(m)}\right)\cdot\ind{f(m+\sigma z)\leq f(m)}
}
\right)
\\
&\leq
\exp\left(
-
\log\left(
\frac{L}{U}\cdot\left(1 
-\frac{\langle m, z \rangle^2}{\norm{m}^2\cdot\norm{z}^2}
\right)
\right)
\right)
% =
% \frac{U}{L}\cdot
% \frac{\norm{z}^2}{\norm{z}^2-\left\langle \frac{m}{\norm{m}}, z \right\rangle^2}
\\
&=
\frac{U}{L}\cdot \left( 1 + \frac{1}{d-1}\cdot \frac{(d-1) \inner*{\frac{m}{\norm{m}}}{z}^2}{\norm{z}^2-\inner*{\frac{m}{\norm{m}}}{z}^2}\right)
.
\end{aligned}
\end{equation}
% Let $\{\mathcal{N}_i\}_{i=1}^d$ be a set of i.i.d random variables which follow the standard normal distribution.
% The RMS is stochastically dominated by
% % \begin{equation}
% % \frac{U}{L}\cdot
% % \left(
% % 1+ \frac{\mathcal{N}_1^2}{\sum_{i=2}^d\mathcal{N}_i^2}
% % \right)
% % .
% % \end{equation}
% $\frac{U}{L}\cdot\left(1+ \frac{\mathcal{N}_1^2}{\sum_{i=2}^d\mathcal{N}_i^2}\right)$.
% A fact is $\frac{(d-1)\cdot\mathcal{N}_1^2}{\sum_{i=2}^d\mathcal{N}_i^2}$ follows F-distribution and its expectation is $(d-1)/(d-3)$.
Here, $\frac{(d-1) \inner*{\frac{m}{\norm{m}}}{z}^2}{\norm{z}^2-\inner*{\frac{m}{\norm{m}}}{z}^2}$ is $F$-distributed and its expectation is $(d-1) / (d-3)$. 
Finally, we obtain \eqref{eq:lemma:variancebound}.
%\begin{multline}
%\E\left[\exp\left(
%\abs*{
%\log\left(\frac{f(m+\sigma z)}{f(m)}\right)\cdot\ind{f(m+\sigma z)\leq f(m)}
%}
%\right)
%\right]
%\\
%\leq
%\frac{U}{L}\cdot
%\left(
%1+\frac{1}{d-3}
%\right)
%.
%\end{multline}
This completes the proof.

\subsection{Proof of \Cref{lemma:successprobability}}\label{apdx:lemma:successprobability}

Let 
$A = \bar\sigma/2$
%\frac{\sigma \E[\mathcal{Q}_z(\theta)]}{2 \| \nabla f(m) \| }$
and $B = \mathcal{Q}_z(\theta)/\E[\mathcal{Q}_z(\theta)]$.
First, we derive a few inequalities before deriving the upper bound and the lower bound.
By using Chebyshev's inequality, we obtain
\begin{align}
\Pr\left[\left| B - 1 \right|\geq \epsilon\right]
%=
%\Pr\left[\left| \frac{\mathcal{Q}_z(\theta)}{\E[\mathcal{Q}_z(\theta)]} - 1 \right|\geq \epsilon\right]
\leq
\frac{1}{\epsilon^2} \cdot \mathcal{V}_\mathrm{std}
.
\end{align}
Because of \eqref{eq:q-def}, we can write the success probability as
\begin{equation}
\begin{aligned}[t]
\Pr[ f(m + \sigma z) \leq f(m)]
%&=
%\Pr\left[ \sigma \| \nabla f(m) \| z_e \leq - \frac{\sigma^2 }{2} \mathcal{Q}_z(\theta) \right] 
%\\
%&= \Pr\left[ z_e \leq -\frac{\bar\sigma}{2} \frac{\mathcal{Q}_z(\theta) }{\E[\mathcal{Q}_z(\theta)]} \right]
= \Pr\left[ z_e \leq - A \cdot B\right]
.
\end{aligned}
\end{equation}

Now, we derive the upper bound of the success probability as
\begin{equation}
\begin{aligned}[t]
%\MoveEqLeft[3]
\Pr\left[f(m + \sigma z) \leq f(m) \right]
%=&
%\Pr\left[ z_e \leq - A \cdot B \right]
%\\
=&
\Pr\left[ \left(z_e \leq - A \cdot B\right)\wedge \left( B > 1-\epsilon\right)\right]
\\
&+\Pr\left[ \left(z_e \leq - A \cdot B\right)\wedge\left(B \leq 1-\epsilon\right)\right]
%\\
%\leq&
%\Pr\left[ z_e \leq - A \cdot(1-\epsilon)\right]
%+\Pr\left[ B \leq 1-\epsilon\right]
\\
\leq&
\Pr\left[ z_e \leq - A \cdot(1-\epsilon)\right]
+\Pr\left[ \left| B -1 \right| \geq \epsilon\right]
\\
\leq&
\Phi\left( - \frac12 \bar\sigma\cdot(1-\epsilon)\right)
+ \frac{1}{\epsilon^2} \cdot \mathcal{V}_\mathrm{std}
.
\end{aligned}
\end{equation}
% \begin{equation}
% \begin{aligned}[t]
% \MoveEqLeft[3]\Pr\left[f(m + \sigma z) \leq f(m) \right]
% \\
% =&
% \Pr\left[ z_e \leq - \frac12 \bar\sigma \frac{\mathcal{Q}_z(\theta)}{\E[\mathcal{Q}_z(\theta)]} \right]
% \\
% =&
% \Pr\left[ \left(z_e \leq - \frac12 \bar\sigma\cdot\frac{\mathcal{Q}_z(\theta)}{\E[\mathcal{Q}_z(\theta)]}\right)\wedge \left(\frac{\mathcal{Q}_z(\theta)}{\E[\mathcal{Q}_z(\theta)]}> 1-\epsilon\right)\right]
% \\
% &+\Pr\left[ \left(z_e \leq - \frac12 \bar\sigma\cdot\frac{\mathcal{Q}_z(\theta)}{\E[\mathcal{Q}_z(\theta)]}\right)\wedge\left(\frac{\mathcal{Q}_z(\theta)}{\E[\mathcal{Q}_z(\theta)]}\leq 1-\epsilon\right)\right]
% \\
% \leq&
% \Pr\left[ z_e \leq - \frac12 \bar\sigma\cdot(1-\epsilon)\right]
% +\Pr\left[ \frac{\mathcal{Q}_z(\theta)}{\E[\mathcal{Q}_z(\theta)]}\leq 1-\epsilon\right]
% \\
% \leq&
% \Pr\left[ z_e \leq - \frac12 \bar\sigma\cdot(1-\epsilon)\right]
% +\Pr\left[ \left| \frac{\mathcal{Q}_z(\theta)}{\E[\mathcal{Q}_z(\theta)]} -1 \right| \geq \epsilon\right]
% \\
% \leq&
% \Phi\left( - \frac12 \bar\sigma\cdot(1-\epsilon)\right)
% + \frac{1}{\epsilon^2} \cdot \mathcal{V}_\mathrm{std}
% .
% \end{aligned}
% \end{equation}

Analogously, we derive the lower bound:
\begin{equation}
\begin{aligned}[t]
%\MoveEqLeft[2]
\Pr\left[f(m + \sigma z) > f(m) \right]
%\\
%=&
%\Pr\left[ z_e > - A \cdot B \right]
%\\
%=&
%\Pr\left[ \left(z_e > - A \cdot B\right)\wedge\left(B < 1+\epsilon\right)\right]
%\\
%&+\Pr\left[ \left(z_e > - A \cdot B\right)\wedge\left(B \geq 1+\epsilon\right)\right]
%%\\
%%\leq&
%%\Pr\left[ z_e > - A \cdot(1+\epsilon)\right]
%%+\Pr\left[ B \geq 1+\epsilon\right]
%\\
\leq&
\Pr\left[ z_e > - A \cdot(1+\epsilon)\right]
+\Pr\left[ \left| B -1 \right| \geq \epsilon\right]
\\
\leq&
%\Pr\left[ z_e > - \frac12 \bar\sigma\cdot(1+\epsilon)\right]
%+ \frac{1}{\epsilon^2} \cdot \mathcal{V}_\mathrm{std}
%\\
%=&
1 - \Phi\left( - \frac12 \bar\sigma\cdot(1+\epsilon)\right)
+ \frac{1}{\epsilon^2} \cdot \mathcal{V}_\mathrm{std}
.
\end{aligned}
\end{equation}
% \begin{equation}
% \begin{aligned}[t]
% \MoveEqLeft[3]\Pr\left[f(m + \sigma z) > f(m) \right]
% \\
% =&
% \Pr\left[ z_e > - \frac12 \bar\sigma \frac{\mathcal{Q}_z(\theta)}{\E[\mathcal{Q}_z(\theta)]} \right]
% \\
% =&
% \Pr\left[ \left(z_e > - \frac12 \bar\sigma\cdot\frac{\mathcal{Q}_z(\theta)}{\E[\mathcal{Q}_z(\theta)]}\right)\wedge\left(\frac{\mathcal{Q}_z(\theta)}{\E[\mathcal{Q}_z(\theta)]}< 1+\epsilon\right)\right]
% \\
% &+\Pr\left[ \left(z_e > - \frac12 \bar\sigma\cdot\frac{\mathcal{Q}_z(\theta)}{\E[\mathcal{Q}_z(\theta)]}\right)\wedge\left(\frac{\mathcal{Q}_z(\theta)}{\E[\mathcal{Q}_z(\theta)]}\geq 1+\epsilon\right)\right]
% \\
% \leq&
% \Pr\left[ z_e > - \frac12 \bar\sigma\cdot(1+\epsilon)\right]
% +\Pr\left[ \frac{\mathcal{Q}_z(\theta)}{\E[\mathcal{Q}_z(\theta)]}\geq 1+\epsilon\right]
% \\
% \leq&
% \Pr\left[ z_e > - \frac12 \bar\sigma\cdot(1+\epsilon)\right]
% +\Pr\left[ \left| \frac{\mathcal{Q}_z(\theta)}{\E[\mathcal{Q}_z(\theta)]} -1 \right| \geq \epsilon\right]
% \\
% \leq&
% \Pr\left[ z_e > - \frac12 \bar\sigma\cdot(1+\epsilon)\right]
% + \frac{1}{\epsilon^2} \cdot \mathcal{V}_\mathrm{std}
% \\
% =&
% 1 - \Phi\left( - \frac12 \bar\sigma\cdot(1+\epsilon)\right)
% + \frac{1}{\epsilon^2} \cdot \mathcal{V}_\mathrm{std}
% .
% \end{aligned}
% \end{equation}
%Hence, we have
%\begin{equation}
%\Pr\left[f(m + \sigma z) \leq f(m) \right] 
%\geq \Phi\left( - \frac12 \bar\sigma\cdot(1+\epsilon)\right)
%- \frac{1}{\epsilon^2} \cdot \mathcal{V}_\mathrm{std}
%.
%\end{equation}
This completes the proof.

\subsection{Proof of \Cref{cor:successprobability}}\label{apdx:cor:successprobability}

%%\begin{proof}%[Proof of \Cref{cor:successprobability}]

First, we prove \eqref{eq:cor:successprobability:large}.
Define 
\begin{equation}
B_\theta^\mathrm{high}(q; \epsilon) := 2\cdot\Phi^{-1} \left(1 - \left(q + \frac{1}{\epsilon^2} \cdot \mathcal{V}_\mathrm{std}\right) \right)/ (1 + \epsilon)
\label{eq:b_high_q_e}
\end{equation}
for $q\in(0, 1/2)$ and $\epsilon > \sqrt{\frac{2}{1-2q}\cdot\mathcal{V}_\mathrm{std}}$.
Apparently, $B_\theta^\mathrm{high}(q; \epsilon)$ is continuous with respect to both arguments and $B_\theta^\mathrm{high}(q; \epsilon) > 0$. 
Since $B_\theta^\mathrm{high}(q; \epsilon) \to 0$ as $\epsilon \downarrow \sqrt{\frac{2}{1-2q}\cdot\mathcal{V}_\mathrm{std}}$ and $B_\theta^\mathrm{high}(q; \epsilon) \to 0$ as $\epsilon \uparrow + \infty$, there must exist at least one $\epsilon$ for each $q$ such that $B_\theta^\mathrm{high}(q; \epsilon)$ is maximized. Let $\epsilon^\text{high}(q)$ be the smallest one of such maximizers. Then, $B_\theta^\text{high}(q) = B_\theta^\text{high}(q; \epsilon^\text{high}(q))$.

In light of \Cref{lemma:successprobability}, we have
\begin{equation}
\begin{aligned}[t]
%\MoveEqLeft[2]
\bar\sigma \leq  B_\theta^\mathrm{high}(q; \epsilon)
&\Leftrightarrow 
q\leq \Phi\left(-\frac12 \bar\sigma\cdot (1+\epsilon)\right) - \frac{1}{\epsilon^2}\cdot\mathcal{V}_\mathrm{std}
\\
&\Rightarrow
\Pr\left[f(m+\sigma z)\leq f(m)\right] > q
%\label{eq:7}
\end{aligned}
\end{equation}
for any $q\in(0, 1/2)$ and $\epsilon > \sqrt{\frac{2}{1-2q}\cdot\mathcal{V}_\mathrm{std}}$.
By letting $\epsilon = \epsilon^\text{high}(q)$, the above relation implies \eqref{eq:cor:successprobability:large}. This completes the proof of \eqref{eq:cor:successprobability:large}.
It is trivial to see that $B_\theta^\mathrm{high}(q) \leq 2 \Phi^{-1}(1-q)$ for all $q \in \left(0, \frac12\right)$.

Similarly, we prove \eqref{eq:cor:successprobability:small}.
Define
\begin{equation}
B_\theta^\mathrm{low}(q; \epsilon)
:= 
2 \cdot\Phi^{-1} \left(1 - \left(q - \frac{1}{\epsilon^2} \cdot \mathcal{V}_\mathrm{std}\right) \right)/ (1 - \epsilon)\label{eq:b_low_q_e}
\end{equation}
for $q\in\left( \mathcal{V}_\mathrm{std} , \frac{1}{2}\right)$ and $\epsilon\in \left(\sqrt{\frac{1}{q}\cdot\mathcal{V}_\mathrm{std}}, 1\right)$.
It is easy to see that $B_\theta^\mathrm{low}(q; \epsilon)$ is continuous with respect to both arguments and $B_\theta^\mathrm{low}(q; \epsilon) > 0$.
Moreover, since $B_\theta^\mathrm{low}(q; \epsilon) \to +\infty$ as $\epsilon \uparrow 1$ and $B_\theta^\mathrm{low}(q; \epsilon) \to +\infty$ as $\epsilon \downarrow \sqrt{\frac{1}{q}\cdot\mathcal{V}_\mathrm{std}}$, there must exist at least one $\epsilon$ for each $q$ such that $B_\theta^\mathrm{low}(q; \epsilon)$ is minimized. Let $\epsilon^\text{low}(q)$ be the smallest one of such minimizers. Then, $B_\theta^\text{low}(q) = B_\theta^\text{low}(q; \epsilon^\text{low}(q))$.

In light of \Cref{lemma:successprobability}, we have
\begin{equation}
\begin{aligned}[t]
    \label{eq:8}
\MoveEqLeft[2]
\bar\sigma \leq 
B_\theta^\mathrm{low}(q; \epsilon)
\Leftrightarrow
q\geq \Phi\left(-\frac12 \bar\sigma\cdot (1-\epsilon)\right) + \frac{1}{\epsilon^2}\cdot\mathcal{V}_\mathrm{std}
\\
&\Rightarrow
\Pr\left[f(m+\sigma z)\leq f(m)\right] < q
,
\end{aligned}
\end{equation}
for any $q\in\left( \mathcal{V}_\mathrm{std} , \frac{1}{2}\right)$ and $\epsilon\in \left(\sqrt{\frac{1}{q}\cdot\mathcal{V}_\mathrm{std}}, 1\right)$.
By letting $\epsilon = \epsilon^\text{low}(q)$, the above relation implies \eqref{eq:cor:successprobability:small}. This completes the proof of \eqref{eq:cor:successprobability:small}.
It is trivial to see that $B_\theta^\mathrm{high}(q) \geq 2 \Phi^{-1}(1-q)$ for all $q\in\left( \mathcal{V}_\mathrm{std} , \frac{1}{2}\right)$.

\subsection{Proof of \Cref{lemma:cases}}\label{apdx:lemma:cases}

%%\begin{proof}%[Proof of \Cref{lemma:cases}]
Proof of (i). 
The 1st term in $\max$ of the lower limit of $I_q$ is smaller than $1/8$. This is because the RHS of \eqref{eq:tr_cond} cannot be greater than or equal to $1/8$; hence, \eqref{eq:tr_cond} implies that 
$\underset{\theta\in\Thetazero}{\sup} \left\{\mathcal{V}_\mathrm{std}\right\} < 1/8$. 
%$\underset{\theta\in\Thetazero}{\sup} \left\{\Var[\mathcal{Q}_z(\theta)] / (\E[\mathcal{Q}_z(\theta)])^2\right\} < 1/8$. 
Therefore, it suffices to show that the 2nd term in $\max$ of the lower limit of $I_q$ is smaller than $1/2$.
Namely, we want to show that there exists $\bar{q} < 1/2$ such that $\kappa_{\inf} \cdot \sqrt{\frac{2}{\pi}} > B_{\sup}^\mathrm{low}(q)$ for all $q \in (\bar{q}, 1/2)$.
% we want to show $         
%   \inf\left\{
%         q:
%         \kappa_{\inf}
%         \cdot
%         \sqrt{\frac{2}{\pi}} > 
%         \underset{\theta\in\Theta\setminus\{\theta|m\neq x^*\}}{\sup}
%         B_\theta^\mathrm{low}(q) 
%         \right\}
%         < \frac12$.
Let $B_\theta^\text{low}(q;\epsilon)$ as defined in \eqref{eq:b_low_q_e}. 
Because 
$\mathcal{V}_\mathrm{std}< 1/8$,
%$\Var[\mathcal{Q}_z(\theta)] / (\E[\mathcal{Q}_z(\theta)])^2< 1/8$,
$B_\theta^\text{low}(q; \epsilon)$ is defined at least for $q \in [1/8, 1/2)$ and $\epsilon \in [1/\sqrt{8 q}, 1)$.
% and we have $B_\theta^\text{low}(q; 1/2) \geq B_\theta^\text{low}(q)$ for $q \in [1/8, 1/2)$ for any $\theta\in\Theta\setminus\{\theta\mid m\neq x^*\}$ as $1/2 \in [1/\sqrt{8 q}, 1)$ for any $q \in [1/8, 1/2)$. 
If we let 
\begin{equation}
\delta_1 = \Phi\left(\frac{\kappa_{\inf}}{2}\cdot\frac{1}{\sqrt{2\pi}}\right) - \frac12 - 4\cdot\underset{\theta\in\Thetazero}{\sup}
\mathcal{V}_\mathrm{std} ,
\end{equation}
it is strictly positive because of \eqref{eq:tr_cond}.    
For any $q \in [1/8, 1/2)$, we have
\begin{equation}
\begin{split}
\label{eq:sup_b_low_upper}
&B_{\sup}^\text{low}(q)
\leq
\sup_{\theta \in \Theta_0} B_\theta^\mathrm{low}(q; \epsilon=1/\sqrt{8q})
\\
&=
\sup_{\theta \in \Theta_0}
    \frac{2}{1 - 1/\sqrt{8q}} \Phi^{-1} \left(1 - q + 8 q \cdot \mathcal{V}_\mathrm{std}\right)
\\
&=
\frac{2}{1 - 1/\sqrt{8q}} \Phi^{-1} \left( 1 + 2q \left(\Phi\left(\frac{\kappa_{\inf}}{2}\cdot\frac{1}{\sqrt{2\pi}}\right) - 1 - \delta_1 \right)\right) .
\end{split}
\end{equation}    
Because the RMS of \eqref{eq:sup_b_low_upper} is continuous with respect to $q$ and it approaches $4 \Phi^{-1} \left( \Phi\left(\frac{\kappa_{\inf}}{2}\cdot\frac{1}{\sqrt{2\pi}}\right) - \delta_1 \right) <
4 \Phi^{-1} \left( \Phi\left(\frac{\kappa_{\inf}}{2}\cdot\frac{1}{\sqrt{2\pi}}\right) \right) = \kappa_{\inf}\cdot \sqrt{2 / \pi}$ as $q \to 1/2$, there exists a $\bar{q} \in [1/8, 1/2)$ such that 
$B_{\sup}^\text{low}(q) < \kappa_{\inf}\cdot\sqrt{\frac{2}{\pi}}$ holds for all $q \in (\bar{q}, 1/2)$. 
This completes the proof for (i).

Proof of (ii).  
Let $B_\theta^\mathrm{high}(\cdot;\epsilon)$ as defined in \eqref{eq:b_high_q_e}.
To prove that $I_q^\mathrm{high}(q^\mathrm{low})$ is nonempty, it suffices to show there exists $\bar{q} < 1/2$ such that 
$B_\theta^\mathrm{low}(q^\mathrm{low}) > \frac{\aup}{\adown}\cdot B_\theta^\mathrm{high}(q)$ holds for all $\theta \in \Theta_0$ and for all $q \in (\bar{q}, 1/2)$.
Let $\bar{q} = \Phi\left( - \left(\adown / \aup\right)\cdot \Phi^{-1}\left(1 - q^\mathrm{low} \right) \right)$.
Then $\bar{q} \in (0, 1/2)$. 
Because $B_\theta^\mathrm{low}(q) \geq 2\Phi^{-1}(1-q) \geq B_\theta^\mathrm{high}(q)$ for all $q$ for which $B_\theta^\mathrm{low}(q)$ and $B_\theta^\mathrm{high}(q)$ are defined, we have for all $q \in [\bar{q}, 1/2)$ and for all $\theta \in \Theta_0$
\begin{multline}
\frac{\adown}{\aup} \cdot B_\theta^\mathrm{low}(q^\mathrm{low}) 
\geq 2 \frac{\adown}{\aup} \Phi^{-1}(1-q^\mathrm{low}) 
\\
= 2\Phi^{-1}(1-\bar{q}) 
\geq 2\Phi^{-1}(1-q) 
\geq B_\theta^\mathrm{high}(q) .
\end{multline}
This completes the proof of (ii).

%   Since $2\Phi^{-1}(1-q)$ is strictly decreasing with respect to $q$ and $2\Phi^{-1}(1-q) \downarrow 0$ as $q \uparrow 1/2$, there exists a constant $q' \in \left(0, \frac12 \right)$ such that $2\Phi^{-1}(1-q) < (\adown / \aup) \cdot B_\theta^\mathrm{low}(q^\mathrm{low})$ for all $q \in \left(q', \frac12\right)$. Because $B_\theta^\mathrm{high}(q) \leq 2\Phi^{-1}(1-q)$ for any $q \in \left(q', \frac12\right)$, we have $B_\theta^\mathrm{high}(q) < (\adown / \aup) \cdot B_\theta^\mathrm{low}(q^\mathrm{low})$ for all $q \in \left(q', \frac12\right)$.
% Then, we have
% \begin{equation}
% (\max\{q', q^\mathrm{low}\}, 1/2) 
% \subseteq 
% \left\{
%         q \in \left(q^\mathrm{low}, \frac12 \right) :
%         B_\theta^\mathrm{low}(q^\mathrm{low}) 
%         > \frac{\aup}{\adown}\cdot B_\theta^\mathrm{high}(q)
% \right\}
% \end{equation}
% and hence $q^\mathrm{high} \leq \max\{q', q^\mathrm{low}\}$ exists.

Proof of (iii).
For any $q^\mathrm{low} \in I_q$, we have 
$\kappa_{\inf}\cdot\sqrt{\frac{2}{\pi}} > B_{\sup}^\mathrm{low}(q^\mathrm{low})$. 
Therefore, if there exists $\bar{q} > 0$ such that $B_{\inf}^\text{high}(q) \geq\kappa_{\inf}\cdot\sqrt{\frac{2}{\pi}}$ for all $q \in (0, \bar{q}]$, we have $Q \geq \bar{q} > 0$.
Because $\mathcal{V}_\mathrm{std} < \frac18$ under condition \eqref{eq:tr_cond}, $B_\theta^\mathrm{high}(q; \epsilon)$ is defined for any $q \in (0, 1/2)$ and at least for $\epsilon \in \left[\frac{1}{2\sqrt{1 - 2q}}, 1 \right)$. 
Let 
\begin{equation}
\delta_2 = 
 1 - \Phi\left( \frac{\kappa_{\inf}}{2}\cdot\frac{3}{\sqrt{2\pi}}\right)
 - 4\cdot\underset{\theta\in\Thetazero}{\sup}
\left\{\mathcal{V}_\mathrm{std}  \right\}
 .
\end{equation}
Then, it is positive because of \eqref{eq:tr_cond}. 
For any $q \in (0, 1/2)$, we have
\begin{equation}
\begin{aligned}[t]
\label{eq:inf_b_high_lower}
&B_{\inf}^\text{high}(q)
\geq
\underset{\theta \in \Theta_0}{\inf}\left\{
B_\theta^\text{high}\left(q; \epsilon=\frac{1}{2\sqrt{1 - 2q}}\right) \right\}
\\
&=
\underset{\theta \in \Theta_0}{\inf}
\Bigg\{
\frac{4 \sqrt{1 - 2q}}{2 \sqrt{1 - 2q} + 1} 
\cdot \Phi^{-1} \left(1 - q - 4(1 - 2q) \cdot \mathcal{V}_\mathrm{std}\right)  
\Bigg\}
\\
&=
\frac{4 \sqrt{1 - 2q}}{2 \sqrt{1 - 2q} + 1}
\\
&\quad\cdot \Phi^{-1} \left(1 - q - (1 - 2q) \cdot \left(1 - \Phi\left( \frac{\kappa_{\inf}}{2}\cdot\frac{3}{\sqrt{2\pi}}\right) - \delta_2\right) \right) 
.
\end{aligned}
\end{equation}
Because the RMS of \eqref{eq:inf_b_high_lower} is continuous with respect to $q$ and it approaches $\frac{4}{3} \Phi^{-1} \left( \Phi\left( \frac{\kappa_{\inf}}{2}\cdot\frac{3}{\sqrt{2\pi}}\right) + \delta_2\right) >\kappa_{\inf}\cdot\sqrt{\frac{2}{\pi}}$  as $q \to 0$, there exists a $\bar{q} \in (0, 1/2)$ such that 
$B_{\inf}^\text{high}(q) \geq \kappa_{\inf}\cdot\sqrt{\frac{2}{\pi}}$ holds for all $q \in (0, \bar{q}]$. 
This completes the proof of (iii).

Proof of (iv). 
Condition $\bar\sigma < B_{\inf}^\mathrm{high}(q^\mathrm{high})$ implies $\bar\sigma < B_\theta^\mathrm{high}(q^\mathrm{high})$ for all $\theta\in\Thetazero$.
%Condition $\sigma < B_{\inf}^\mathrm{high}(q^\mathrm{high}) \cdot \norm{\nabla f(m)} / \E[\mathcal{Q}_z(\theta)]$ implies $\sigma < B_\theta^\mathrm{high}(q^\mathrm{high}) \cdot \norm{\nabla f(m)} / \E[\mathcal{Q}_z(\theta)]$ for all $\theta\in\Thetazero$.
Then, from \Cref{cor:successprobability}, we immediately find \eqref{eq:lemma:toosmall}.

Proof of (v). 
Condition $\bar\sigma > B_{\sup}^\mathrm{low}(q^\mathrm{low})$ implies
$\bar\sigma > B_\theta^\mathrm{low}(q^\mathrm{low})$ for all $\theta\in\Thetazero$.
%Condition $\sigma > B_{\sup}^\mathrm{low}(q^\mathrm{low}) \cdot \norm{\nabla f(m)} / \E[\mathcal{Q}_z(\theta)]$ implies
%$\sigma > B_\theta^\mathrm{low}(q^\mathrm{low}) \cdot \norm{\nabla f(m)} / \E[\mathcal{Q}_z(\theta)]$ for all $\theta\in\Thetazero$.
Then, from \Cref{cor:successprobability}, we immediately find \eqref{eq:lemma:toolarge}.

Proof of (vi). 
Definition of $Q$ implies that $B_{\sup}^\mathrm{low}(q^\mathrm{low}) \leq B_{\inf}^\mathrm{high}(Q - \xi)$ for any $\xi > 0$. 
Therefore, condition $\bar\sigma \leq B_{\sup}^\mathrm{low}(q^\mathrm{low}) $ implies $\bar\sigma \leq B_{\theta}^\mathrm{high}(Q - \xi)$ for any $\theta\in\Thetazero$.
%Therefore, condition $\sigma \leq B_{\sup}^\mathrm{low}(q^\mathrm{low}) \cdot \norm{\nabla f(m)} / \E[\mathcal{Q}_z(\theta)]$ implies $\sigma \leq B_{\theta}^\mathrm{high}(Q - \xi) \cdot \norm{\nabla f(m)} / \E[\mathcal{Q}_z(\theta)]$ for any $\theta\in\Thetazero$.
From \Cref{cor:successprobability}, we find $\Pr\left[f(m + \sigma z) \leq f(m) \right] \geq Q - \xi$. Because $\xi > 0$ is arbitrary, we obtain \eqref{eq:lemma:reasonable}.

\subsection{Proof of \Cref{lemma:potential_decrease}}\label{apdx:lemma:potential_decrease}

In the following proofs, we use abbreviations of the indicator functions as follows:
% \begin{itemize}
% \item $\inds = \ind{s\sqrt{ Lf(m_{t+1})} \geq  E_\mathcal{Q}\cdot \sigma_{t+1}}$
% \item $\indl = \ind{\ell \sqrt{ L f(m_{t+1})} \leq E_\mathcal{Q} \cdot \sigma_{t+1}}$
% \end{itemize}
$\inds = \ind{s\sqrt{ Lf(m_{t+1})} \geq  E_\mathcal{Q}\cdot \sigma_{t+1}}$ and $\indl = \ind{\ell \sqrt{ L f(m_{t+1})} \leq E_\mathcal{Q} \cdot \sigma_{t+1}}$.
Note that $\inds$ and $\indl$ are exclusive to each other, as $s<\ell$.

Let 
\begin{equation}
\begin{aligned}[t]
    \Delta_s &= \log\left(\frac{\aup}{\adown}\right)\inds\inddown
    -\log\left(\aup\right)\cdot\inds
    \\
    &\quad+ \log\left(\frac{s\sqrt{ Lf(m_{t})}}{\sigma_t\cdot E_\mathcal{Q} }\right)\inds
    - \log^+ \left(\frac{s\sqrt{Lf(m_t)}}{\sigma_t \cdot E_\mathcal{Q}}\right),
    \label{eq:delta_s}
\end{aligned}
\end{equation}
\begin{equation}
\begin{aligned}[t]
    \Delta_{\ell} &=
    \log\left(\frac{\aup}{\adown}\right)\indl\indup
    -\log\left(\frac{1}{\adown}\right)\indl
    \\
    &\quad+ \log\left(\frac{\sigma_t\cdot E_\mathcal{Q}}{\ell\sqrt{ Lf(m_{t})}}\right)\indl
    - \log^+ \left(\frac{\sigma_t \cdot E_\mathcal{Q}}{\ell\sqrt{Lf(m_t)}}\right).
    \label{eq:delta_l}
\end{aligned}
\end{equation}
Then, we can write
\begin{multline}
V(\theta_{t+1}) - V(\theta_t)
\\
=
\left(1+(\inds-\indl)\frac{v}{2}\right)\cdot\log\left(\frac{f(m_{t+1})}{f(m_t)}\right)
+ v \cdot \Delta_s + v \cdot \Delta_\ell.
\label{eq:v_decompose}
\end{multline}

\paragraph{Proof of \eqref{eq:general_potential_decrease} for any $\theta\in\Theta\setminus\{\theta\mid m= x^*\}$}
Clearly, the sums of the last two terms of \eqref{eq:delta_s} and \eqref{eq:delta_l} are both non-positive and the second terms of \eqref{eq:delta_s} and \eqref{eq:delta_l} are both non-positive. 
The sum of the first terms of \eqref{eq:delta_s} and \eqref{eq:delta_l} is upper-bounded by $v\cdot\log(\aup / \adown)$ because both $\log(\aup)$ and $\log(1/\adown)$ are positive and $\inds\inddown + \indl\indup \leq 1$.
Because the first term of \eqref{eq:v_decompose} is non-positive, we have
\begin{align}
V(\theta_{t+1}) - V(\theta_t)
\leq& \left(1-\frac{v}{2}\right)\cdot\log\left(\frac{f(m_{t+1})}{f(m_t)}\right)
+v\cdot\log\left(\frac{\aup}{\adown}\right)
\label{eq:hold_above_in_general_case}
.
\end{align}
This completes the proof of \eqref{eq:general_potential_decrease}.

\paragraph{Proof of \eqref{eq:constant_lower_bound_of_V} for any $\theta\in\Theta\setminus\{\theta\mid m= x^*\}$}
%\yohe{I feel that I can simplify this part significantly. Let me try it below. The result becomes even tighter.}
Note that $\log^+(a \cdot b) \leq \log^+(a) + \log^+(b)$. This implies that $\log^+(b) - \log^+(a \cdot b) \geq - \log^+(a) \geq -\abs{\log(a)}$. Now, let $a = \frac{\sigma_{t+1} \sqrt{ f(m_{t})}}{ \sigma_{t} \sqrt{f(m_{t+1})} }$ and $b = \frac{s\sqrt{ Lf(m_{t+1})}}{\sigma_{t+1}\cdot E_\mathcal{Q} }$. Then, the above inequality implies 
\begin{equation}
\begin{split}
\MoveEqLeft[2]\log^+\left( \frac{s\sqrt{ Lf(m_{t+1})}}{\sigma_{t+1}\cdot E_\mathcal{Q} } \right) - \log^+\left( \frac{s\sqrt{ Lf(m_{t})}}{\sigma_{t}\cdot E_\mathcal{Q} } \right) 
\\
&\geq - \abs*{\log\left( \frac{\sigma_{t+1} \sqrt{ f(m_{t})}}{ \sigma_{t} \sqrt{f(m_{t+1})} } \right)}
\\
&\geq - \abs*{\log\left(\frac{\sigma_{t+1}}{\sigma_t}\right)} - \frac12 \abs*{\log\left( \frac{ f(m_{t+1}) } { f(m_{t}) }\right)}
\\
&> - \log\left(\frac{\aup}{\adown}\right) + \frac12 \log\left( \frac{ f(m_{t+1}) } { f(m_{t}) }\right) .
\end{split}
\end{equation}
Analogously, we have
\begin{multline}
\log^+\left( \frac{\sigma_{t+1} \cdot E_\mathcal{Q}}{\ell\sqrt{Lf(m_{t+1})}} \right) - \log^+\left( \frac{\sigma_t \cdot E_\mathcal{Q}}{\ell\sqrt{Lf(m_t)}} \right) 
\\
> - \log\left(\frac{\aup}{\adown}\right) + \frac12 \log\left( \frac{ f(m_{t+1}) } { f(m_{t}) }\right) .
\end{multline}
Therefore, we obtain
\begin{equation}
V(\theta_{t+1}) - V(\theta_t) > (1 + v) \cdot \log\left( \frac{ f(m_{t+1}) } { f(m_{t}) }\right) - 2 v \cdot \log\left(\frac{\aup}{\adown}\right) .
\end{equation}

\paragraph{Proof of \eqref{eq:small_sigma_potential_decrease} under $\sigma_t < \frac{s\sqrt{L f(m_t)}}{\aup E_\mathcal{Q}}$}

Under this condition, 
we have $\frac{s\sqrt{Lf(m_t)}}{\sigma_t \cdot E_\mathcal{Q}}>\aup>1$
and $\frac{\sigma_t \cdot E_\mathcal{Q}}{\ell\sqrt{Lf(m_t)}} < \frac{\aup \cdot \sigma_t \cdot E_\mathcal{Q}}{\ell\sqrt{Lf(m_t)}}<\frac{s}{\ell}<1$.
The latter implies that the last term of \eqref{eq:delta_l} is $0$.
Note that the sum of the first three terms of \eqref{eq:delta_l} is $v\cdot\log\left(\frac{\sigma_{t+1}\cdot E_\mathcal{Q}}{\ell\sqrt{ Lf(m_{t})}}\right)\cdot\indl$ and $\sigma_{t+1} \leq \aup \cdot \sigma_t$, implying that the sum of the first three terms of \eqref{eq:delta_l} is non-positive. That is, $\Delta_{\ell} \leq 0$. 
Because the first term of \eqref{eq:v_decompose} is always non-positive, we have $V(\theta_{t+1}) - V(\theta_t) \leq v \Delta_s$ and
\begin{equation}
\begin{aligned}[t]
&v \cdot\Delta_s
%\\
%=&
% v\cdot\log\left(\frac{s\sqrt{ Lf(m_{t})}}{E_\mathcal{Q} \aup\sigma_{t}}\right)\cdot\inds\indup
% + v\cdot\log\left(\frac{s\sqrt{ Lf(m_{t})}}{E_\mathcal{Q} \adown\sigma_{t}}\right)\cdot\inds\inddown
% - v \cdot \log \left(\frac{s\sqrt{Lf(m_t)}}{\sigma_t \cdot E_\mathcal{Q}}\right)
% \\
%v\cdot\log\left(\frac{\aup}{\adown}\right)\cdot\inds\inddown
%- v\cdot\log\left(\aup\right)\cdot\inds
%- v \cdot \log \left(\frac{s\sqrt{Lf(m_t)}}{\sigma_t \cdot E_\mathcal{Q}}\right) \cdot (1 - \inds)
\leq
v\cdot\log\left(\frac{\aup}{\adown}\right)\cdot\inds\inddown
- v\cdot\log\left(\aup\right)\cdot\inds
- v \cdot \log \left(\aup\right) \cdot (1 - \inds)
\\
&=
v\cdot\log\left(\frac{\aup}{\adown}\right)\cdot\inds\inddown
- v \cdot \log \left(\aup\right).
%- v \cdot \log \left(\aup\right) \cdot (1 - \inds)
%\\
%&=
%v\cdot\log\left(\frac{\aup}{\adown}\right)\cdot\inds\inddown
%- v \cdot \log \left(\aup\right).
\end{aligned}
\end{equation}
%where we used $\aup < \frac{s\sqrt{L f(m_t)}}{\sigma_t E_\mathcal{Q}}$.  
By using $\inds \leq 1$, we obtain \eqref{eq:small_sigma_potential_decrease}.
%   %
%   The first term of the RHS of the expression above is non-positive under the current condition $\sigma_t < \frac{\aup s\sqrt{L f(m_t)}}{E_\mathcal{Q}}$\yohe{shouldn't it be $\sigma_t < \frac{s\sqrt{L f(m_t)}}{\aup E_\mathcal{Q}}$?}.
%   Moreover, replacing $\inds$ with 1 in the second term provides the following upper bound:
%   %
%   \begin{align}
%     V(\theta_{t+1}) - V(\theta_t)
%     \leq
%     v\cdot\log\left(\frac{\aup}{\adown}\right)\cdot\inddown
%     -v\log(\aup)
%     .
%   \end{align}
%   This completes the proof of \eqref{eq:small_sigma_potential_decrease}.
%   %
%   \yohe{I have shorten the derivation as follows}.

\paragraph*{Proof of \eqref{eq:large_sigma_potential_decrease} under $\sigma_t > \frac{\ell \sqrt{Lf(m_t)}}{\adown  E_\mathcal{Q}}$}

Under this condition, the last term of \eqref{eq:delta_s} is $0$ as $s<\ell$ and $\adown < 1$.
Moreover, we have $\indl = \ind{\ell \sqrt{ L f(m_{t+1})} \leq E_\mathcal{Q} \cdot \sigma_{t+1}} = 1$ and hence $\inds = 0$ because $\sigma_{t+1} \geq \adown\sigma_{t}$ and $\sqrt{ f(m_{t+1})} \leq \sqrt{ L f(m_{t})}$. 
Therefore, the first three terms of \eqref{eq:delta_s} and the sum of the last two terms of \eqref{eq:delta_l} are all $0$. 
Inserting $\indl = 1$ and $\inds = 0$ into \eqref{eq:v_decompose} and \eqref{eq:delta_l}, we obtain
\begin{multline}
V(\theta_{t+1}) - V(\theta_t)
= \left(1-\frac{v}{2}\right)\cdot\log\left(\frac{f(m_{t+1})}{f(m_t)}\right) 
\\
+v\cdot\log\left(\frac{\aup}{\adown}\right)\cdot\indup
-v\cdot\log\left(\frac{1}{\adown}\right)
  ,
\end{multline}
where the first term is always non-positive.
This completes the proof of \eqref{eq:large_sigma_potential_decrease}.

\subsection{Proof of \Cref{cor:expected_potential_decrease}}\label{apdx:cor:expected_potential_decrease}
We divide $\Theta\setminus\{\theta\mid m= x^*\}$ into the following three cases:
 (i) $\sigma_t < \frac{s\sqrt{L f(m_t)}}{\aup E_\mathcal{Q}}$; 
(ii) $\sigma_t > \frac{\ell \norm{\nabla f(m_t)}}{\sqrt{2} \adown  E_z[\mathcal{Q}_z(\theta_t)]}$;
(iii) $\frac{s\sqrt{L f(m_t)}}{\aup E_\mathcal{Q}} \leq \sigma_t \leq \frac{\ell \norm{\nabla f(m_t)}}{\sqrt{2} \adown  E_z[\mathcal{Q}_z(\theta_t)]}$.

In the following, we use the fact $w > 0$. 
From \eqref{eq:w}, it is easy to see that $w > 0$ if $\sqrt{\frac{2}{\pi}}\cdot\kappa_{\inf} > B_{\sup}^\mathrm{low}(q^\mathrm{low})$,  
which follows from $q^\mathrm{low} \in I_q$ and the definition of $I_q$, \eqref{eq:def_Iq}.

\paragraph*{(i) $\sigma_t < \frac{s\sqrt{L f(m_t)}}{\aup E_\mathcal{Q}}$}
In light of \Cref{lemma:potential_decrease}, we have \eqref{eq:small_sigma_potential_decrease}.
Moreover, we have
\begin{multline}
\hspace{-0.8em}
\sigma_t < \frac{s\sqrt{L f(m_t)}}{\aup E_\mathcal{Q}}
= \frac{B_{\inf}^\mathrm{high}(q^\mathrm{high}) \cdot \sqrt{2 L f(m_t)}}{E_\mathcal{Q}}
\\
\leq \frac{B_{\inf}^\mathrm{high}(q^\mathrm{high}) \cdot \norm{\nabla f(m_t)}}{E_\mathcal{Q}}
\leq \frac{B_{\inf}^\mathrm{high}(q^\mathrm{high}) \cdot \norm{\nabla f(m_t)}}{\E[\mathcal{Q}_z(\theta)]}.
\end{multline}
Therefore, the condition of (v) in \Cref{lemma:cases} holds and we have
$\E[\inddown] = 1 - \mathrm{Pr}\left[f(m_t + \sigma_t z_t)\leq f(m_t) \mid \mathcal{F}_t\right] \leq 1 - q^\mathrm{high}$.
By taking the expectation on both sides of \eqref{eq:small_sigma_potential_decrease}, we obtain
\begin{equation}
\begin{split}
\MoveEqLeft[1]\E[ V(\theta_{t+1}) - V(\theta_t) \mid \mathcal{F}_t ]
\\
&\leq
v\cdot \log\left(\frac{\aup}{\adown}\right)\cdot \left( 1 - q^\text{high} - \frac{\log(\aup)}{\log(\aup / \adown)} \right)
\\
&=
v\cdot \log\left(\frac{\aup}{\adown}\right)\cdot \left( p_\mathrm{target} - q^\text{high}\right)
\\
&=
\min\left\{ \frac{w}{4} , \log\left(\frac{\aup}{\adown}\right)\right\}
\cdot \left( p_\mathrm{target} - q^\text{high}\right)
<0
\label{eq:small_expected_potential_decrease}
.
\end{split}
\end{equation}

\paragraph*{(ii) $\sigma_t > \frac{\ell \norm{\nabla f(m_t)}}{\sqrt{2}\adown  E_z[\mathcal{Q}_z(\theta_t)]}$}
This condition implies the condition of (ii) in \Cref{lemma:potential_decrease} as we have $\norm{\nabla f(m_t)} \geq \sqrt{2 L f(m_t)}$. 
Therefore, we have \eqref{eq:large_sigma_potential_decrease}. 
Moreover, we have
\begin{equation}
\bar\sigma_t > \ell / \sqrt{2}\adown
= B_{\sup}^\mathrm{low}(q^\mathrm{low}).
\end{equation}
%\begin{equation}
%\sigma_t > \frac{\ell \norm{\nabla f(m_t)}}{\sqrt{2}\adown  E_z[\mathcal{Q}_z(\theta_t)]}
%= \frac{B_{\sup}^\mathrm{low}(q^\mathrm{low}) \norm{\nabla f(m_t)}}{E_z[\mathcal{Q}_z(\theta_t)]}.
%\end{equation}
Therefore, the condition of (vi) in \Cref{lemma:cases} holds and we have
$\E[\indup] = \mathrm{Pr}\left[f(m_t + \sigma_t z_t)\leq f(m_t) \mid \mathcal{F}_t\right] \leq q^\mathrm{low}$.
By taking the expectation on both sides of \eqref{eq:large_sigma_potential_decrease}, we obtain
\begin{multline}
%\MoveEqLeft[2] 
\hspace{-0.8em}
\E[ V(\theta_{t+1}) - V(\theta_t) \mid \mathcal{F}_t ]
%\\
\leq v\cdot\log\left(\frac{\aup}{\adown}\right)\cdot \left( q^\mathrm{low} - \frac{\log(1/\adown)}{\log(\aup/\adown)}\right)
\\
= 
\min\left\{ \frac{w}{4} , \log\left(\frac{\aup}{\adown}\right)\right\}
\cdot \left( q^\mathrm{low} - p_\mathrm{target}\right)
<0
\label{eq:large_expected_potential_decrease}
  .
\end{multline}

\paragraph*{(iii) $\frac{s\sqrt{L f(m_t)}}{\aup E_\mathcal{Q}}\leq\sigma_t\leq\frac{\ell \norm{\nabla f(m_t)}}{\sqrt{2}\adown \E[\mathcal{Q}_z(\theta_t)]}$}
Under this condition, we have
$\frac{\sigma_t \E[\mathcal{Q}_z(\theta_t)\cdot\ind{z_e\leq 0}]}{\norm{\nabla f(m_t)}}\leq \frac{\ell}{\sqrt{2}\adown \kappa_{\inf}}$
and
$\frac{\sigma_t \norm{\nabla f(m_t)}}{f(m_t)}\geq \frac{\sigma_t \sqrt{2L}}{\sqrt{f(m_t)}}\geq \frac{s\sqrt{2} L}{\aup E_\mathcal{Q}}$.
In addition, because
\begin{equation}
\bar\sigma_t
\leq
\ell / \sqrt{2}\adown = B_{\sup}^\mathrm{low}(q^\mathrm{low}),
\end{equation}
%\begin{equation}
%\sigma_t\leq\frac{\ell \norm{\nabla f(m_t)}}{\sqrt{2}\adown \E[\mathcal{Q}_z(\theta_t)]} = \frac{B_{\sup}^\mathrm{low}(q^\mathrm{low}) \norm{\nabla f(m_t)}}{E_z[\mathcal{Q}_z(\theta_t)]},
%\end{equation}
the condition of (iv) in \Cref{lemma:cases} holds and we have $\Pr[f(m + \sigma z) \leq f(m) ]\geq Q$.
Then,
\begin{equation}
\begin{split}
\MoveEqLeft[1] 
\E\left[ \log\left(\frac{f(m_{t+1})}{f(m_t)}\right)  \mid \mathcal{F}_t \right]
\\
&\leq
\frac{\sigma_t \norm{\nabla f(m_t)}}{f(m_t)}\cdot\left(
\frac{\sigma_t \cdot \E\left[\mathcal{Q}_z(\theta_t)\cdot \ind{z_e\leq 0} \right] }{2\norm{\nabla f(m_t)}}
- \frac{1}{\sqrt{2\pi}}
\right)
\\
&\qquad \cdot \Pr_{z_t}[f(m_t + \sigma_t z_t) \leq f(m_t) ]
\\
&\leq
\frac{s\sqrt{2} L}{\aup E_\mathcal{Q}}
\cdot\left(
- \frac{1}{\sqrt{2\pi}}
+ \frac{\ell}{2\sqrt{2}\adown \kappa_{\inf}}
\right)
\cdot Q
% \\
% &=
= 
- w 
.
\end{split}
\end{equation}
Taking the expectation on both sides of \eqref{eq:general_potential_decrease}, we obtain
\begin{multline}
%\MoveEqLeft[2]
\hspace{-0.8em}
\E[ V(\theta_{t+1}) - V(\theta_t)\mid \mathcal{F}_t ]
\leq - \left(1-\frac{v}{2}\right) \cdot w + v\cdot\log\left(\frac{\aup}{\adown}\right)
\\
\leq - \frac{w}{2} + v\cdot\log\left(\frac{\aup}{\adown}\right)
= - \frac{w}{2} + \min \left\{ \frac{w}{4}, \log\left(\frac{\aup}{\adown}\right) \right\}
\leq - \frac{w}{4} .
\label{eq:reasonable_expected_potential_decrease}
\end{multline}

\paragraph*{Altogater}
Finally, by taking a maximum among \eqref{eq:small_expected_potential_decrease}, \eqref{eq:large_expected_potential_decrease} and \eqref{eq:reasonable_expected_potential_decrease}, we obtain $\E[ V(\theta_{t+1}) - V(\theta_t)\mid \mathcal{F}_t ]\leq -B$.
The infimum in $B$ is taken over $q^\mathrm{low}, q^\mathrm{high}\in I_q$ such that $q^\mathrm{low}<p_\mathrm{target}<q^\mathrm{high}$.
The positivity $B$ is guaranteed with $w>0$ and $q^\mathrm{low}<p_\mathrm{target}<q^\mathrm{high}$.
This completes the proof.

\subsection{Proof of \Cref{lemma:v_variance_bound}}\label{apdx:lemma:v_variance_bound}
%\begin{proof}

Note first that
%\begin{equation}
${\color{blue}
\Var[V(\theta_{t+1})]
\leq
\E[\left(V(\theta_{t+1}) - V(\theta_t)\right)^2]}$
%\end{equation}
%
holds. 
% In light of \eqref{eq:general_potential_decrease},
% %
% % \begin{align}
% % V(\theta_{t+1}) - V(\theta_t) \leq v \cdot \log\left(\frac{\aup}{\adown}\right)
% % \label{eq:constant_upper_bound_of_V}
% % .
% % \end{align}
% we have $V(\theta_{t+1}) - V(\theta_t) \leq v \cdot \log\left(\frac{\aup}{\adown}\right)$.
% Combining this inequailty and \eqref{eq:constant_lower_bound_of_V},
From \eqref{eq:general_potential_decrease} and \eqref{eq:constant_lower_bound_of_V}, 
we obtain
% \begin{equation}
% \begin{aligned}[t]
% \MoveEqLeft[2]\abs{V(\theta_{t+1}) - V(\theta_t)}
% \\
% % &\leq
% % \max
% % \Biggl\{
% % v \cdot \log\left(\frac{\aup}{\adown}\right),
% % \left(1+v\right)\cdot\left|\log\left(\frac{f(m_{t+1})}{f(m_t)}\right)\right|
% % +2v\cdot\log\left(\frac{\aup}{\adown}\right)
% % \Biggr\}
% % \\
% &\leq
% \left(1+v\right)\cdot\left|\log\left(\frac{f(m_{t+1})}{f(m_t)}\right)\right|
% +2v\cdot\log\left(\frac{\aup}{\adown}\right)
% .
% \end{aligned}
% \end{equation}
\begin{equation}
\abs{V(\theta_{t+1}) - V(\theta_t)}
\leq
\left(1+v\right)\cdot\left|\log\left(\frac{f(m_{t+1})}{f(m_t)}\right)\right|
+2v\cdot\log\left(\frac{\aup}{\adown}\right)
.
\end{equation}
As the only random variable in the RHS of the expression above is $\left|\log\left(\frac{f(m_{t+1})}{f(m_t)}\right)\right|$,
to prove ${\color{blue}\E[\left(V(\theta_{t+1}) - V(\theta_t)\right)^2]} < \infty$, it suffices to prove that
% \begin{align}
% \E\left[\left|\log\left(\frac{f(m_{t+1})}{f(m_t)}\right)\right|\mid\mathcal{F}_t\right]
% <\infty
% ,
% \label{eq:abs_qualitygain_integrable}
% \end{align}
% and
% \begin{align}
% \E\left[\left(\log\left(\frac{f(m_{t+1})}{f(m_t)}\right)\right)^2\mid\mathcal{F}_t\right]
% <\infty
% .
% \label{eq:squared_qualitygain_integrable}
% \end{align}
%\begin{align}
${\color{blue}\E\left[\abs*{\log\left(\frac{f(m_{t+1})}{f(m_t)}\right)}\right]}
<\infty$
%\quad \text{and} \quad 
and
${\color{blue}\E\left[\left(\log\left(\frac{f(m_{t+1})}{f(m_t)}\right)\right)^2\right]}
<\infty$.
%\label{eq:qualitygain_integrable}
%\end{align}
%
By exploiting the fact that $x \leq \exp(x) - 1$ and $x^2 \leq 2 (\exp(x) - 1)$,
the above inequalities are straightforward for $d>3$ with \Cref{lemma:variancebound}.
% Namely,
% \begin{align}
% \E\left[\left|\log\left(\frac{f(m_{t+1})}{f(m_t)}\right)\right|\mid\mathcal{F}_t\right]
% \leq
% \frac{U}{L}\cdot\left(1+\frac{1}{d-3}\right) -1
% <\infty
% ,
% \end{align}
% and
% \begin{align}
% \E\left[\left(\log\left(\frac{f(m_{t+1})}{f(m_t)}\right)\right)^2\mid\mathcal{F}_t\right]
% \leq
% 2\cdot\left(\frac{U}{L}\cdot\left(1+\frac{1}{d-3}\right) -1\right)
% <\infty
% .
% \end{align}
This completes the proof. 
%\end{proof}

%{\appendices
%\section*{Proof of the First Zonklar Equation}
%Appendix one text goes here.
% You can choose not to have a title for an appendix if you want by leaving the argument blank
%\section*{Proof of the Second Zonklar Equation}
%Appendix two text goes here.}

% \bibliographystyle{plain}
% \bibliography{algorithmica}
%

\section{Biography Section}
%If you have an EPS/PDF photo (graphicx package needed), extra braces are
% needed around the contents of the optional argument to biography to prevent
% the LaTeX parser from getting confused when it sees the complicated
% $\backslash${\tt{includegraphics}} command within an optional argument. (You can create
% your own custom macro containing the $\backslash${\tt{includegraphics}} command to make things
% simpler here.)
 
% \vspace{11pt}

%\bf{Daiki Morinaga}\vspace{-33pt}
\begin{IEEEbiography}[{\includegraphics[width=1in,height=1.25in,clip,keepaspectratio]{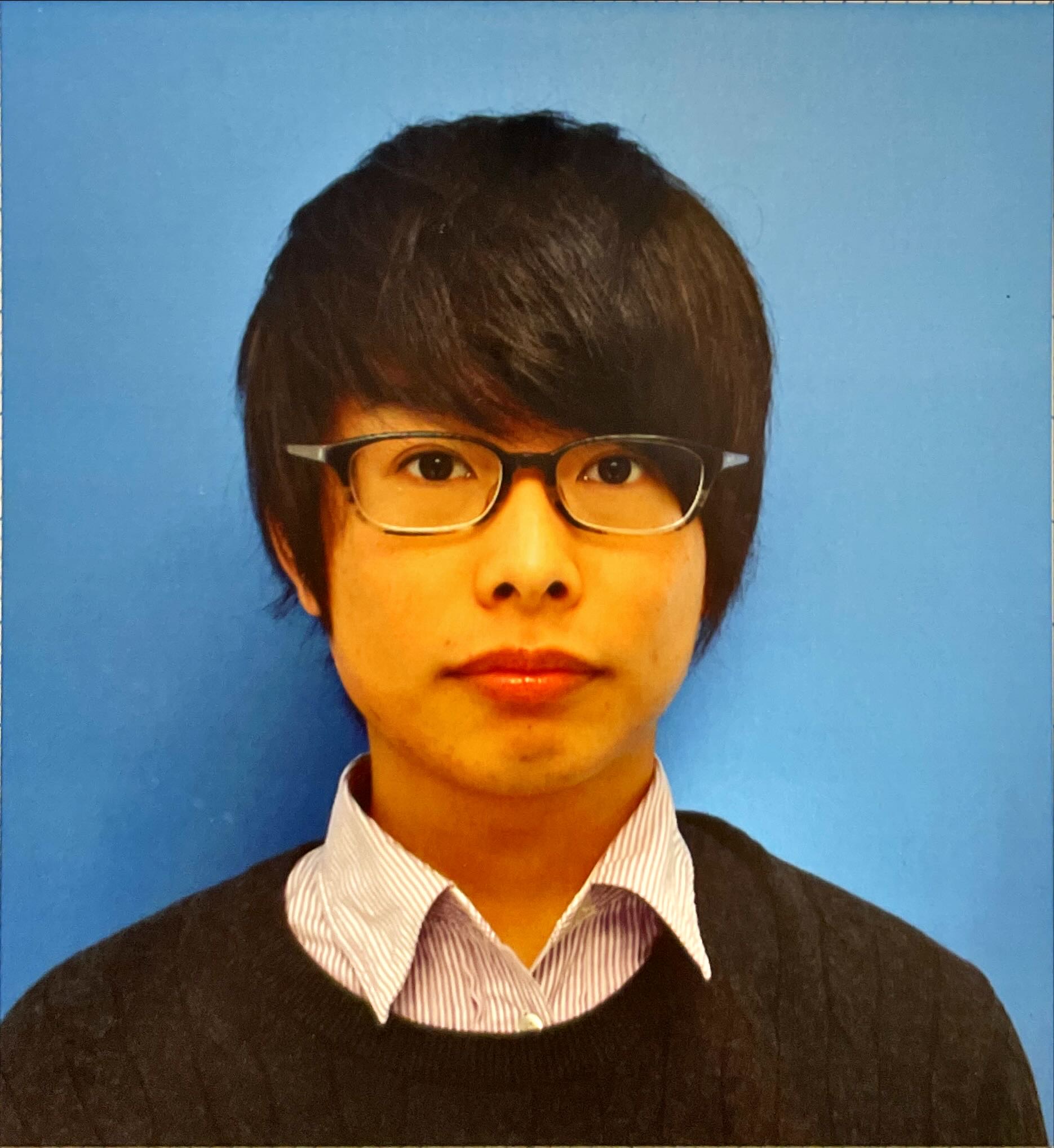}}]{Daiki Morinaga}
received the B.S. degree in information science and the M.S. degree in engineering from University of Tsukuba, Tsukuba, Japan, in 2020 and 2022, respectively.
He won the Best Paper Award at FOGA 2019.
His research interests include optimization techniques in engineering, such as black-box optimization and metaheuristics, and their theoretical analyses.
\end{IEEEbiography}

\begin{IEEEbiography}[{\includegraphics[width=1in,height=1.25in,clip,keepaspectratio]{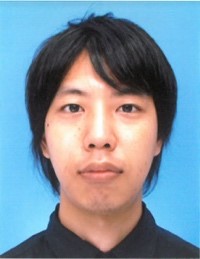}}]{Kazuto Fukuchi} received a Ph.D. degree from University of Tsukuba, Japan, in 2018. Since 2019, he has been an Assistant Professor with the Faculty of Engineering, Information and Systems, University of Tsukuba, Japan, and also a Visiting Researcher with the Center for Advanced Intelligence Project, RIKEN, Japan. His research interests include mathematical statistics, machine learning, and their application.
\end{IEEEbiography}

\begin{IEEEbiography}[{\includegraphics[width=1in,height=1.25in,clip,keepaspectratio]{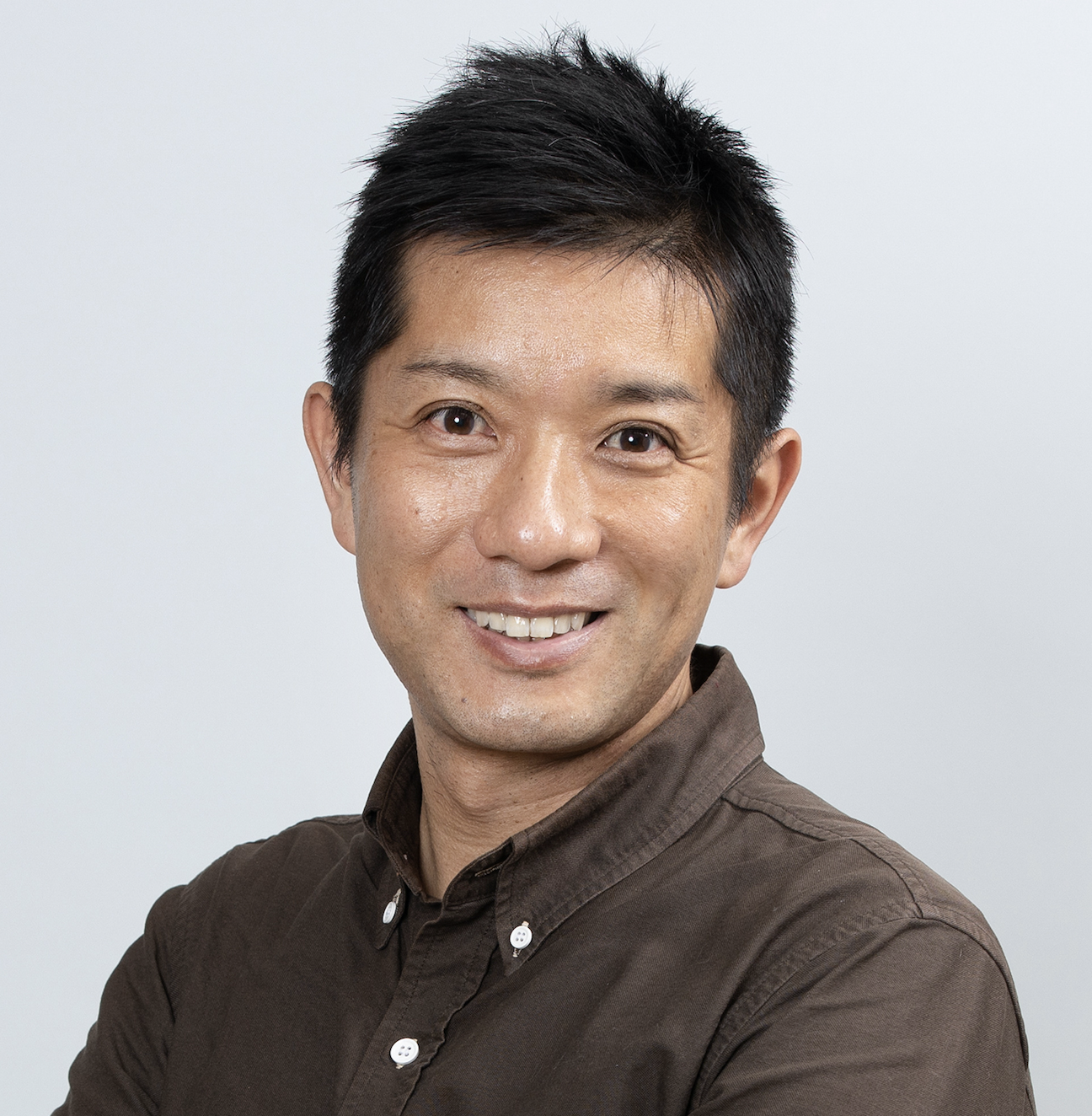}}]{Jun Sakuma} received the Ph.D. degree in engineering from Tokyo
Institute of Technology, Tokyo, Japan, in 2003. Since 2016, he has been a Professor
with the Department of Computer Science, School of Systems and
Information Engineering, University of Tsukuba, Japan. He has also been Team Leader of the Artificial Intelligence Security and Privacy team in the Center for Advanced Intelligence
Project, RIKEN since 2016. From 2009 to 2016, he was an Associate Professor with the Department of Computer Science, University of Tsukuba. From 2004 to 2009, he was an Assistant Professor with the Department of Computational Intelligence and Systems Science, Interdisciplinary Graduate School of Science and Engineering, Tokyo Institute of
Technology, Tokyo, Japan. From 2003 to 2004, he was a Researcher with
Tokyo Research Laboratory, IBM, Tokyo, Japan. His research
interests include data mining, machine learning, data privacy, and
security. He is a member of the Institute of Electronics, Information
and Communication Engineers of Japan (IEICE).
\end{IEEEbiography}

\begin{IEEEbiography}[{\includegraphics[width=1in,height=1.25in,clip,keepaspectratio]{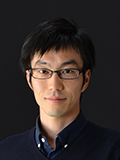}}]{Youhei Akimoto} received the B.S. degree in computer science in 2007, and the M.S. and Ph.D. degrees in computational intelligence and systems science from Tokyo Institute of Technology, Japan, in 2008 and 2011, respectively. From 2010 to 2011, he was a Research Fellow of JSPS in Japan, and from 2011 to 2013, he was a Post-Doctoral Research Fellow with INRIA in France. From 2013 to 2018, he was an Assistant Professor with Shinshu University, Japan. Since 2018, he has been an Associate Professor with University of Tsukuba, Japan as well as a Visiting Researcher with the Center for Advanced Intelligence Projects, RIKEN. 
%He served as a Track Chair for the continuous optimization track of GECCO in 2015 and 2016. 
%He is an Associate Editor of ACM TELO and is on the editorial board of the ECJ. He won the Best Paper Award at GECCO 2018 and FOGA 2019. 
His research interests include design principles, theoretical analyses, and applications of stochastic search heuristics.
\end{IEEEbiography}

% \bf{If you include a photo:}\vspace{-33pt}
% \begin{IEEEbiography}[{\includegraphics[width=1in,height=1.25in,clip,keepaspectratio]{fig1}}]{Michael Shell}
% Use $\backslash${\tt{begin\{IEEEbiography\}}} and then for the 1st argument use $\backslash${\tt{includegraphics}} to declare and link the author photo.
% Use the author name as the 3rd argument followed by the biography text.
% \end{IEEEbiography}

%\vspace{11pt}

%\bf{If you will not include a photo:}\vspace{-33pt}
%\begin{IEEEbiographynophoto}{John Doe}
%Use $\backslash${\tt{begin\{IEEEbiographynophoto\}}} and the author name as the argument followed by the biography text.
%\end{IEEEbiographynophoto}
\vfill

\end{document}